\tikzstyle{vertex}=[circle, fill, draw, inner sep=0pt, minimum size=6pt]
\newcommand{\vertex}{\node[vertex]}
\def\namedlabel#1#2{\begingroup
    #2%
    \def\@currentlabel{#2}%
    \phantomsection\label{#1}\endgroup
}
\newtheorem{theorem}{Theorem}[section]
\newtheorem{prop}[theorem]{Proposition}
\newtheorem{proposition}[theorem]{Proposition}
\newtheorem{corollary}[theorem]{Corollary}
\newtheorem{lemma}[theorem]{Lemma}
\newtheorem{conjecture}[theorem]{Conjecture}
\theoremstyle{definition}
\newtheorem{example}[theorem]{Example}
\newtheorem{definition}[theorem]{Definition}
\newtheorem{procedure}[theorem]{Procedure}
\newtheorem{remark}[theorem]{Remark}
\newtheorem{assumption}[theorem]{Assumption}
\newcommand{\bbR}{\mathbb{R}}
\newcommand{\calC}{{\mathcal{C}}}
\newcommand{\calD}{{\mathcal{D}}}
\newcommand{\calU}{{\mathcal{U}}}
\newcommand{\calV}{{\mathcal{V}}}
\newcommand{\calW}{{\mathcal{W}}}
\newcommand{\Lk}[2]{\text{Lk}_{#1}(#2)}
\newcommand{\MaxInt}[1]{\text{Max}_{\cap}(#1)}
\newcommand{\Man}[1]{\text{Man}(#1)}
\newcommand{\code}{\calC}
\newcommand{\realiz}{\calU}
\newcommand{\fullreal}{\realiz = \{U_i\}_{i=1}^n}
\newcommand{\scplex}{\Delta}
\newcommand{\scplexC}{\Delta(\code)}
\newcommand{\scplexCstar}{\Delta(\code^{\star})}
\newcommand{\scplexD}{\Delta(\calD)}
\newcommand{\scplexCres}{\Delta(\code)|_{\sigma \cup \tau}}
\newcommand{\scfaceS}{\sigma}
\newcommand{\scfaceT}{\tau}
\newcommand{\LkDelta}{\Lk{\scfaceS}{\scplex}}
\newcommand{\LkCfaceS}{\Lk{\scfaceS}{\scplexC}}
\newcommand{\LkCfaceT}{\Lk{\scfaceT}{\scplexC}}
\newcommand{\LkCres}{\Lk{\scfaceS}{\scplexCres}}
\newcommand{\MaxIntC}{\MaxInt{\scplexC}}
\newcommand{\ManC}{\Man{\scplexC}}
\newcommand{\mincode}{\code_{\rm min}(\scplex)}
\newcommand{\mincodestar}{\code_{\rm min}(\Delta(\code^*))}
\newcommand{\spokeOne}{\scfaceS_1}
\newcommand{\spokeTwo}{\scfaceS_2}
\newcommand{\spokeThree}{\scfaceS_3}
\newcommand{\spokei}{\scfaceS_i}
\newcommand{\spokej}{\scfaceS_j}
\newcommand{\spokek}{\scfaceS_k}
\newcommand{\spokeell}{\scfaceS_{\ell}}
\newcommand{\hub}{(\scfaceS_1 \cup \scfaceS_2 \cup \scfaceS_3)}
\newcommand{\rim}{\scfaceT}
\newcommand{\Wheel}{\calW}
\newcommand{\WheelFull}{\calW = (\spokeOne, \spokeTwo, \spokeThree, \rim)}
\newcommand{\constrOne}{\rho_1}
\newcommand{\constrThree}{\rho_3}
\newcommand{\constrj}{\rho_j}
\newcommand{\constrBad}{\rho}
\newcommand{\treelink}{\text{TL}}
\newcommand{\locobOne}{\scfaceS}
\newcommand{\locobTwo}{\scfaceT}
\newcommand{\tk}{\mathrm{Tk}}
\begin{document}

\title{%Toward a classification of convex codes on six neurons}
Wheels: A New Criterion for non-convexity of Neural Codes}
\author{Alexander Ruys de Perez, Laura Felicia Matusevich, and Anne Shiu}
\address{Department of Mathematics\\Texas A\&M University\\College Station, TX 77843, USA}
\date{\today}

\begin{abstract}
   We introduce new geometric and combinatorial criteria that preclude a neural code from being
   convex, and use them to tackle the classification problem for codes
   on six neurons.
   Along the way, we give the first example of a code that is non-convex, has no local obstructions, and has simplicial complex of dimension two.
   We  also characterize convexity for neural codes for which the simplicial complex is pure of low or high dimension.
   
\vspace{0.1 in}
\noindent
%{\bf MSC Codes:} 
%52A20, %Convex sets in n dimensions
%05C05, % Trees
%05E45, % Combinatorial aspects of simplicial complexes
%92C30 %Neural biology
\end{abstract}
\maketitle

%~~~~~~~~~~~~~~~~~~~~~~~~~~~~~~~~~~~~~~~~~~~~~~~
%BEGIN INTRODUCTION FROM OLD WHEELS PAPER
%~~~~~~~~~~~~~~~~~~~~~~~~~~~~~~~~~~~~~~~~~~~~~~~
\section{Introduction} \label{sec:intro}

A {\em neural code} on $n$ neurons is a subset $\code \subseteq 2^{[n]}$,
where $[n]=\{1,2,\dots,n\}$.  Of interest are neural codes that describe the regions of a configuration of convex open sets $U_1,U_2, \dots, U_n$ in Euclidean space.  Such {\em convex codes} arise from neurons in the hippocampus, specifically, {\em place cells}.  In this case, each region $U_i$ is called a {\em place field} and represents the subset of an animal's environment  where the corresponding neuron fires.  Convex codes, therefore, allow the brain to map out an environment.  Place cells were found by John O'Keefe in 1971~\cite{Oke1}, for which he was awarded a joint 
(with May-Britt Moser and Edvard Moser)
Nobel Prize in Physiology or Medicine in 2014.

The central open question in the mathematical theory of neural codes is to determine which codes are convex.  
%Mathematically, this entails characterizing which intersection patterns can be cut out as regions of a configuration of convex open sets in Euclidean space.  
Some families of codes are known to be convex~\cite{cruz2019open, what-makes, sparse, williams}, 
and convex codes on up to 5 neurons are well characterized~\cite{what-makes, curto2013neural, goldrup2020classification}.  
As for precluding convexity, the main tool is a combinatorial criterion known as a local obstruction~\cite{what-makes,no-go} and its generalizations~\cite{new-obs, Jeffs2018convex}.  

Possessing a local obstruction is sufficient, but not necessary, for a code to be non-convex~\cite{lienkaemper2017obstructions}.  
Accordingly, this article introduces a new tool for ruling out convexity.
We show that this obstruction -- which we call a wheel -- 
captures all non-convex codes (with no local obstructions) on up to 5 neurons (Theorem~\ref{thm:5-neurons-complete}).
%some, but not all, known codes that are non-convex despite having no local obstructions.
Additionally, we give combinatorial criteria for possessing a wheel, which we use to classify many codes on 6 neurons.  

In our classification, a crucial step is to eliminate codes that are essentially equivalent to codes on fewer neurons.  These include {\em decomposable} codes for which, at least intuitively, a realization is obtained by placing a realization of a smaller {\em embedded code} within an {\em ambient code}. The key result is that a decomposable code on up to $6$ neurons is convex if and only if its embedded and ambient codes are convex (Theorem~\ref{thm:decomposable}).  The proof relies on prior
classifications of codes on up to 5 neurons~\cite{what-makes, curto2013neural, goldrup2020classification} 
as well as results on {\em nondegenerate convexity} due to Cruz {\em et al.}~\cite{cruz2019open}.

%{\color{red} Mention here that we rely heavily on Cruz et al. and Goldrup and Phillipson?  } {\color{blue} Yes, I agree. I wonder if here we should have a paragraph in the Introduction dedicated to just discussing the problem of determining convexity on $5$ neurons. Mention Cruz et al., talk about reducible codes, decomposable codes, and Goldrup and Phillipson.}

We also prove results pertaining to a code and the dimension of its simplicial complex.  
We present a code that is non-convex, has no local obstructions, and has simplicial complex of dimension two (Proposition~\ref{prop:3-sparse-counterex}).  This example answers a question posed by Chen, Frick, and Shiu~\cite{new-obs}.
Finally, we characterize convexity for neural codes for which the simplicial complex is pure of low or high dimension (Theorem~\ref{thm:pure}).

The outline of this article is as follows.
We begin with background material on neural codes in Section~\ref{sec:preliminaries}. 
In Section~\ref{sec:wheel}, we introduce wheels, and then give 
combinatorial criteria for them in Section~\ref{sec:search}. Next, 
Sections~\ref{sec:search-2} and~\ref{sec:reduce-decompose} -- which the reader might wish to skip on a first reading -- contain results that allow us to search computationally for wheels, and
explain how codes can be decomposed into smaller codes.  These results are then used in Section~\ref{sec:6-neuron} to obtain a partial classification of codes on 6 neurons. Section~\ref{sec:pure} presents our results on codes with simplicial complexes that are pure, and we end with a discussion in Section~\ref{sec:discussion}.

\section{Preliminaries}
\label{sec:preliminaries}

In this section we introduce basic notions and review necessary background.

A \textit{code on $n$ neurons} is a subset $\code \subseteq 2^{[n]}$,
where $[n]=\{1,2,\dots,n\}$. Elements of a code $\code$ are called
\textit{codewords}, and a \textit{maximal codeword} is a codeword in $\code$ that is maximal with respect to inclusion. 

\begin{assumption} \label{assum:empty-codeword}
All codes in this article are assumed to contain the empty
codeword $\varnothing$.  Whether or not a code contains $\varnothing$ does not affect convexity, defined below~\cite[Remark 2.19]{new-obs}.
%NOTE: we assume $\varnothing\in \code$ for the proof of \ref{thm:pure}
\end{assumption}

A \textit{realization} of a code $\code$ in $\bbR^d$ is a collection
$\fullreal$ of open subsets of 
a \textit{stimulus space} $X \subseteq \mathbb{R}^d$
%$\bbR^d$ 
such that $c \in \code$ if and
only if 
 $(\cap_{i\in c}U_i) \smallsetminus (\cup_{j \in [n] \smallsetminus c}U_j) \neq \varnothing$.
The set $(\cap_{i\in c}U_i) \smallsetminus (\cup_{j \in [n] \smallsetminus c}U_j)$ is the 
\textit{atom of $c$ in the realization $\realiz$.} 
We denote  $U_{\scfaceS}:= \cap_{i\in \scfaceS}U_i$, 
and, by convention,  $U_{\varnothing}:=X$. 
%{\color{violet} If we use the notation $\atom^{\realiz}_{c}$, then we should mention it here.  Will double-check!}
If the sets $U_1,U_2, \dots,U_n$ are convex, then $\realiz$ is a
\textit{convex realization} of $\code$. Codes that possess convex
realizations are known as \textit{convex codes}.  As we are assuming that all codes contain $\varnothing$, we can always take $X=\mathbb{R}^d$ as the stimulus space; cf.~\cite[Remark 2.19]{new-obs}.

%\textcolor{blue}{Here place remark that $\varnothing$ has no effect on
%being convex, which is why we assume that all of our codes contain $\varnothing$.}
% I PLACED THIS REMARK ABOVE...

In what follows, we write a codeword without the braces, e.g., ``123'' rather than 
``$\{1,2,3\}$''.  We also indicate maximal codewords by boldface.

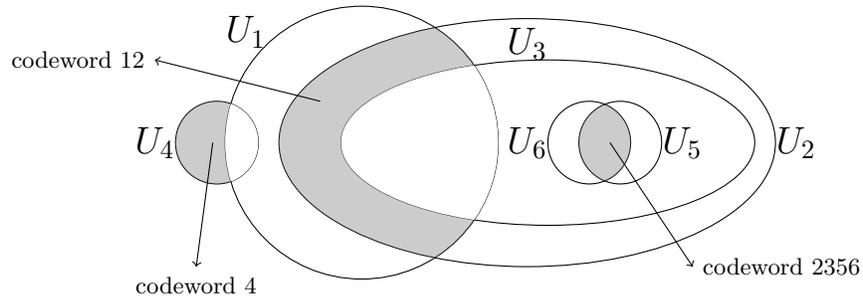
\begin{figure}[ht]
\begin{center}
\begin{tikzpicture}[scale=0.55]

\def\UOne{(0,0) circle (3.3cm)}
\def\UTwo {(4,0) ellipse (6cm and 3cm)}
\def\UThree {(4.5,0) ellipse (5cm and 2 cm)}
\def\UFour {(-3.5,0) circle (1cm)}
\def\UFive {(5.5,0) circle (1cm)}
\def\USix {(6.25,0) circle (1cm)}

\tikzset{invclip/.style={clip,insert path={{[reset cm]
        (-16383.99999pt,-16383.99999pt) rectangle (16383.99999pt,16383.99999pt)}}}}

%\filldraw[fill = blue,fill opacity = .5]\UOne; 
%\filldraw[fill = red,fill opacity = .5]\UTwo;
%\filldraw[fill = black,fill opacity = .5]\UThree;
%\filldraw[fill =green,fill opacity = .5]\UFour;
%\filldraw[fill =magenta,fill opacity = .5]\UFive;
%\filldraw[fill =cyan,fill opacity = .5]\USix;
\draw\UOne; 
\draw\UTwo;
\draw\UThree;
\draw\UFour;
\draw\UFive;
\draw\USix;
\draw (-2.8,2.687) node {\Large$U_1$};
\draw (10.5,0) node {\Large$U_2$};
\draw (4,2.4) node {\Large$U_3$};
\draw (-5,0) node {\Large$U_4$};
\draw (7.75,0) node {\Large$U_5$};
\draw (4,0) node {\Large$U_6$};

\begin{scope}[fill opacity=.4] 
\clip \USix;
\fill[gray] \UFive; 
\end{scope}  

\begin{scope}[fill opacity = .4]
\fill[gray] \UFour;
\end{scope}

\begin{scope}[fill opacity = 1]
\clip \UOne;
\fill[white]\UFour;
\end{scope}

\begin{scope}[fill opacity = .4]
\clip \UTwo;
\fill[gray] \UOne;
\end{scope}

\begin{scope}[fill opacity = 1]
\clip \UThree;
\fill[white] \UOne;
\end{scope}

\draw[->](-3.6,0) -- (-4,-3) node[anchor=north] {\footnotesize codeword 4} ;

\draw[->](-1,1) -- (-5,2) node[anchor=east] {\footnotesize codeword 12} ; 

\draw[->](6,0) -- (8,-3) node[anchor=west] {\footnotesize codeword 2356} ; 
\end{tikzpicture}

\caption{
Displayed is a realization of the code in Example~\ref{ex:code-decomposable}; each set $U_i$ is the open region inside %enclosed by 
the labeled ellipse. The atoms of three codewords are shaded.} \label{fig:real-6neur}
\end{center}
\end{figure}

\begin{example} \label{ex:code-decomposable}
The code 
$\code = \{ {\bf 2356, 123, 14}, 235, 236, 12, 23, 1,2,4,\varnothing\}$ on $6$ neurons is convex, as shown by the convex realization depicted in Figure~\ref{fig:real-6neur}.
\end{example}

%It will be convenient to use the following notion of trunks due to Jeffs~\cite{jeffs2020morphisms}.
Given a code $\code$ on $n$ neurons and a subset $\scfaceS \subseteq [n]$, the \textit{trunk} of $\scfaceS$ in $\code$, denoted by $\tk_{\code}(\scfaceS)$, is the set of all codewords containing $\scfaceS$:
    \[
    \tk_{\code}(\scfaceS):=\{c\in \code\ |\ \scfaceS \subseteq c\}~.
    \]
Next, every code $\code$ on $n$ neurons gives rise to a simplicial complex on
$[n]$, called the \textit{neural complex} of $\code$ (or the \textit{simplicial complex of $\code$}), and denoted by $\scplexC$, as follows:
\[
  \scplexC := \{ \scfaceS \in 2^{[n]} \mid \scfaceS \subseteq c \text{ for
    some } c \in \code \}.
\]
Note that the facets of $\scplexC$ are the maximal codewords of
$\code$. % with respect to inclusion. 
If $\scfaceS$ is a face of a simplicial complex~$\scplex$,
%$\scplexC$, 
we write $\dim(\scfaceS):= |\scfaceS |-1$, and
$\dim(\scplex):= \max
\{\dim (\scfaceS)\ |\ \scfaceS \in \scplex\}$, where $\dim$ stands for
\textit{dimension}. 

We remark that if $\realiz$ is any realization of a code $\code$, then
$\scplexC$ is the \textit{nerve} of $\realiz$.
%, {\color{violet} when considered as a cover by open sets.} {\color{red} consider re-wording?} 
By definition, this means that, for $\scfaceS \in 2^{[n]}$, 
\[
  \scfaceS \in \scplexC \quad \text{ if and only if } \quad \bigcap_{i\in
    \scfaceS} U_i\neq \varnothing.
\]
As $\scplexC$ is simultaneously the nerve of all realizations of
$\code$, it captures important topological and geometric
characteristics of $\code$. 

It has been a fruitful approach to explore convexity of neural codes
based on a study of the associated neural complexes. The key notion is
that of a \textit{local obstruction}, which is given below. Codes with local obstructions
are known to be \textit{not} convex
(Proposition~\ref{prop:summary-prior-results}).
%(Lemma~\ref{lemma:localObstruction}).
Local obstructions have been generalized recently~\cite{new-obs, Jeffs2018convex}, but for simplicity we do not present these generalizations here.

\begin{definition}
  \label{def:localObstruction}
Let $\code$ be a code on $n$ neurons, and assume that $\code$ 
is realized by a collection $\fullreal$ of open subsets of some $\mathbb{R}^d$.  A \textit{local obstruction} of $\code$ is a pair $(\locobOne, \locobTwo)$
of nonempty, disjoint subsets of $[n]$ such that 
 $$U_{\locobOne} ~\subseteq ~ \bigcup_{j\in \locobTwo}U_j$$
and the link $\LkCres$ is not contractible.
%Let $\code$ be a code on $n$ neurons, and $\locobOne, \locobTwo \in
%2^{[n]}$ be disjoint sets. The pair $(\locobOne, \locobTwo)$ is a
%\textit{receptive field relationship} if in every realization
%$\fullreal$ of $\code$,
%\[
%  U_{\locobOne}\subseteq \bigcup_{j\in \locobTwo}U_j.
%\]
%A \textit{local obstruction} of $\code$ is a receptive field
%relationship $(\locobOne, \locobTwo)$ such that $\locobTwo\neq
%\varnothing$ and $\LkCres$ is not contractible.
\end{definition}

Links are usually written as ${\rm Lk}_{\scplex}(\scfaceS)$, rather than $\LkDelta$; however, we follow the notation of~\cite{what-makes}.

Checking for local obstructions directly from the definition implies
analyzing not just $\scplexC$ but its restriction to every subset of
the vertices. However, obstructions can
always be detected at the level of $\scplexC$, a phenomenon we call
\textit{bubbling up} (see Proposition~\ref{prop:summary-prior-results}(\ref{itm:second}) below).  
% ``Bubbling up'' is a term we invent
We first 
%To simplify the search for local obstructions, we
introduce some necessary terminology.

\begin{definition} Let $\scplex$ be a simplicial complex on $[n]$, and let
  $\scfaceS \in \scplex$. 
  \begin{enumerate}[label=(\roman{*}), ref=(\roman{*})]
    \item $\scfaceS$ is a \textit{max-intersection face} of
      $\scplex$ if it is the intersection of two or more facets of $\scplex$.
      \item $\scfaceS$ is a \textit{mandatory face}
        of $\scplex$ if
        $\scfaceS \neq \varnothing$ and 
        the link $\LkDelta$ is not contractible.
      \end{enumerate}
Let $\code$ be a code on $n$ neurons.
  \begin{enumerate}[label=(\roman{*}), ref=(\roman{*})]    
      \item A \textit{max-intersection face} of  $\code$ is a
        max-intersection face of $\scplexC$. We denote by $\MaxIntC$
        the collection of all such faces.  The code $\code$
        is \textit{max-intersection-complete} if  $\MaxIntC \subseteq \code$.
       \item  A \textit{mandatory face} of $\code$ is a mandatory face
         of $\scplexC$. We denote by $\ManC$ the collection of all such
         faces.
\end{enumerate}
\end{definition}

The following result summarizes several prior results on convexity (see \cite[Theorem 1.2]{cruz2019open}, \cite[Lemma~1.4, Theorem 1.3, Proposition 1.7, Supplementary-Information Lemma 6.2]{what-makes}, and \cite[Theorem 5]{no-go}). 
% \cite[Lemma 1.2]{what-makes} superseded by Giusti & Itskov
% SUMMARY PROPOSITION
\begin{prop} \label{prop:summary-prior-results}
 Let $\code$ be a code on $n$ neurons.
 \begin{enumerate}
     \item \label{itm:first}
     Every mandatory face of $\code$ is the intersection of two or more maximal codewords of $\mathcal{C}$ (that is, $\ManC \subseteq \MaxIntC$).
     \item \label{itm:second}
     $\code$ is max-intersection-complete (i.e., $\MaxIntC \subseteq \code$) $\Rightarrow$ $\code$ is convex $\Rightarrow$ $\code$ has no local obstructions $\Leftrightarrow$ $\code$ contains all mandatory faces of $\code$ (i.e., $\ManC \subseteq \code$).
     \item \label{itm:third}
    If $n \leq 4$, then both implications in (\ref{itm:second}) are equivalences.  
 \end{enumerate}
 \end{prop}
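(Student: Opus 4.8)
The plan is to assemble the three parts from the results cited in the statement, supplying only the short arguments that glue them together. For part~\ref{itm:first} I would argue the contrapositive: if $\scfaceS \in \scplexC$ is nonempty and is \emph{not} a max-intersection face, then $\LkCfaceS$ is contractible, so $\scfaceS \notin \ManC$. Let $F_1, \dots, F_k$ (with $k \geq 1$) be the facets of $\scplexC$ that contain $\scfaceS$. If $k = 1$, then every face of $\scplexC$ containing $\scfaceS$ lies in $F_1$, so $\LkCfaceS$ is a simplex, hence contractible. If $k \geq 2$, then $F_1 \cap \cdots \cap F_k \supsetneq \scfaceS$ because $\scfaceS$ is not a max-intersection face; choosing $v \in (F_1 \cap \cdots \cap F_k) \smallsetminus \scfaceS$, we get $\rho \cup \{v\} \in \LkCfaceS$ for every $\rho \in \LkCfaceS$, so $\LkCfaceS$ is a cone with apex $v$, again contractible. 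This recovers \cite[Lemma~1.4]{what-makes}.

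For part~\ref{itm:second}, each arrow is essentially a citation, with a short argument attached. ``Max-intersection-complete $\Rightarrow$ convex'' is \cite[Theorem~1.2]{cruz2019open}. For ``convex $\Rightarrow$ no local obstructions,'' given a convex realization $\fullreal$ and disjoint nonempty $\scfaceS, \scfaceT \subseteq [n]$ with $U_{\scfaceS} \subseteq \bigcup_{j \in \scfaceT} U_j$, the family $\{\,U_{\scfaceS} \cap U_j \mid j \in \scfaceT\,\}$ is a finite cover of the convex (hence contractible) open set $U_{\scfaceS}$ by open convex sets; by the Nerve Lemma its nerve, which one checks equals $\LkCres$, is contractible, so $(\scfaceS, \scfaceT)$ is not a local obstruction. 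This is \cite[Theorem~5]{no-go}. Finally, ``no local obstructions $\Leftrightarrow \ManC \subseteq \code$'' is the ``bubbling up'' statement, combining \cite[Proposition~1.7]{what-makes} with \cite[Supplementary-Information Lemma~6.2]{what-makes}: the implication $\Leftarrow$ is immediate, because a mandatory face $\scfaceS$ missing from $\code$ has empty atom in every realization, that is, $U_{\scfaceS} \subseteq \bigcup_{j \in [n] \smallsetminus \scfaceS} U_j$, so (using $\scfaceS \neq [n]$, since the full simplex has contractible link) the pair $(\scfaceS, [n] \smallsetminus \scfaceS)$ is a local obstruction, its link being $\LkCfaceS$; the implication $\Rightarrow$, which does require work, extracts from a local obstruction $(\scfaceS, \scfaceT)$ and the non-contractibility of $\LkCres$ a mandatory face of $\scplexC$ not lying in $\code$.

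For part~\ref{itm:third}, both converses hold when $n \leq 4$ by the classification of codes on at most four neurons, \cite[Theorem~1.3]{what-makes}: there one shows that a code on $\leq 4$ neurons with no local obstructions is convex and that a convex code on $\leq 4$ neurons is max-intersection-complete; equivalently, $\MaxIntC = \ManC$ for simplicial complexes on at most four vertices, so the chain in part~\ref{itm:second} collapses to a string of equivalences. The main obstacle in the whole argument is the ``$\Rightarrow$'' half of the ``bubbling up'' equivalence in part~\ref{itm:second}: one must verify that the notion of local obstruction, defined a priori through a (restricted) neural complex, is faithfully captured by the realization-free condition $\ManC \subseteq \code$. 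I would handle this by following the argument of \cite{what-makes} closely; everything else is a direct appeal to prior work.
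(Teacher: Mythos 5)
Your reconstruction is correct and takes the same approach as the paper, which establishes this proposition purely by citation; you have simply supplied the short gluing arguments those citations compress. Two slips are worth flagging. You have swapped the directions of the equivalence ``no local obstructions $\Leftrightarrow \ManC \subseteq \code$'': the immediate argument you label ``$\Leftarrow$'' (a missing mandatory face $\scfaceS$ yields the obstruction $(\scfaceS,[n]\smallsetminus\scfaceS)$) is the contrapositive of ``no local obstructions $\Rightarrow \ManC \subseteq \code$,'' which is the forward implication, while the harder bubbling-up argument you label ``$\Rightarrow$'' is the contrapositive of the converse ``$\ManC \subseteq \code \Rightarrow$ no local obstructions.'' Also, the parenthetical ``$\scfaceS\neq[n]$, since the full simplex has contractible link'' is not right: $\mathrm{Lk}_{[n]}(\scplexC)=\{\varnothing\}$ has empty geometric realization, which is not contractible. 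The correct reason is simpler and needs no contractibility: if $[n]\in\scplexC$ then $[n]$ is a maximal codeword and hence an element of $\code$, so $[n]$ can never be a \emph{missing} mandatory face. The same observation shores up the $k=1$ subcase of your contrapositive argument for part~(\ref{itm:first}) when $\scfaceS$ is itself a facet: there $\LkCfaceS=\{\varnothing\}$ is again not a contractible simplex, and one must appeal to the convention (implicit in \cite{what-makes}) that facets are excluded from $\ManC$, consistent with the fact that a facet is never a max-intersection face.
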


It follows that the \textit{minimal code} of a simplicial complex $\scplex$, defined as follows:
\begin{align} \label{eq:min-code}
 \mincode
 ~:=~
 \{\textrm{facets of $\scplex$}\} \cup \{\textrm{mandatory faces of $\scplex$} \} \cup \{\varnothing \}~,
\end{align}
is the unique minimal (with respect to inclusion) code among all codes with neural complex $\scplex$ and no local obstructions.

But what of those codes that contain the mandatory faces but not all
the max-intersection faces? The next examples show that such
codes may or may not be convex.  Indeed, we see that two implications in Proposition~\ref{prop:summary-prior-results}\eqref{itm:second} 
are, in general, \textit{not} equivalences.

\begin{example}[A code that is convex but not max-intersection-complete] \label{ex:convex-notmaxint}
% Switched to version in "What makes..."
The following neural code 
is not max-intersection-complete 
($1=123 \cap 134 \cap 145$ is missing):
\[
\code = \{
{\bf 123}, {\bf 134}, {\bf 145}, 13, 14, 
\varnothing\}~.
\]
However, $\code$ is convex, 
as shown in~\cite[Figure 3c]{what-makes}.
\end{example}

\begin{example}[A non-convex code with no local obstructions]
\label{ex:lienshiu}
The 
following neural code was the first example of a 
non-convex code with no local obstructions~\cite[Theorem 3.1]{lienkaemper2017obstructions}:
\[
\code^{\star} = \{ {\bf 2345}, {\bf 123}, {\bf 134}, {\bf 145}, 13, 14, 23, 34, 45, 3, 4, \varnothing \}~.
\]
To see that $\code^{\star}$ indeed has no local obstructions, first observe 
%that the maximal codewords 
%(and thus the facets of $\scplex(\code_0)$ 
%are $2345, 123, 134,\text{ and } 145$. Their 
that the intersections  of the maximal codewords are
$23$, $34$, $45$, $13$, $14$, $1$, $3$, $4$, and $\varnothing$, each of which -- except $1$ -- is a codeword of $\code^{\star}$. However, $1$ is not mandatory, 
as the neural complex has facets $2345, ~ 123, ~ 134,$ and $145$,
and so the link of $1$ in $\scplexCstar$ is the following path graph, which is contractible:
\begin{tikzpicture}[scale=0.6]
        \vertex[label=$2$](p1) at (0,0) {};
        \vertex[label=$3$](p2) at (1,0) {};
        \vertex[label=$4$](p3) at (2,0) {};
        \vertex[label=$5$](p4) at (3,0) {};
        \path [-] (p1) edge node[above] {} (p2);
        \path [-](p2) edge node[above] {} (p3);
        \path [-](p3) edge node[above] {} (p4);
\end{tikzpicture}
\end{example}

\section{Wheels} \label{sec:wheel}

The following definition abstracts the type of non-convexity appearing
in the code in Example~\ref{ex:lienshiu}. 

\begin{definition} \label{def:wheel}
Let $\code$ be a code on $n$ neurons, and let $\fullreal$ be a
realization of $\code$. A tuple
$\Wheel=(\spokeOne,\spokeTwo,\spokeThree,\rim)\in (2^{[n]})^4$ is a
\textit{wheel of the realization $\realiz$} if it satisfies:
\begin{description}
    \item[\namedlabel{itm:Wi}{W(i)}] $U_{\spokej}\cap U_{\spokek} =  U_{\spokeOne}\cap U_{\spokeTwo}\cap U_{\spokeThree} \neq \varnothing$ for all $1\leq j < k \leq 3$, 
    \item[\namedlabel{itm:Wii}{W(ii)}] $U_{\spokeOne}\cap
      U_{\spokeTwo}\cap U_{\spokeThree}\cap U_{\rim} = \varnothing$,
    \item[\namedlabel{itm:Wiv}{W(iii)}]
    if $U_{\rim}$ and 
    $ U_{\spokej} \cap U_{\rim} $ 
    are convex for $j=1,2,3$,
    then 
    there exists a line segment whose endpoints lie one in
    $U_{\spokeOne}\cap U_{\rim}$
    and the other in $U_{\spokeThree}\cap U_{\rim}$, that also meets
    $U_{\spokeTwo}\cap U_{\rim}$.
 \end{description}

The sets $\spokeOne$, $\spokeTwo$, and $\spokeThree$ are the
\textit{spokes} %, 
and $\rim$ is the \textit{rim}. 
Also, $\Wheel$ is a \textit{wheel of the code $\code$} if it is a
wheel of \uline{every} realization of $\code$. 

We remark that~\ref{itm:Wiv} is related to but does not imply 
(see Remark~\ref{rmk:extra-condition-not-in-def}) 
the following condition:  
  \begin{description}
     \item[\namedlabel{itm:Wiii}{W(iii)\textsubscript{$\circ$}}]
       $U_{\spokej}\cap U_{\rim} \neq \varnothing$ for $j = 1,2,3$.
    \end{description}

For brevity, we say that $\realiz$ (or $\code $) {\em has a wheel} if there exists a wheel of $\realiz$ (respectively, $\code$).
\end{definition}

%\begin{remark}[Realizations with wheels that violate~\ref{itm:Wiii} are automatically non-convex] 
\begin{remark}[Condition~\ref{itm:Wiii}] 
\label{rmk:extra-condition-not-in-def}
Some wheels do not satisfy~\ref{itm:Wiii} (see Example~\ref{ex:wheel-frame-trivial} below).
%is not required for a wheel (e.g., the ``wheel frames'' introduced in the next section generate wheels that sometimes do not satisfy~\ref{itm:Wiii}).  
% Ex: C=13, 14, 34, 1,3, emptyset
% wheel frame: (1,3,4)
However, requiring this condition -- which, unlike condition~\ref{itm:Wiv}, is easy to check -- 
often makes it easier to work with wheels (see the next section).
%Nevertheless, it is often convenient to consider wheels satisfying this extra condition (see the next section).  
%Indeed, a realization (of some code) with a wheel that violates~\ref{itm:Wiii} must satisfy~\ref{itm:Wiv} only vacuously and so is non-convex.
\end{remark}

\begin{remark}  \label{rmk:role-3-spokes}
In Definition~\ref{def:wheel}, the spoke regions $U_{\spokeOne}$ and
$U_{\spokeThree}$ seem to play a different role than 
$U_{\spokeTwo}$, but this is not really the case. A more symmetric way
of stating~\ref{itm:Wiv} would be to ask for a line segment that intersects
all three sets $U_{\spokej}\cap U_{\rim}$, for $j=1,2,3$. We have adopted
the current numbering convention to simplify the writing in some of
the proofs below.
\end{remark}

In the next section, we show that 
every realization of  
the code 
$\code ^{\star}$ from Example~\ref{ex:lienshiu}
has a wheel (see Proposition~\ref{prop:original-counterexample-wheel-frame}).

Wheels are relevant because they forbid convexity:

\begin{theorem} \label{thm:wheel}
Let %$\code$ be a code and 
$\realiz$ be a realization of a neural code $\code$. If
$\realiz$ has a wheel, then $\realiz$ is not a convex
realization. Consequently, if $\code$ has a wheel, then 
$\code$ is non-convex. %not a convex code. 
%If a code $\code$ has a wheel,
%then $\code$ is non-convex.
%Let $\code$ be a code that has a wheel $\WheelFull$. Then $\code$ is non-convex.
\end{theorem}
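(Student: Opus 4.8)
The plan is to argue by contradiction: suppose $\realiz = \realsets$ is a convex realization of $\code$ that has a wheel $\Wheel = (\spokeOne, \spokeTwo, \spokeThree, \rim)$. Since each $U_i$ is convex, every intersection $U_{\scfaceS}$ is convex, so in particular $U_{\rim}$ and $U_{\spokej}\cap U_{\rim}$ are convex for $j=1,2,3$; hence the hypothesis of condition~\ref{itm:Wiv} is met, and we obtain a line segment $L$ with one endpoint $p \in U_{\spokeOne}\cap U_{\rim}$, the other endpoint $q \in U_{\spokeThree}\cap U_{\rim}$, and a point $r \in L \cap U_{\spokeTwo}\cap U_{\rim}$. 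The goal is to produce from $p$, $q$, $r$ a point lying in $U_{\spokeOne}\cap U_{\spokeTwo}\cap U_{\spokeThree}\cap U_{\rim}$, contradicting~\ref{itm:Wii}.

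The key geometric step is this. By~\ref{itm:Wi}, $U_{\spokej}\cap U_{\spokek} = U_{\spokeOne}\cap U_{\spokeTwo}\cap U_{\spokeThree}$ for all pairs $j<k$; write $Z := U_{\spokeOne}\cap U_{\spokeTwo}\cap U_{\spokeThree}$, which is convex and, by~\ref{itm:Wi}, nonempty. First I would observe that $U_{\spokeOne}\cap U_{\spokeThree} = Z \subseteq U_{\spokeTwo}$. Now consider the endpoints $p \in U_{\spokeOne}$ and $q \in U_{\spokeThree}$ of the segment $L$. I want to show the segment $L = [p,q]$ actually meets $Z$. The idea is to use the point $r \in U_{\spokeTwo}\cap U_{\rim}$ together with convexity of the relevant sets: the portion of $L$ from $p$ to $r$ lies in the convex hull of $\{p\} \cup \{r\}$, and I would like to argue that somewhere along $L$ we are simultaneously in $U_{\spokeOne}$, in $U_{\spokeThree}$, and in $U_{\spokeTwo}$. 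Concretely, let $a$ be the last point of $[p,q]$ (going from $p$ toward $q$) that lies in $\overline{U_{\spokeOne}}$ and let $b$ be the first point of $[p,q]$ lying in $\overline{U_{\spokeThree}}$; since $U_{\spokeOne}, U_{\spokeThree}$ are convex and $L$ starts in $U_{\spokeOne}$ and ends in $U_{\spokeThree}$, the sub-segments $[p,a]$ and $[b,q]$ are the traces of $L$ in these (closures of) convex sets. If these traces overlap, their intersection is a point in $\overline{U_{\spokeOne}}\cap \overline{U_{\spokeThree}}$ through which $L$ passes; one then has to upgrade from closures to the open sets, which is where a small perturbation or an openness argument enters. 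The cleaner route, and the one I would actually pursue, is: since $r \in U_{\spokeTwo}\cap U_{\rim}$ lies on $L$, and $L$ also meets $U_{\spokeOne}$ (at $p$) and $U_{\spokeThree}$ (at $q$), consider the convex set $U_{\spokeTwo}$; the trace $L \cap U_{\spokeTwo}$ is a subinterval containing $r$. The endpoints $p,q$ of $L$ may or may not be in $U_{\spokeTwo}$, but in any case the segment $[p,r]$ and the segment $[r,q]$ each lie in the appropriate half. I would then argue that a point of $[p,r]\cap U_{\spokeOne}$ near $r$ together with the fact that $[p,r]$ enters $U_{\spokeTwo}$ at $r$, and symmetrically for $[r,q]\cap U_{\spokeThree}$, forces an overlap; more precisely, because $U_{\spokeOne}\cap U_{\spokeThree} = Z \subseteq U_{\spokeTwo}$, one shows that the ``leftmost'' extent of $U_{\spokeThree}$ on $L$ and the ``rightmost'' extent of $U_{\spokeOne}$ on $L$ cannot be separated by the point $r$ without contradicting that $Z \subseteq U_{\spokeTwo}$ — so there is a point $x \in L$ with $x \in U_{\spokeOne}\cap U_{\spokeThree} = Z$. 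Finally, since $L \subseteq U_{\rim}$ (as $L$ joins two points of the convex set $U_{\rim}$... wait, I must be careful: $p, q \in U_{\rim}$ and $U_{\rim}$ convex, so indeed $L \subseteq U_{\rim}$), we get $x \in Z \cap U_{\rim} = U_{\spokeOne}\cap U_{\spokeTwo}\cap U_{\spokeThree}\cap U_{\rim}$, which is empty by~\ref{itm:Wii} — contradiction. Therefore $\realiz$ is not convex. The statement for codes follows immediately: if $\Wheel$ is a wheel of $\code$, it is a wheel of every realization, so no realization of $\code$ is convex, i.e., $\code$ is non-convex.

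The main obstacle I anticipate is the passage between open sets and their closures in the ``overlap'' argument of the previous paragraph: convexity gives that each trace $L \cap U_{\spokej}$ is an interval, but controlling whether the witness point $x$ lands in the open intersection rather than merely on the boundary requires care. I expect the resolution to hinge on the equality $U_{\spokeOne}\cap U_{\spokeThree} = U_{\spokeOne}\cap U_{\spokeTwo}\cap U_{\spokeThree}$ from~\ref{itm:Wi} — this is exactly what rules out the degenerate scenario where $U_{\spokeOne}$ and $U_{\spokeThree}$ meet $L$ in disjoint subintervals separated by a gap, since any boundary interaction would be detected by the $U_{\spokeTwo}$-trace. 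I would also double-check the edge case where $r$ coincides with $p$ or $q$, and the case where one of the traces $L \cap U_{\spokej}$ is a single point, handling these by the openness of the $U_i$ (so traces of nonempty intersections with a segment through an interior point are relatively open, hence genuine intervals of positive length unless the segment is tangential, which can be avoided by perturbing $L$ slightly within $U_{\rim}$ using that $U_{\rim}$ is open).
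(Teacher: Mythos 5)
Your high-level structure matches the paper's: obtain the segment $L$ from condition~\ref{itm:Wiv}, note that $L\subseteq U_{\rim}$ by convexity of $U_{\rim}$, argue that $L$ must meet $U_{\spokeOne}\cap U_{\spokeTwo}\cap U_{\spokeThree}$, and derive a contradiction with~\ref{itm:Wii}. The paper, however, outsources the middle step entirely to a published lemma (Lemma~3.2 of Lienkaemper, Shiu, and Woodstock), whereas you attempt to prove that geometric fact from scratch, and that is exactly where your argument has a genuine gap. Your ``overlap'' reasoning works entirely with the one-dimensional traces $L\cap U_{\spokei}$, and although you record at the outset that $Z := U_{\spokeOne}\cap U_{\spokeTwo}\cap U_{\spokeThree}$ is nonempty by~\ref{itm:Wi}, you never actually use this in the overlap step; but without using $Z\neq\varnothing$ the conclusion is simply false. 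Concretely, three convex open sets whose pairwise intersections all equal the (possibly empty) triple intersection can have pairwise-disjoint traces $[p,c_1)$, $(a_2,b_2)$, $(c_3,q]$ on a segment $L$, ordered left to right, with the left endpoint $p$ in $U_{\spokeOne}$, the right endpoint $q$ in $U_{\spokeThree}$, and $r\in(a_2,b_2)\subseteq U_{\spokeTwo}$ in between --- and then nothing along $L$ is contradicted. The sentence ``one shows that the leftmost extent of $U_{\spokeThree}$ on $L$ and the rightmost extent of $U_{\spokeOne}$ on $L$ cannot be separated by $r$ without contradicting that $Z\subseteq U_{\spokeTwo}$'' asserts the conclusion rather than proving it: when $L\cap Z=\varnothing$, the containment $Z\subseteq U_{\spokeTwo}$ says nothing about the ordering of the traces on $L$.

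To close the gap you must leave the line $L$. A correct argument fixes a witness $z\in Z$ and works in the $2$-plane spanned by $z$ and $L$: the segments $[z,p]$, $[z,r]$, $[z,q]$ lie respectively in $U_{\spokeOne}$, $U_{\spokeTwo}$, $U_{\spokeThree}$ by convexity, and a planar argument about the three wedges these segments carve out of the triangle with vertices $z,p,q$ shows the traces on $L$ must overlap. Your first sketched route --- taking $a=\sup(L\cap\overline{U_{\spokeOne}})$ and $b=\inf(L\cap\overline{U_{\spokeThree}})$ and hoping to pass from closures to interiors --- suffers from the same defect (it again sees only $L$), with the additional unaddressed issue that a boundary point of two open convex sets need not lie in either. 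You should either cite the existing lemma, as the paper does, or carry out the two-dimensional argument explicitly; the rest of your proof (including the clean final reduction from realizations to codes) is fine.
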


\begin{proof}
Let $(\spokeOne,\spokeTwo,\spokeThree,\rim)$ be a wheel of the realization
$\fullreal$ of $\code$ in $\bbR^d$, and assume for contradiction that $\realiz$ is
convex. Let $L$ be the line segment from~\ref{itm:Wiv}.  Since
$U_{\rim}$ is convex, $L\subseteq U_{\rim}$.  

The sets $U_{\spokei}$, for $i=1,2,3$,  are convex and
open. Condition~\ref{itm:Wi} implies that $L$ intersects
$U_{\spokeOne}\cap U_{\spokeTwo}\cap U_{\spokeThree}$ by \cite[Lemma 3.2]{lienkaemper2017obstructions}. As $L\subseteq 
U_{\rim}$, it follows that $U_{\spokeOne}\cap U_{\spokeTwo} \cap
U_{\spokeThree}\cap U_{\rim}\neq \varnothing$, which contradicts~\ref{itm:Wii}.
\end{proof}

The geometric intuition behind the proof of Theorem~\ref{thm:wheel}
(and the reason for our chosen terminology) is
that in a realization that has a wheel
$\WheelFull$, the $U_{\spokei}$'s force
$U_{\rim}$ to be non-convex by bending $U_{\rim}$ around their
intersection $U_{\scfaceS_1} \cap U_{\scfaceS_2} \cap U_{\scfaceS_3}$. 
%$U_{\spokeOne \cup \spokeTwo \cup  \spokeThree}$.  
  See Figure~\ref{fig:wheel}.
  %Essentially, the $U_{\spokei}$'s form the spokes of a wheel, with their intersection being the hub, and $U_{\rim}$ the rim. 

% FIGURE
\begin{figure}[ht]
\begin{center}
\resizebox{0.5\textwidth}{0.25\textwidth}{
\begin{tikzpicture}%[scale=0.4]
\draw[blue] (0,0) to (0,2) to (2,2) to (2,0) to (0,0);
\draw[blue] (0,0) to (-9,0) to (-9,2) to (0,2) to (0,0);
\draw[blue] (2,0) to (2,2) to (11,2) to (11,0) to (2,0);
\draw[blue] (0,2) to (0,11) to (2,11) to (2,2) to (0,2);
\draw[red] (0,10) arc (90:180:8);
\draw[red] (10,2) arc (0:90:8);
\draw[red] (0,8) arc (90:180:6);
\draw[red] (8,2) arc (0:90:6);
\draw[red] (10,2) to (10,-1) to (8,-1) to (8, 2);
\draw[red] (-8,2) to (-8,-1) to (-6,-1) to (-6, 2);
\draw[red] (2,10) to (0,10);
\draw[red] (2,8) to (0,8);
\node[black] at (-4,1) {\Huge $U_{\scfaceS_1}$};
\node[black] at (6,1) {\Huge $U_{\scfaceS_3}$};
\node[black] at (1,6) {\Huge $U_{\scfaceS_2}$};
\node[red] at (6,8) {\Huge $U_{\scfaceT}$};
\draw [fill=gray, opacity=0.2] (0,0) rectangle (11,2);
\draw [fill=blue, opacity=0.2] (-9,0) rectangle (2,2);
\draw [fill=magenta, opacity=0.2] (0,0) rectangle (2,11);
\end{tikzpicture}
}
\caption{
Depicted is a conceptual picture of a wheel, with intuition as follows: If $U_{\scfaceS_1}$, $U_{\scfaceS_2}$, and $U_{\scfaceS_3}$ are convex, then 
the region of the rim, $U_{\scfaceT}$, ``bends around'' the intersection of the spoke regions, $U_{\scfaceS_1} \cap U_{\scfaceS_2} \cap U_{\scfaceS_3}$, and so is non-convex.} \label{fig:wheel}

\end{center}
\end{figure}
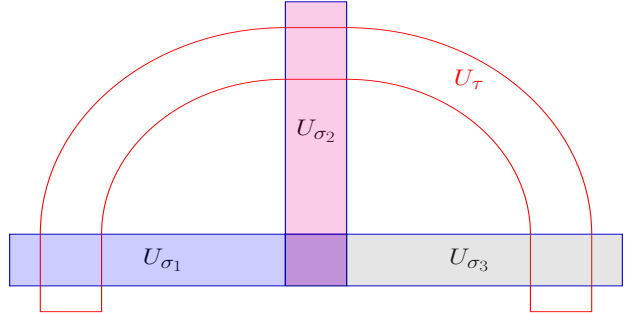

\begin{remark}[Open versus closed sets] \label{rmk:open}
Theorem \ref{thm:wheel} shows that wheels prevent codes from being realized by convex, open sets.  However, some codes with wheels can be realized by convex, \uline{closed} sets. 
Such a realization is shown in~\cite{cruz2019open} for the code in Example~\ref{ex:lienshiu}.  
\end{remark}

\begin{remark}[Relation to sunflowers] \label{rmk:sunflower} 
Wheels are closely related to ideas in recent work of Jeffs~\cite{jeffs2019sunflowers}.  As a start, 
in a realization $\realiz$ that has a wheel (or at least satisfies~\ref{itm:Wi}), the sets $U_{\scfaceS_1}$, $U_{\scfaceS_2}$, and $U_{\scfaceS_3}$ form what Jeffs calls a {\em 3-petal sunflower}.  
Jeffs also uses sunflowers to construct an infinite family of non-convex codes $\code_2, \code_3, \dots$~\cite[Definition 4.1]{jeffs2019sunflowers}.  
We show in Example~\ref{ex:jeffsc2} that $\code_2$ contains a wheel.  The codes 
$\code_m$, for $m\geq 3$, 
%$\code_3, \code_4, \dots$ 
have sunflowers with $m+1$ petals.
%4, 5, $\dots$ petals, respectively.  
%Our criteria for detecting wheels (see Section~\ref{sec:combinatorial-wheels}) 
The concept of a wheel
has not yet been generalized to such higher-dimensional cases, where,
instead of a line intersecting the 3 petals of a sunflower, 
a $(d-1)$-dimensional affine space 
%hyperplane - NEED NOT BE A HYPERPLANE (THE AMBIENT SPACE DIM COULD BE ANYTHING)
intersects $d+1$ petals (cf.\ \cite[Theorem 1.1]{jeffs2019sunflowers}). Indeed, we checked that 
the codes 
$\code_m$, for small $m\geq 3$, do not have the combinatorial wheels introduced in the next section.  Moreover, we expect that all $\code_m$, for $m\geq 3$, lack wheels; however, checking condition~\ref{itm:Wiv} is difficult.
Finally, we note that, unlike prior work on using sunflowers to preclude convexity, our work provides algorithms (see Procedure \ref{procedure}) for doing so.
\end{remark}

\begin{example}\label{ex:jeffsc2}
The following neural code was introduced by Jeffs~\cite[Definition 4.1]{jeffs2019sunflowers}:
$$ \code_2 ~=~ 
    \{ {\bf 1236}, 
    {\bf 234}, {\bf 135}, {\bf 456}, 
    13, 23, 4, 5, 6,
    \varnothing    \}~.$$ 
This code is non-convex~\cite[Theorem 4.2]{jeffs2019sunflowers}
 and, 
 among all non-convex codes, 
  is minimal with respect to a certain partial order  (relabeling the neurons via the permutation $(1,4,2,6,3,5)$ yields the code $\code_0$ in~\cite[Theorem 5.10]{jeffs2020morphisms}).
We will see 
in Example~\ref{ex:jeffsc2-again}
that $\code_2$ 
contains a wheel. %$(5,6,4,3)$
%, a type of wheel later described in Definition~\ref{def:sprocket}.  
\end{example}

Finally, we highlight that wheels, together with local obstructions, completely characterize non-convexity in codes on up to 5 neurons (cf.~\cite[Theorem~3.1]{goldrup2020classification}):

\begin{theorem} \label{thm:5-neurons-complete}
A code $\code$ on up to 5 neurons is convex if and only if $\code$ has no local obstructions and 
no wheel frames. 
%every realization of $\code$ has a wheel.  
\end{theorem}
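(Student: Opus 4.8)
The plan is to prove the two implications separately; the forward one is short, and the converse is where the work lies.

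\emph{Forward direction.} Suppose $\code$ is convex. Then $\code$ has no local obstructions by Proposition~\ref{prop:summary-prior-results}\eqref{itm:second}. Moreover $\code$ has no wheel frame: by the combinatorial criteria of Section~\ref{sec:search} (of which Proposition~\ref{prop:original-counterexample-wheel-frame} is the prototypical instance), a wheel frame of $\code$ yields a wheel of \emph{every} realization of $\code$, hence a wheel of $\code$; then Theorem~\ref{thm:wheel} would make $\code$ non-convex, a contradiction. This direction uses no restriction on $n$.

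\emph{Converse.} Let $\code$ be a code on $n\le 5$ neurons with no local obstructions and no wheel frame; we must show $\code$ is convex. If $n\le 4$, this is immediate from Proposition~\ref{prop:summary-prior-results}\eqref{itm:third}, since for $n\le 4$ the absence of local obstructions already implies convexity. So assume $n=5$, and argue the contrapositive: \emph{every non-convex code on five neurons with no local obstructions has a wheel frame.} Here we invoke the complete classification of codes on five neurons due to Goldrup and Phillipson~\cite{goldrup2020classification} (which builds on~\cite{curto2013neural, what-makes}): up to relabeling the neurons, the non-convex codes on $\le 5$ neurons with no local obstructions form an explicit finite list, whose archetype is $\code^{\star}$ from Example~\ref{ex:lienshiu}; in fact each code on the list has neural complex $\scplexCstar$ (the complex with facets $2345,\,123,\,134,\,145$) up to relabeling, and arises from $\mincodestar$ by adding codewords of a restricted form described in~\cite{goldrup2020classification}. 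It then remains to produce a wheel frame for each code on this list.

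Since a wheel frame is a purely combinatorial object -- a tuple $\Wheel=(\spokeOne,\spokeTwo,\spokeThree,\rim)$ of subsets of $[5]$ whose trunks $\tk_{\code}(\cdot)$ satisfy the conditions of Section~\ref{sec:search}, together with $\spokeOne\cup\spokeTwo\cup\spokeThree\cup\rim\notin\scplexCstar$ underlying~\ref{itm:Wii} -- this is a finite verification, which one can carry out with Procedure~\ref{procedure}; for $\code^{\star}$ itself it is Proposition~\ref{prop:original-counterexample-wheel-frame}. One also checks that the chosen wheel frame persists under the admissible enlargements that produce the other codes on the list (the relevant trunk equalities forcing~\ref{itm:Wi} and the non-face condition for~\ref{itm:Wii} are unaffected), so the verification reduces to a short list of representatives. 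Combining the two directions gives the theorem.

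The main obstacle is the converse, and within it the dependence on -- and careful bookkeeping of -- the five-neuron classification: one must be confident that the list of non-convex, locally-obstruction-free codes on $\le 5$ neurons is complete up to symmetry (which requires ruling out every simplicial complex on $\le 5$ vertices other than $\scplexCstar$, and, for that complex, pinning down exactly which codes between $\mincodestar$ and $\scplexCstar$ fail to be convex), and then checking that a wheel frame survives across each symmetry class and each admissible enlargement. The geometric condition~\ref{itm:Wiv} causes no trouble here, because a wheel frame is defined precisely so as to supply the combinatorial input that forces the required line segment in any convex realization; the genuine work is organizing the (finite) case analysis, which in practice is handled with the computational machinery developed in Sections~\ref{sec:search}--\ref{sec:reduce-decompose}.
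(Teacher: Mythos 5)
Your proposal follows essentially the same route as the paper's own proof. The forward direction uses Proposition~\ref{prop:summary-prior-results} together with Theorem~\ref{prop:wire-frame}/Theorem~\ref{thm:wheel}; for the converse, the $n \le 4$ case is handled by Proposition~\ref{prop:summary-prior-results}\eqref{itm:third}, and for $n=5$ the paper (like you) reduces to whether $\scplexC$ is isomorphic to $\scplexCstar$ -- using the Goldrup--Phillipson data plus the nondegeneracy/top-dimensionality results (Lemma~\ref{lem:nondeg-monotone}, Proposition~\ref{prop:five-neur-top-dim}) to dispose of all other complexes, and then invoking Proposition~\ref{prop:original-counterexample-wheel-frame} together with the characterization of which codes on $\scplexCstar$ are convex. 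Two small slips worth noting: a wheel frame is a \emph{triple} $(\scfaceS_1,\scfaceS_3,\scfaceT)$, not a quadruple; and a wheel frame produces a (possibly realization-dependent) wheel in each realization, not literally ``a wheel of $\code$'' -- but non-convexity follows from Theorem~\ref{thm:wheel} just the same, so the conclusion you draw is correct.
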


%As mentioned above, wheel frames are defined in the next section; they are combinatorial objects that imply the existence of wheels (Proposition~\ref{prop:wire-frame}). 
A ``wheel frame'', which is defined in the next section, 
is a combinatorial object 
whose presence implies the existence of wheels 
%we will see that wheel frames imply the existence of wheels 
(Theorem~\ref{prop:wire-frame}). 
We prove Theorem~\ref{thm:5-neurons-complete} at the end of Section~\ref{sec:reduce-decompose}.

\section{Combinatorics of wheels} \label{sec:search}
We introduced wheels using realizations (Definition~\ref{def:wheel}),
because they are convenient for proving non-convexity. That being said,
the true goal of this article is to obtain non-convexity criteria in
terms of $\code$ and $\scplexC$, in a similar way to how 
the mandatory faces of $\scplexC$ 
%$\ManC$ 
determine whether $\code$ has local obstructions (recall Proposition~\ref{prop:summary-prior-results}\eqref{itm:second}).

It turns out that conditions~\ref{itm:Wi},~\ref{itm:Wii}, and
\ref{itm:Wiii} from Definition~\ref{def:wheel} can be restated
combinatorially, and depend only on the code $\code$ and not on the
specific realization $\realiz$ (Proposition~\ref{prop:D} below). 
The geometric condition~\ref{itm:Wiv}, however, 
is more subtle. Indeed, it is an open problem to recast~\ref{itm:Wiv} completely in
combinatorial terms, or show that no such characterization exists.
Nevertheless, we are able to provide combinatorial
criteria that imply the existence of wheels in every realization of
$\code$ (see Section~\ref{sec:combinatorial-wheels}).

\subsection{Combinatorial versions of \ref{itm:Wi}, \ref{itm:Wii}, and \ref{itm:Wiii}}

We start with a useful technical result.

\begin{lemma}\label{lem:ABC}
Consider a code $\code$ on $n$ neurons,
a realization $\fullreal$ of
$\code$, % by open sets, 
and subsets $\varphi,\psi_1,\psi_2,\dots,\psi_r 
\subseteq [n]$.
%\in 2^{[n]}$. 
Then $U_{\varphi}\subseteq \bigcup_{t=1}^r U_{\psi_t}$ if
and only if 
$\tk_{\code}(\varphi)\subseteq \bigcup_{t=1}^r \tk_{\code}(\psi_t)$.
%every codeword $c \in \code$ that contains 
% $\varphi$ also contains at least one of $\psi_1,\dots,\psi_r$.
\end{lemma}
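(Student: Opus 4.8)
The plan is to prove both implications directly by unwinding the definitions of trunk, atom, and the nerve condition $\scfaceS \in \scplexC \iff U_{\scfaceS} \neq \varnothing$.

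First I would recall the basic fact that $U_\varphi \neq \varnothing$ if and only if $U_\varphi$ contains at least one nonempty atom, and more precisely that $U_\varphi = \bigsqcup_{c \in \tk_{\code}(\varphi)} \mathrm{atom}(c)$ — that is, the region $U_\varphi$ is the disjoint union of the atoms of exactly those codewords $c$ that contain $\varphi$. This is because a point of $X$ lies in $U_\varphi = \bigcap_{i \in \varphi} U_i$ precisely when its "address" (the set of indices $i$ with the point in $U_i$, which equals the codeword whose atom contains the point) is a superset of $\varphi$. Since $\realiz$ is a realization, every such address is a codeword of $\code$, so the nonempty atoms covering $U_\varphi$ are exactly those of $\tk_{\code}(\varphi)$.

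For the ($\Leftarrow$) direction: assume $\tk_{\code}(\varphi) \subseteq \bigcup_{t=1}^r \tk_{\code}(\psi_t)$. Take any point $x \in U_\varphi$; it lies in the atom of some codeword $c$ with $\varphi \subseteq c$, i.e. $c \in \tk_{\code}(\varphi)$. By hypothesis $c \in \tk_{\code}(\psi_t)$ for some $t$, so $\psi_t \subseteq c$, hence $x \in U_{\psi_t} \subseteq \bigcup_t U_{\psi_t}$. Thus $U_\varphi \subseteq \bigcup_{t=1}^r U_{\psi_t}$. For the ($\Rightarrow$) direction: assume $U_\varphi \subseteq \bigcup_t U_{\psi_t}$ and take $c \in \tk_{\code}(\varphi)$. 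Then $\mathrm{atom}(c) \neq \varnothing$ and $\mathrm{atom}(c) \subseteq U_c \subseteq U_\varphi \subseteq \bigcup_t U_{\psi_t}$; pick $x \in \mathrm{atom}(c)$, so $x \in U_{\psi_t}$ for some $t$, meaning $\psi_t \subseteq c$ (since $c$ is exactly the set of $i$ with $x \in U_i$, as $x$ lies in $\mathrm{atom}(c)$), i.e. $c \in \tk_{\code}(\psi_t)$. Hence $\tk_{\code}(\varphi) \subseteq \bigcup_t \tk_{\code}(\psi_t)$.

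I do not anticipate a serious obstacle here; the statement is essentially a translation lemma. The only point requiring minor care is the degenerate cases: when $\varphi \notin \scplexC$ we have $U_\varphi = \varnothing$ and $\tk_{\code}(\varphi) = \varnothing$, so both sides hold vacuously; and the convention $U_\varnothing := X$ together with $\varnothing \in \code$ (Assumption~\ref{assum:empty-codeword}) keeps the $\varphi = \varnothing$ or $\psi_t = \varnothing$ cases consistent. I would state the atom-decomposition of $U_\varphi$ as the one computational observation and then let the two set-containment chases above finish the argument.
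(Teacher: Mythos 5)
Your proof is correct and follows essentially the same route as the paper's: in both, the forward direction picks a point in the (nonempty) atom of a given codeword $c$ and uses the region containment to find a $\psi_s$ with $\psi_s\subseteq c$, while the backward direction takes a point of $U_\varphi$, reads off its address codeword $c$, and applies the trunk hypothesis. The paper writes this with the set-difference $U_c\smallsetminus\bigcup_{i\notin c}U_i$ rather than the word ``atom,'' and does not spell out the disjoint-union decomposition of $U_\varphi$, but the underlying argument is identical.
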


\begin{proof} 

$(\Rightarrow)$ Suppose  $U_{\varphi}\subseteq \bigcup_{t=1}^r
U_{\psi_t}$. 
Let $ c\in \tk_{\code}(\varphi)$, that is, $\varphi \subseteq c \in \code$. (We must show that $ \psi_s  \subseteq c$ for some $1 \leq s \leq r$.)  
The containment $\varphi \subseteq c$ yields the first containment below:
\begin{align} \label{eq:containment-1-lemma}
U_c 
    ~\subseteq~
U_\varphi
    ~\subseteq~ \bigcup_{i=1}^r U_{\psi_t}~,
\end{align}
and the second containment is by hypothesis.
%It follows that $(\bigcup_{i=1}^t U_{\psi_t})
%\smallsetminus (\bigcup_{i\not\in
%  c}U_i) \neq \varnothing$. 
Next, $c \in \code$ 
implies that $U_c\smallsetminus (\bigcup_{i\not\in
  c}U_i ) \neq \varnothing$. 
Combining this inequality with~\eqref{eq:containment-1-lemma} yields $U_{\psi_s} \smallsetminus (\bigcup_{i\not\in
  c}U_i) \neq \varnothing$ for some $s$, which implies $\psi_s
\subseteq c$. 
%
%
%Since $U_\varphi \supseteq U_c$, $p \in U_\varphi$, which
%implies that $p$ is contained in some $U_{\psi_t}$ by assumption. Now
%$p \in (U_c\smallsetminus \bigcup_{i\not\in
%  c}U_i) \cap U_{\psi_t}$ implies that $\psi_t \supseteq c$.
%
%
%
%For all $i\in \varphi$ (and thus contained in $c$), it must be that $p\in U_c\subseteq U_i$. Therefore $p\in U_{\varphi}$ and so by assumption is contained in some $U_{\psi_t}$. For every $i\in \psi_t$, the fact that $p\in U_{\psi_t}\subseteq U_i$ and the fact that by construction of $c$, $p$ would not be in $U_i$ if $i\not\in c$ together imply that $i\in c$. Therefore, $\psi_t\subseteq c$.  
%

$(\Leftarrow)$ Assume that
$\tk_{\code}(\varphi)\subseteq \bigcup_{t=1}^r \tk_{\code}(\psi_t)$, 
that is, $\varphi \subseteq c \in \code$ implies that $\psi_s \subseteq c$ for some $s$.
Let $p\in U_{\varphi}$;
we must show that $p \in U_{\psi_s}$ for some $s$.
Define $c = \{i\in [n]\ |\
p\in U_i\}$. 
By construction, 
$p\in
U_c\smallsetminus \bigcup_{i\not\in c}U_i$;
so, $c \in \code$.  
Also by construction (of $p$ and $c$),
$\varphi \subseteq c$.  
Hence, by hypothesis, there exists $s$ such that $\psi_s
\subseteq c$. 
So, $p \in U_i$ for all $i \in \psi_s$. In other words, $p \in U_{\psi_s}$. 
\end{proof}

We are now ready to combinatorially recast part of Definition~\ref{def:wheel}.

\begin{definition}
  \label{def:partialWheel}
  A tuple $\WheelFull\in (\scplexC)^4$ is a \textit{partial-wheel} of
  a neural code $\code$ if it satisfies the following conditions:
\begin{description}
    \item[\namedlabel{itm:Di}{P(i)}] $\spokeOne \cup \spokeTwo \cup
      \spokeThree \in \scplexC$, and 
      $\tk_{\code}(\spokej \cup \spokek ) = \tk_{\code}(\spokeOne \cup \spokeTwo \cup \spokeThree)$ for every $1\leq j < k \leq 3$, %Laura proposed to make this = not \subseteq, since the other inclusion is obvious
    \item[\namedlabel{itm:Dii}{P(ii)}]$\spokeOne \cup \spokeTwo \cup
      \spokeThree \cup \rim \not\in \scplexC$, and 
    \item[\namedlabel{itm:Diii}{P(iii)\textsubscript{$\circ$}}]
      $\spokej \cup \rim \in \scplexC$ for $j = 1,2,3$. 
    \end{description}
\end{definition}

\begin{prop}[Equivalence of \ref{itm:Wi} and \ref{itm:Di}, etc.] \label{prop:D}
Let $\code$ be a neural code, and let $\WheelFull\in (\scplexC)^4$. Using the notation from Definition~\ref{def:partialWheel}, the following are equivalent:
\begin{itemize}
    \item[(1)] $\calW$ satisfies \ref{itm:Wi} (respectively \ref{itm:Wii}, \ref{itm:Wiii}) for some realization $\realiz$ of $\code$,
    \item[(2)] $\calW$ satisfies \ref{itm:Wi} (respectively \ref{itm:Wii}, \ref{itm:Wiii}) for all realizations $\realiz$ of $\code$, and
    \item[(3)] $\calW$ satisfies \ref{itm:Di} (respectively \ref{itm:Dii}, \ref{itm:Diii}). 
\end{itemize}
\end{prop}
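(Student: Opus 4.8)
## Proof proposal

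The plan is to prove the three-way equivalence by a cycle of implications, exploiting Lemma~\ref{lem:ABC} as the bridge between the set-theoretic conditions \ref{itm:Wi}, \ref{itm:Wii}, \ref{itm:Wiii} and the purely combinatorial conditions \ref{itm:Di}, \ref{itm:Dii}, \ref{itm:Diii}. Since (2)$\Rightarrow$(1) is trivial (any realization exists because every code has a realization), it suffices to show (1)$\Rightarrow$(3) and (3)$\Rightarrow$(2); each of the three conditions should be handled separately but in parallel. Throughout I would use the two basic translations: first, $\bigcap_{i\in\scfaceS}U_i\neq\varnothing$ iff $\scfaceS\in\scplexC$ (the nerve property recalled in the Preliminaries), which is exactly what equates \ref{itm:Wii} with ``$\spokeOne\cup\spokeTwo\cup\spokeThree\cup\rim\notin\scplexC$'' and the nonemptiness clauses of \ref{itm:Wi} and \ref{itm:Wiii} with the ``$\cdots\in\scplexC$'' clauses of \ref{itm:Di} and \ref{itm:Diii}; second, the observation that for $\scfaceS\subseteq\scfaceS'$ we have $\tk_{\code}(\scfaceS)\supseteq\tk_{\code}(\scfaceS')$ always, and $U_{\scfaceS'}\subseteq U_{\scfaceS}$ always, so the content of \ref{itm:Wi} is the reverse inclusions.

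For \ref{itm:Wii} vs.\ \ref{itm:Dii}: this is immediate from the nerve property — $U_{\spokeOne}\cap U_{\spokeTwo}\cap U_{\spokeThree}\cap U_{\rim}=U_{\spokeOne\cup\spokeTwo\cup\spokeThree\cup\rim}=\varnothing$ iff $\spokeOne\cup\spokeTwo\cup\spokeThree\cup\rim\notin\scplexC$, and this holds in one realization iff in all. For \ref{itm:Wiii} vs.\ \ref{itm:Diii}: again pointwise, $U_{\spokej}\cap U_{\rim}=U_{\spokej\cup\rim}\neq\varnothing$ iff $\spokej\cup\rim\in\scplexC$, for each $j=1,2,3$, with realization-independence for the same reason. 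The substantive case is \ref{itm:Wi} vs.\ \ref{itm:Di}. Here I would argue: since $U_{\spokeOne\cup\spokeTwo\cup\spokeThree}\subseteq U_{\spokej\cup\spokek}$ for all $j<k$ automatically, condition \ref{itm:Wi} holds iff $U_{\spokej\cup\spokek}\subseteq U_{\spokeOne\cup\spokeTwo\cup\spokeThree}$ for all three pairs, together with $U_{\spokeOne\cup\spokeTwo\cup\spokeThree}\neq\varnothing$. Apply Lemma~\ref{lem:ABC} with $\varphi=\spokej\cup\spokek$, $r=1$, $\psi_1=\spokeOne\cup\spokeTwo\cup\spokeThree$: the inclusion $U_{\spokej\cup\spokek}\subseteq U_{\spokeOne\cup\spokeTwo\cup\spokeThree}$ is equivalent to $\tk_{\code}(\spokej\cup\spokek)\subseteq\tk_{\code}(\spokeOne\cup\spokeTwo\cup\spokeThree)$, which combined with the automatic reverse inclusion of trunks gives $\tk_{\code}(\spokej\cup\spokek)=\tk_{\code}(\spokeOne\cup\spokeTwo\cup\spokeThree)$. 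The nonemptiness $U_{\spokeOne\cup\spokeTwo\cup\spokeThree}\neq\varnothing$ is equivalent to $\spokeOne\cup\spokeTwo\cup\spokeThree\in\scplexC$ by the nerve property. This shows \ref{itm:Wi} (for any fixed realization) is equivalent to \ref{itm:Di}, and since \ref{itm:Di} is a statement about $\code$ and $\scplexC$ only, it holds in one realization iff in all. Stringing these together closes the cycle.

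The main obstacle, such as it is, is bookkeeping rather than mathematics: one must be careful that Lemma~\ref{lem:ABC} is stated for a single inclusion $U_\varphi\subseteq\bigcup U_{\psi_t}$, so to handle \ref{itm:Wi} one applies it three times (once per pair $\{j,k\}$) with a one-element family on the right, and one must separately handle the nonemptiness clause, which is not covered by the lemma but follows from the nerve property. I would also note explicitly at the outset that $\spokeOne,\spokeTwo,\spokeThree,\rim\in\scplexC$ is assumed in both Definition~\ref{def:partialWheel} and the hypothesis of the proposition, so these individual faces cause no issue. A clean way to write the proof is to state once the two translation principles (nerve property; Lemma~\ref{lem:ABC}) and then dispatch the three conditions in three short labeled paragraphs, each establishing the chain (1)$\Leftrightarrow$(3)$\Rightarrow$(2)$\Rightarrow$(1).
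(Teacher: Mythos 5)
Your proposal is correct and follows essentially the same route as the paper: it invokes the nerve property for the \ref{itm:Wii}$\leftrightarrow$\ref{itm:Dii} and \ref{itm:Wiii}$\leftrightarrow$\ref{itm:Diii} equivalences and Lemma~\ref{lem:ABC} with $r=1$, $\varphi=\spokej\cup\spokek$, $\psi_1=\spokeOne\cup\spokeTwo\cup\spokeThree$ for \ref{itm:Wi}$\leftrightarrow$\ref{itm:Di}. You are more explicit than the paper about the automatic inclusions $U_{\hub}\subseteq U_{\spokej\cup\spokek}$ and $\tk_{\code}(\hub)\subseteq\tk_{\code}(\spokej\cup\spokek)$ and about splitting off the nonemptiness clause, but this is just the bookkeeping the paper leaves implicit.
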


\begin{proof}
The fact that $\scplexC$ is the nerve of every realization %cover 
of $\code$
directly implies the equivalences involving \ref{itm:Wii} and \ref{itm:Wiii}. For the equivalences involving \ref{itm:Wi}, use Lemma \ref{lem:ABC} with 
$r = 1$, 
$\varphi = (\spokei \cup \spokej)$, and $\psi = \hub$. 
\end{proof}

\subsection{Combinatorial wheels} \label{sec:combinatorial-wheels}
Our next goal is to give (combinatorial) criteria in terms of $\code$ and~$\scplexC$
that imply the existence of wheels (and therefore non-convexity of
$\code$). We introduce three such criteria, and we call the resulting
wheel a sprocket, wire wheel, or wheel frame (see
Propositions~\ref{prop:sprocket},~\ref{prop:wire-wheel}, and Theorem~\ref{prop:wire-frame}).

\begin{definition}
  \label{def:sprocket}
A \textit{sprocket} of a neural code $\code$ is a partial-wheel (Definition~\ref{def:partialWheel})
$\WheelFull$ of $\code$ that in addition satisfies:
  \begin{description}
    \item[\namedlabel{itm:Div-rho}{S(iii)}] 
    There exist $\constrOne$, $\constrThree \in \scplexC$
    %\subseteq [n]$ 
    such that: %\in 2^{[n]}$ such that \sout{for any $c\in \code$},
        \begin{description}
            \item[\namedlabel{itm:Div-rho-1}{S(iii)(1)}] 
            %\sout{if $\spokej \cup \rim \subseteq c$ then %$\constrj\subseteq c$}
            $\tk_{\code}(\spokej \cup \rim)\subseteq \tk_{\code}(\constrj)$ for $j = 1,3$,
            \item[\namedlabel{itm:Div-rho-2}{S(iii)(2)}] 
            %\sout{if $\rim \subseteq c$ then at least one of $\constrOne$ or $\constrThree$ is contained in $c$} 
            $\tk_{\code}(\rim)\subseteq \tk_{\code}(\constrOne) \cup \tk_{\code}(\constrThree)$, and 
            \item[\namedlabel{itm:Div-rho-3}{S(iii)(3)}] 
            %\sout{if %$\constrOne\cup \constrThree \cup \rim \subseteq c$ then $\spokeTwo \subseteq c$} 
            $\tk_{\code} (\constrOne \cup \constrThree \cup \rim)\subseteq \tk_{\code}(\spokeTwo)$.
        \end{description}
\end{description}
 The sets $\constrOne$ and $\constrThree$ are {\em witnesses} for the sprocket.
%{\color{blue} I think it's a great idea to give $\rho_1$ and $\rho_3$ names. Should we keep the terminology in line with the metaphor of a sprocket though? From Wikipedia, the device used to keep a sprocket aligned is called a gear train or a roller chain. Since it would be awkward to refer to a pair of sets as a single entity, we should probably stick with witnesses.}
  \end{definition}

% Unfortunately, checking the final condition \ref{itm:Wiv}, in which a straight line pierces each $U_{\spokei \cup \rim}$, is more difficult. While we do not have an encompassing necessary and sufficient condition for \ref{itm:Wiv}, we do have several criteria that are sufficient for this property.

To explain the terminology, a sprocket is a toothed wheel, such as the
gear wheel on a bicycle (see Figure~\ref{fig:sprocket}); we imagine the sets $U_{\constrOne}$ and
$U_{\constrThree}$ as overlapping links in a roller chain set on a sprocket.

\begin{figure}
  \includegraphics[width=1in]{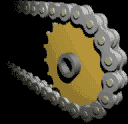}
  \caption{
  A sprocket.  This image is a still of an animation of the point of contact between a chain and a sprocket, which was produced by the US Department of Labor and is in the public domain~\cite{sprocket-image}. Conceptually, the witnesses $\constrOne$ and $\constrThree$ fill the role of aligning the spokes $\spokeOne$, $\spokeTwo$, and $\spokeThree$, similar to how the links of the chain keep the physical sprocket in place.
    \label{fig:sprocket}}
\end{figure}

% Occupational Safety & Health Administration (OSHA) part of the U.S. Department of Labor - http://www.osha.gov/SLTC/etools/machineguarding/animations/chain.html

%Animation of a chain. Point of Contact Between a Chain and a Sprocket
%This work is in the public domain in the United States because it is a work of the United States Department of Labor under the terms of Title 17, Chapter 1, Section 105 of the US Code and the DOL copyright policy. See Copyright.

The following result shows that sprockets are wheels.
  
\begin{prop}[Every sprocket is a wheel]\label{prop:sprocket}

Let $\code$ be a code on $n$ neurons, and 
assume that 
$\WheelFull\in (2^{[n]})^4$ 
satisfies \ref{itm:Wiii} (or equivalently \ref{itm:Diii}). 
If $\Wheel$ satisfies condition \ref{itm:Div-rho} from
Definition~\ref{def:sprocket}, then $\Wheel$ satisfies
\ref{itm:Wiv}. In particular, if $\Wheel$ is a sprocket of $\code$,
then $\Wheel$ is a wheel of $\code$. Consequently, 
 codes with sprockets are non-convex.
\end{prop}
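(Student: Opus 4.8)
The plan is to verify condition~\ref{itm:Wiv} directly: assuming $U_{\rim}$ and the $U_{\spokej}\cap U_{\rim}$ are convex, I will produce the required line segment from the hypotheses of~\ref{itm:Div-rho}. The starting point is that~\ref{itm:Div-rho}(1)--(3) are statements about trunks, so by Lemma~\ref{lem:ABC} they translate into containments among the regions $U_{(-)}$. Explicitly: \ref{itm:Div-rho-1} gives $U_{\spokej}\cap U_{\rim} \subseteq U_{\constrj}$ for $j=1,3$; condition \ref{itm:Div-rho-2} gives $U_{\rim} \subseteq U_{\constrOne} \cup U_{\constrThree}$; and \ref{itm:Div-rho-3} gives $U_{\constrOne}\cap U_{\constrThree}\cap U_{\rim} \subseteq U_{\spokeTwo}$. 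Also, since $\calW$ satisfies~\ref{itm:Wiii}, each $U_{\spokej}\cap U_{\rim}$ is nonempty, so we may pick points $p_1 \in U_{\spokeOne}\cap U_{\rim}$ and $p_3 \in U_{\spokeThree}\cap U_{\rim}$; these are the candidate endpoints of the segment $L := [p_1,p_3]$.

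The key step is to show $L$ meets $U_{\spokeTwo}\cap U_{\rim}$. Since $U_{\rim}$ is convex and $p_1, p_3 \in U_{\rim}$, we already have $L \subseteq U_{\rim}$, so it suffices to show $L$ meets $U_{\spokeTwo}$. By~\ref{itm:Div-rho-1}, $p_1 \in U_{\constrOne}$ and $p_3 \in U_{\constrThree}$. Now consider how $L$ is covered by the two (not necessarily convex) sets $U_{\constrOne}$ and $U_{\constrThree}$: by~\ref{itm:Div-rho-2}, $L \subseteq U_{\rim} \subseteq U_{\constrOne}\cup U_{\constrThree}$. The plan is to invoke a connectedness argument on the segment: parametrize $L$ by $[0,1]$ with $0 \mapsto p_1$, $1\mapsto p_3$; the preimages of $U_{\constrOne}$ and $U_{\constrThree}$ are open in $[0,1]$ (regions are open), their union is all of $[0,1]$, the first contains $0$ and the second contains $1$, so by connectedness of $[0,1]$ they must overlap — there is a point $q \in L$ with $q \in U_{\constrOne}\cap U_{\constrThree}$. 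Since $q \in L \subseteq U_{\rim}$, we get $q \in U_{\constrOne}\cap U_{\constrThree}\cap U_{\rim} \subseteq U_{\spokeTwo}$ by~\ref{itm:Div-rho-3}. Thus $q \in U_{\spokeTwo}\cap U_{\rim}$, and $q$ lies on $L$, establishing~\ref{itm:Wiv}.

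Finally, once $\calW$ is known to satisfy~\ref{itm:Wi}, \ref{itm:Wii}, \ref{itm:Wiii}, and~\ref{itm:Wiv} in every realization — which follows from Proposition~\ref{prop:D} for the first three (\ref{itm:Di}, \ref{itm:Dii}, \ref{itm:Diii} are combinatorial, realization-independent) and from the argument above for~\ref{itm:Wiv} — the tuple $\calW$ is a wheel of $\code$ by Definition~\ref{def:wheel}. Then Theorem~\ref{thm:wheel} immediately yields that $\code$ is non-convex, giving the last sentence of the statement.

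The main obstacle I anticipate is a subtlety in the connectedness step: I need $U_{\constrOne}\cap L$ and $U_{\constrThree}\cap L$ to genuinely be \emph{open} subsets of the segment and to genuinely \emph{cover} it. Openness is fine since the $U_i$ (hence all $U_\sigma$) are open in the stimulus space, and $L \subseteq U_\rim \subseteq X$. The covering is exactly~\ref{itm:Div-rho-2} via Lemma~\ref{lem:ABC}. One point that deserves care: I must make sure the nonempty sets $U_{\spokej}\cap U_{\rim}$ really are available, i.e.\ that~\ref{itm:Wiii} (not just~\ref{itm:Wiv}) holds — this is given by hypothesis in Proposition~\ref{prop:sprocket}, and it is what lets me pick the endpoints $p_1,p_3$ in the first place. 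A secondary point is purely bookkeeping: checking that the translation via Lemma~\ref{lem:ABC} is applied with the right choices of $\varphi$ and the $\psi_t$ in each of the three cases (e.g.\ $r=1$ for~\ref{itm:Div-rho-1} and~\ref{itm:Div-rho-3}, with $\varphi = \constrOne\cup\constrThree\cup\rim$, $\psi_1 = \spokeTwo$ in the latter; and $r=2$ for~\ref{itm:Div-rho-2}). No genuinely hard geometry is needed beyond the one-dimensional connectedness argument, which is the same idea underlying \cite[Lemma 3.2]{lienkaemper2017obstructions} used in the proof of Theorem~\ref{thm:wheel}.
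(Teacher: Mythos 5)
Your argument is correct and follows the same line as the paper's proof: translate \ref{itm:Div-rho} into the geometric containments via Lemma~\ref{lem:ABC}, pick endpoints $p_1\in U_{\spokeOne}\cap U_{\rim}\subseteq U_{\constrOne}$ and $p_3\in U_{\spokeThree}\cap U_{\rim}\subseteq U_{\constrThree}$, observe that the segment $L$ lies in $U_{\rim}\subseteq U_{\constrOne}\cup U_{\constrThree}$, and use connectedness of $L$ to force $U_{\constrOne}\cap U_{\constrThree}\cap L\neq\varnothing$, which by \ref{itm:Div-rho-3} sits inside $U_{\spokeTwo}\cap U_{\rim}$. The only cosmetic difference is that you parametrize $L$ by $[0,1]$ rather than applying the connectedness argument directly to $L$ as a subspace.
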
 
 
\begin{proof}

We must show that \ref{itm:Div-rho}
and \ref{itm:Diii} 
together 
imply \ref{itm:Wiv}. Using Lemma~\ref{lem:ABC}, we see
that~\ref{itm:Div-rho} is equivalent to the following condition in
terms of realizations (``G'' here stands for ``geometric''): %covers:
\begin{description}
\item[\namedlabel{itm:Giv-rho}{G(iii)}] 
%In every realization of $\code$ by open sets, we have that $U_{\spokej}\cap U_{\rim}\neq \varnothing$ for $j=1,2,3$, and there exist $\constrOne,\constrThree\in 2^{[n]}$ such that 
There exist $\constrOne, \constrThree \subseteq [n] $ % \in 2^{[n]}$ 
    such that in every
realization $\fullreal$ of $\code$,
    \begin{description}
        \item[\namedlabel{itm:Giv-rho-1}{G(iii)(1)}] $U_{\spokej} \cap U_{\rim} \subseteq U_{\constrj}$ for $j=1,3$,
        \item[\namedlabel{itm:Giv-rho-2}{G(iii)(2)}] $U_{\rim}\subseteq U_{\constrOne}\cup U_{\constrThree}$, and
        \item[\namedlabel{itm:Giv-rho-3}{G(iii)(3)}] $U_{\constrOne}\cap U_{\constrThree}\cap U_{\rim} \subseteq 
        U_{\spokeTwo}$.
    \end{description}
\end{description}    

To complete the proof, we now show that \ref{itm:Giv-rho} and \ref{itm:Diii} imply \ref{itm:Wiv}.

Let $\realiz$ be a realization of $\code$ 
such that $U_{\rim}$ and $U_{\rim}\cap U_{\spokej}$ are convex for $j=1,2,3$. 
Then, for $j=1,3$, condition~\ref{itm:Diii} and the fact that $\scplexC$ is the nerve of $\realiz$ imply the inequality here:
\begin{align*} %\label{eq:proof-eq-1}
    \varnothing ~\neq~  (U_{\spokej}\cap U_{\rim} )
        ~\subseteq~ U_{\constrj}~,
\end{align*}
and the containment follows from~\ref{itm:Giv-rho-1}.
So, for
$j=1,3$, 
there exists
$p_j\in U_{\spokej}\cap U_{\rim} \subseteq
U_{\constrj}$. As $p_1$ and $p_3$ are in the convex set $U_{\rim}$, so too is the line segment, denoted by $L$, between $p_1$ and $p_3$:
\begin{align} \label{eq:proof-eq-2}
    L ~\subseteq~ U_{\scfaceT} ~\subseteq~ (U_{\constrOne}\cup U_{\constrThree})~,
\end{align}
and the second containment is by~\ref{itm:Giv-rho-2}. 
Thus, the (connected) set $L$ is covered by two nonempty sets 
$U_{\constrOne}\cap L$ and
$U_{\constrThree}\cap L$,
which are open in the subspace topology of $L$.  So, 
\begin{align} \label{eq:proof-eq-3}
\varnothing ~\neq~ (U_{\constrOne}\cap U_{\constrThree}\cap L) 
~\subseteq~
(U_{\constrOne}\cap U_{\constrThree}\cap U_{\rim})
~\subseteq~
U_{\spokeTwo}
~,
\end{align}
where the containments are by~\eqref{eq:proof-eq-2} and
\ref{itm:Giv-rho-3},
respectively.  
Now it follows 
from~\eqref{eq:proof-eq-2} and~\eqref{eq:proof-eq-3}
that $L$ meets  $U_{\spokeTwo}\cap U_{\rim}$, and so 
\ref{itm:Wiv} holds.
\end{proof} 

In the previous proof, we showed that \emph{every} line segment
whose endpoints are one in $U_{\spokeOne}\cap U_{\rim}$ and the other in
$U_{\spokeThree}\cap U_{\rim}$, meets $U_{\spokeTwo} \cap U_{\rim}$. This
is, on its face, stronger than~\ref{itm:Wiv}, which requires the
existence of only one such line segment. 

\begin{example}[A code with a sprocket] \label{ex:jeffsc2-again}
Recall the code 
$\code_2 =  \{ {\bf 1236}, 
    {\bf 234}, {\bf 135}, {\bf 456}, 
    13, 23, 4, 5, 6,
    \varnothing    \}$
from Example~\ref{ex:jeffsc2}.
We show that $\WheelFull =(5, 6, 4, 3)$ is a sprocket, with witnesses $\rho_1 = 13$ and $\rho_3 = 23$.  First, 
$\spokeOne \cup \spokeTwo \cup \spokeThree = 456 \in \scplexC$, 
and 
$\tk_{\code}(\spokei \cup \spokej) = \{456\} = \tk_{\code}(456)$ for all $1 \leq i < j \leq 3$.  So 
\ref{itm:Di} holds.  
Next, $\spokeOne \cup \spokeTwo \cup \spokeThree \cup \scfaceT = 3456 \notin \scplexC$, so 
\ref{itm:Dii} is satisfied.  
Also, 
$\spokeOne \cup \scfaceT =35$, 
$\spokeTwo \cup \scfaceT =36$, 
and 
$\spokeThree \cup \scfaceT =34$ are all faces of $\scplexC$; this verifies  
\ref{itm:Diii}.
Next, 
 $\tk_{\code}(\spokeOne \cup \rim)
 = \tk_{\code}(35)=\{135\} 
 \subseteq \{1236, 13, 135\}= 
 \tk_{\code}(13) =
 \tk_{\code}(\rho_1)$ and
 $\tk_{\code}(\spokeTwo \cup \rim) 
 = \tk_{\code}(34) = \{234\} 
 \subseteq \{23, 235\} = \tk_{\code} (23)
 =
 \tk_{\code}(\rho_2)$,
 which verifies~\ref{itm:Div-rho-1}.  
Checking \ref{itm:Div-rho-2} and~\ref{itm:Div-rho-3} 
is similarly straightforward.
\end{example}

However, not every wheel is a sprocket. 
% EXAMPLE
\begin{example}[A wheel that is not a sprocket] \label{ex:code-tree-link}
Consider the neural code  
$$\code_{\treelink} = \{
{\bf 123},
{\bf 145},
{\bf 245},
{\bf 246},
{\bf 346},
24,45,46,
1,2,3,
\varnothing 
\}~,$$  
and consider 
$\Wheel_{\treelink} = (\spokeOne, \spokeTwo, \spokeThree, \rim)
= (1, 2, 3, 4)$.
Here, 
``TL'' stands for ``tree link'', and the meaning will be apparent in the proof of Proposition~\ref{prop:3-sparse-counterex}, where we show that $\Wheel_{\treelink}$ is a wheel.  
For now, we show that $\Wheel_{\treelink}$ is not a sprocket by proving there is no eligible pair $\rho_1$ and $\rho_3$.

Suppose for contradiction that $\rho_1$ and $\rho_3$ satisfy \ref{itm:Div-rho}. 
The codewords $145$ and $346$ contain, respectively, $\spokeOne \cup \rim = 14$ and $\spokeThree \cup \rim = 34$, so \ref{itm:Div-rho-1} implies that $\rho_1 \subseteq 145$ and $\rho_3 \subseteq 346$.
Next, $24$ is a codeword that contains $\scfaceT = 4$, so, 
by \ref{itm:Div-rho-2}, we have
$\rho_1 \subseteq 24$ or $\rho_3 \subseteq 24$. % is contained in $24$.
%{\color{orange} either $24 \in \tk_{\code}(\constrOne)$ or $24\in \tk_{\code}(\constrThree)$}, 
The above constraints imply that  $\rho_1 \subseteq \{4\}$ or $\rho_3 \subseteq \{4\}$.
%implying that at least one of $\rho_1$ or $\rho_3 \subseteq 4$
%{\color{orange} either $4 \in \tk_{\code}(\constrOne)$ or $4\in \tk_{\code}(\constrThree)$}. 
Hence, $\rho_1 \cup \rho_3 \cup \scfaceT  =\rho_1 \cup \rho_3 \cup \{4\}$
is a subset of $145$ or $346$, and so $\tk_{\code}(\constrOne \cup \constrThree \cup \rim)$
contains 
$145$ or $346$. However, neither $145$ nor $346$ is in $\tk_{\code}(\{2\}) = \tk_{\code}(\spokeTwo)$, contradicting \ref{itm:Div-rho-3}.
Thus, $\Wheel_{\treelink}$ is not a sprocket of $\code_{\treelink}$. 
\end{example} 

As noted in the above example, we will show that 
the code $\code_{\treelink}$ has a wheel.  To do so, we need the following definition.

\begin{definition}
  \label{def:wireWheel} Let $\code$ be a neural code.
  A tuple $\WheelFull \in (\scplexC)^4$ is a \textit{wire wheel} of~$\code$ if the following hold:
  \begin{itemize}
    \item $\Wheel$ satisfies~\ref{itm:Di} and \ref{itm:Dii}, 
  \item $\scfaceT\not\in \code$,
  \item $\LkCfaceT$ is a tree,
  \item for $i=1,2,3$, the set $\spokei \smallsetminus \rim$ 
  has size one and 
  is a vertex of the tree $\LkCfaceT$,
  %$| \spokei \smallsetminus \rim| = 1 $, and 
%  {\color{red} I would like to assume that $|\sigma_i|=1$.  What do you think?  This is how you look for them (for algorithms), right?  And otherwise we need to add a result saying we can assume that $|\sigma_i|=1$.} {\color{blue} I've looked over my code, and I don't make the assumption that $|\sigma_i| = 1$. So I would prefer not to make that assumption. However, I'm stuck on this. I've looked at the two codes with wire wheels, and I'm trying to figure out what the wire wheel is (I made the mistake of not including the wheel explicitly in with the data).}  
%{\color{red} To be investigated: Is there a code with a wire wheel that does NOT have a wire wheel in which the spokes $\spokei$ have size 1?}  {\color{brown} Maybe this question is no longer interesting (Anne).}
  \item the unique path in the tree $\LkCfaceT$ between $\spokeOne \smallsetminus \rim$ and
    $\spokeThree \smallsetminus \rim$ contains $\spokeTwo
    \smallsetminus \rim$.
  \end{itemize}
The intuition behind this name is that wire wheels have thin spokes.
  \end{definition}
  
To prove that wire wheels are wheels, we need the following lemma.

\begin{lemma} \label{lem:wire-wheel-spokes}
If $\WheelFull $ is a wire wheel of a neural code $\code$, then the sets 
$\spokeOne \smallsetminus \rim$, 
$\spokeTwo \smallsetminus \rim$, 
and
$\spokeThree \smallsetminus \rim$ 
are distinct.
\end{lemma}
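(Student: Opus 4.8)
The plan is to prove the three sets are distinct by ruling out each possible coincidence in turn, using the combinatorial conditions that define a wire wheel. Recall that by definition of a wire wheel, each $\spokei \smallsetminus \rim$ is a single vertex $v_i$ of the tree $\LkCfaceT$, and the unique path between $v_1$ and $v_3$ passes through $v_2$. If $v_1 = v_3$, then the path between them is a single vertex, which cannot contain $v_2$ unless $v_2 = v_1 = v_3$; so it suffices to handle the two cases $v_1 = v_2$ (symmetrically $v_2 = v_3$) and $v_1 = v_2 = v_3$ at once, namely: show $v_1 \neq v_2$, show $v_2 \neq v_3$, and show $v_1 \neq v_3$. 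The first two are symmetric, so really there are two things to rule out: a spoke coinciding with the middle spoke, and the two outer spokes coinciding.

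First I would rule out $v_1 = v_3$. Since the path in a tree between a vertex and itself is trivial (just that vertex), the requirement that this path contain $v_2$ forces $v_2 = v_1 = v_3$. So it is enough to derive a contradiction from $v_1 = v_2 = v_3 =: v$. In that case $\spokeOne \smallsetminus \rim = \spokeTwo \smallsetminus \rim = \spokeThree \smallsetminus \rim = \{v\}$, hence $\spokeOne \cup \spokeTwo \cup \spokeThree = \rim \cup \{v\}$ (up to whether $v \in \rim$, but $v$ is a vertex of $\LkCfaceT$ so $v \notin \rim$, and $\rim \cup \{v\} = \spokej \cup \rim$ for each $j$). Now condition~\ref{itm:Diii} gives $\spokej \cup \rim \in \scplexC$, so $\rim \cup \{v\} \in \scplexC$; but condition~\ref{itm:Dii} says $\spokeOne \cup \spokeTwo \cup \spokeThree \cup \rim = \rim \cup \{v\} \notin \scplexC$, a contradiction.

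Next I would rule out $v_1 = v_2$ (the case $v_2 = v_3$ being identical after swapping indices $1 \leftrightarrow 3$, which is legitimate because the roles of spokes $1$ and $3$ are symmetric in the definition of a wire wheel). Suppose $v_1 = v_2 =: v$. Then $\spokeOne \cup \spokeTwo = \rim \cup \{v\}$. By \ref{itm:Di}, $\tk_{\code}(\spokeOne \cup \spokeTwo) = \tk_{\code}(\spokeOne \cup \spokeTwo \cup \spokeThree)$, so every codeword containing $\rim \cup \{v\}$ also contains $\spokeThree$, hence contains $v_3 = \spokeThree \smallsetminus \rim$. Equivalently, in $\LkCfaceT$, every face containing $v$ also contains $v_3$, i.e.\ $v$ and $v_3$ lie in exactly the same faces of the link; in particular the edge $\{v, v_3\}$ of $\LkCfaceT$ must exist (if $v$ is a vertex then $\{v\} \in \LkCfaceT$, forcing $\{v, v_3\} \in \LkCfaceT$), and moreover $v$ has no neighbor in the tree other than possibly $v_3$. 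But then the unique path in $\LkCfaceT$ from $v_1 = v$ to $v_3$ is just the edge $\{v, v_3\}$, which does not contain $v_2 = v$ in its interior — wait, it does contain $v$ as the endpoint $v_1$; the real obstruction is different, so I would instead argue directly: the path from $v_1$ to $v_3$ is required to contain $v_2$; but $v_1 = v_2 = v$, and $v$ is an endpoint of that path, so trivially $v_2$ lies on it — no contradiction yet. The genuine contradiction comes from combining with $v_1 \neq v_3$ (already handled? no — I only handled $v_1 = v_3 \Rightarrow$ all equal): if $v_1 = v_2 = v$ but $v_3 \neq v$, I use instead \ref{itm:Dii}. From $\tk_{\code}(\rim \cup \{v\}) = \tk_{\code}(\spokeOne\cup\spokeTwo\cup\spokeThree)$ and the fact (from \ref{itm:Diii}) that $\spokeOne \cup \rim = \rim \cup \{v\} \in \scplexC$, i.e.\ $\tk_{\code}(\rim \cup \{v\})$ is nonempty, we get $\tk_{\code}(\spokeOne \cup \spokeTwo \cup \spokeThree)$ nonempty, so $\spokeOne \cup \spokeTwo \cup \spokeThree \in \scplexC$; but this set equals $\rim \cup \{v, v_3\}$, and I must still separate it from $\spokeOne \cup \spokeTwo \cup \spokeThree \cup \rim = \rim \cup \{v, v_3\}$ — these are equal, and \ref{itm:Dii} says the latter is \emph{not} in $\scplexC$, contradiction. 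So in every case I reach a contradiction with \ref{itm:Dii} versus \ref{itm:Diii}. I expect the main obstacle to be bookkeeping the exact relationship between the union of the spokes, the union with the rim, and the $\spokej \cup \rim$ — specifically checking that when two spoke-minus-rim singletons coincide, the union $\spokeOne \cup \spokeTwo \cup \spokeThree$ literally equals $\spokeOne \cup \spokeTwo \cup \spokeThree \cup \rim$, so that \ref{itm:Dii} and \ref{itm:Diii} collide; I would write this out carefully using $v_i \notin \rim$ for each $i$.
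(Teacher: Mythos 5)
Your combinatorial core matches the paper's: once two of the $\spokei \smallsetminus \rim$ coincide, conditions \ref{itm:Di}, \ref{itm:Dii}, and \ref{itm:Diii} collide. But your write-up has both an organizational inefficiency and a substantive gap.

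Organizationally, the case split ($v_1 = v_3$ forcing all three equal, versus $v_1 = v_2$ with $v_3$ distinct) is unnecessary, and it is precisely what leads you into the topological dead end you then have to back out of: you correctly observe that when $v_1 = v_2$ is an endpoint of the path, the path condition yields nothing. The paper treats every coinciding pair $\spokej \smallsetminus \rim = \spokek \smallsetminus \rim$, $j < k$, at once. The pair alone already gives $\spokej \cup \spokek \cup \rim = \spokej \cup \rim \in \scplexC$, which feeds into \ref{itm:Di} and contradicts \ref{itm:Dii} with no reference to the tree structure at all.

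Substantively, the final step asserts that $\spokeOne \cup \spokeTwo \cup \spokeThree$ equals $\rim \cup \{v, v_3\}$. This is not true: each $\spokei$ contains $v_i$ and is contained in $\rim \cup \{v_i\}$, but need not contain all of $\rim$, so you only have $\spokeOne \cup \spokeTwo \cup \spokeThree \subseteq \rim \cup \{v,v_3\}$. Your proposed repair (writing things out carefully using $v_i \notin \rim$) does not cure this, because the issue is not whether the $v_i$ lie in $\rim$ but whether $\rim$ lies in the union of spokes. The repair that actually works is to follow the witnessing codeword: any $c \in \tk_{\code}(\rim \cup \{v\})$ (nonempty by \ref{itm:Diii}) already contains $\rim$, and by the trunk equality in \ref{itm:Di} it also contains $\spokeOne \cup \spokeTwo \cup \spokeThree$, so $c \supseteq \spokeOne \cup \spokeTwo \cup \spokeThree \cup \rim$; this places that set in $\scplexC$, contradicting \ref{itm:Dii}. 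With that adjustment your argument goes through and is, modulo the redundant case analysis, exactly the paper's proof.
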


\begin{proof}
Assume for contradiction that 
$(\spokej \smallsetminus \rim)  = 
(\spokek \smallsetminus \rim)$, for some $1 \leq j < k \leq 3$.  It follows that $\spokej \cup \rim = \spokek \cup \rim$.  We also know that $\spokej \cup \rim \in \scplexC $, because $\spokej \smallsetminus \rim$ is a vertex of the link $\LkCfaceT$.  We conclude that $\spokej \cup \spokek \cup \rim = \spokej \cup \rim \in \scplexC $. Hence, there exists a codeword $c \in \code$ such that $c \in \tk_{\code}(\spokej \cup \spokek \cup \rim) $.  
This trunk $ \tk_{\code}(\spokej \cup \spokek \cup \rim) $ is contained in 
$\tk_{\code}(\spokej \cup \spokek )$, which, by~\ref{itm:Di}, equals
      $ \tk_{\code}(\spokeOne \cup \spokeTwo \cup \spokeThree)$.
We conclude that 
$c \in \tk_{\code}(\spokeOne \cup \spokeTwo \cup \spokeThree \cup \rim )$.
But this implies that $\spokeOne \cup \spokeTwo \cup
      \spokeThree \cup \rim \in \scplexC$, which contradicts~\ref{itm:Dii}.
\end{proof}

  \begin{prop}[Every wire wheel is a wheel]\label{prop:wire-wheel}
    If $\Wheel$ is a wire wheel of a neural code $\code$, then $\Wheel$ is a wheel of $\code$.
    Consequently, codes with wire wheels are non-convex.
\end{prop}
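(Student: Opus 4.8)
\textbf{Proof proposal for Proposition~\ref{prop:wire-wheel}.}

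The plan is to verify conditions \ref{itm:Wi}, \ref{itm:Wii}, and \ref{itm:Wiv} directly; by Theorem~\ref{thm:wheel} this suffices. The first two are immediate: since a wire wheel satisfies \ref{itm:Di} and \ref{itm:Dii}, Proposition~\ref{prop:D} gives \ref{itm:Wi} and \ref{itm:Wii} in every realization. The work is all in \ref{itm:Wiv}. The guiding idea is that the wire wheel hypotheses let us recognize $\Wheel$ as a sprocket \emph{after moving to a subcode}, or more precisely, that the tree structure of $\LkCfaceT$ lets us build the witnesses $\constrOne, \constrThree$ along the unique path through $\spokeTwo\smallsetminus\rim$. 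First I would set $a_j := \spokej\smallsetminus\rim$ for $j=1,2,3$; by Lemma~\ref{lem:wire-wheel-spokes} these are three distinct vertices of the tree $T := \LkCfaceT$, with $a_2$ on the $a_1$--$a_3$ path. Removing the edge(s) of $T$ incident to $a_2$ along this path, or rather deleting $a_2$, splits $T$ into components; let $T_1$ be the component (a subtree, together with $a_2$) containing $a_1$ and $T_3$ the one containing $a_3$, chosen so that $a_2 \in T_1 \cap T_3$ but the two subtrees otherwise partition $V(T)\smallsetminus\{a_2\}$. Then define $\constrOne := \rim \cup (\text{something encoding }T_1)$ and $\constrThree := \rim \cup (\text{something encoding }T_3)$ — the natural candidates being $\constrj := \rim$ together with the requirement that the relevant trunks land inside. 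Actually the cleanest route is to define $\constrOne$ and $\constrThree$ so that, at the level of trunks, $\tk_\code(\rim)$ splits as $\tk_\code(\constrOne)\cup\tk_\code(\constrThree)$ according to which side of $a_2$ a codeword's trace in the link lies on, with the overlap $\tk_\code(\constrOne)\cap\tk_\code(\constrThree)$ forced to contain $a_2$ and hence to lie in $\tk_\code(\spokeTwo)$.

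Concretely, I would argue as follows. Take any realization $\realiz$ with $U_\rim$ and $U_{\spokej}\cap U_\rim$ convex for $j=1,2,3$ (if no such realization exists, \ref{itm:Wiv} is vacuous). Since $\rim\notin\code$ but $\spokej\cup\rim\in\scplexC$, the atom structure forces $U_\rim$ to be covered by the sets $U_{\rim\cup\{v\}}$ over vertices $v\in T$; indeed $U_\rim \smallsetminus \bigcup_{v\notin\rim, \rim\cup\{v\}\in\scplexC} U_v$ would be the atom of $\rim$, which is empty. So $U_\rim = \bigcup_{v\in T} (U_\rim\cap U_v)$. Now the tree structure enters: I want to group the vertices $v$ of $T$ into two classes $V_1, V_3$ with $a_1\in V_1$, $a_3\in V_3$, $a_2\in V_1\cap V_3$, and such that whenever $v\in V_1$ and $w\in V_3$ and $U_v\cap U_w\cap U_\rim\neq\varnothing$, the corresponding face of the link $T$ contains $a_2$ (this is where "the $a_1$--$a_3$ path passes through $a_2$" and the fact that $\LkCfaceT$ is a \emph{tree}, so has no cycles, are used — any face of a tree is an edge or vertex, and an edge $\{v,w\}$ with $v,w$ on opposite sides of $a_2$ cannot exist unless one of them equals $a_2$). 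Setting $U_{\constrOne} := \bigcup_{v\in V_1}(U_v\cap U_\rim)$ and $U_{\constrThree}:=\bigcup_{v\in V_3}(U_v\cap U_\rim)$ — and checking these are the regions of honest faces $\constrOne,\constrThree$, or else replacing them by such — gives exactly \ref{itm:Giv-rho-1}, \ref{itm:Giv-rho-2}, \ref{itm:Giv-rho-3}, at which point the argument in the proof of Proposition~\ref{prop:sprocket} (take $p_1\in U_{\spokeOne}\cap U_\rim$, $p_3\in U_{\spokeThree}\cap U_\rim$, the segment $L$ lies in the convex $U_\rim$, hence in $U_{\constrOne}\cup U_{\constrThree}$, hence meets $U_{\constrOne}\cap U_{\constrThree}\subseteq U_{\spokeTwo}$) delivers \ref{itm:Wiv} verbatim.

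The main obstacle — and the step I would spend the most care on — is making the sets $\constrOne$ and $\constrThree$ into genuine faces of $\scplexC$ (equivalently, genuine subsets of $[n]$) whose trunks behave as required, rather than just unions of regions. The subtlety is that $U_{\constrOne}$ as a union over $V_1$ need not be $U_\sigma$ for a single $\sigma$. The right fix is probably to take $\constrj$ not as a union but via Lemma~\ref{lem:ABC}: one wants combinatorial conditions \ref{itm:Div-rho-1}–\ref{itm:Div-rho-3} to hold for a suitable choice, and the natural choice is $\constrOne = \rim\cup a_1$-side-collapse. In fact I suspect the honest statement is that one may take $\constrOne$ and $\constrThree$ to be unions of the spokes-plus-rim along each side — e.g. $\constrOne := \bigcup\{\,\spokek\cup\rim : a_k \in V_1\,\}$ ranging over those original spokes, but since there are only three spokes this is too coarse; more likely one enlarges the code by a morphism/trunk argument. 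Alternatively — and this may be the cleanest — one shows \ref{itm:Wiv} \emph{directly} without passing through the sprocket machinery: the segment $L$ from $p_1$ to $p_3$ lies in $U_\rim$, so its image under "which link-vertices contain this point" traces a connected walk in the ordinary topology; because $U_{\spokeOne}\cap U_\rim$ and $U_{\spokeThree}\cap U_\rim$ are convex and the link is a tree with $a_2$ separating $a_1$ from $a_3$, a connectivity/separation argument on $L$ forces $L$ to enter $U_{a_2}\cap U_\rim \supseteq$ (something in) $U_{\spokeTwo}\cap U_\rim$. Nailing down that last separation argument on the one-dimensional set $L$, using that trees are exactly the connected graphs in which vertex deletion disconnects, is the crux; everything else is bookkeeping with trunks and Lemma~\ref{lem:ABC}.
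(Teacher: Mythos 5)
Your proposal contains two distinct routes, and they should be sharply separated, because one is a dead end and the other is essentially the paper's proof.

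The first route—building sprocket witnesses $\constrOne,\constrThree$ by splitting the tree $\LkCfaceT$ at the vertex $\spokeTwo\smallsetminus\rim$—cannot work as stated, and not for lack of cleverness on your part. The paper's own Example~\ref{ex:code-tree-link} exhibits a code ($\code_{\treelink}$) and a wire wheel $\Wheel_{\treelink}$ for which \emph{no} choice of $\constrOne,\constrThree$ satisfies~\ref{itm:Div-rho}; that is, this wire wheel is provably not a sprocket. So any attempt to show $W(iii)$ by manufacturing sprocket witnesses from the tree decomposition must fail in general, and the obstacle you identify (that $\bigcup_{v\in V_1}(U_v\cap U_\rim)$ need not be $U_\sigma$ for any single face $\sigma$) is genuine and not a bookkeeping issue. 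You should abandon that route entirely rather than try to repair it.

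The second route—the direct argument on the segment $L$—is the right one, and it is the paper's actual proof (this is what the paper calls \emph{order-forcing} in Remark~\ref{rem:order-forcing}). You have correctly identified all the essential ingredients: picking endpoints $p_1\in U_{\spokeOne}\cap U_\rim$ and $p_3\in U_{\spokeThree}\cap U_\rim$, noting $L\subseteq U_\rim$ by convexity, and crucially that $\rim\notin\code$ forces $L$ to be covered by the relatively open sets $U_v\cap L$ over the vertices $v$ of $\LkCfaceT$ (the atom of $\rim$ is empty). What remains—and you correctly flag it as the crux—is the separation argument: because the link is \emph{one-dimensional}, only pairwise intersections of the $U_v\cap L$ occur, so the pattern of which intervals a point of $L$ lies in traces a walk in the graph $\LkCfaceT$; that graph is a tree in which $\spokeTwo\smallsetminus\rim$ lies on the unique path from $\spokeOne\smallsetminus\rim$ to $\spokeThree\smallsetminus\rim$, so the walk must pass through it, giving a point of $L$ in $U_{\spokeTwo}\cap U_\rim$. (Also note $U_{\spokeTwo}\cap U_\rim = U_{\spokeTwo\smallsetminus\rim}\cap U_\rim$, an equality, since $\spokeTwo\smallsetminus\rim$ is a single vertex and $\spokeTwo\cup\rim=(\spokeTwo\smallsetminus\rim)\cup\rim$.) If you commit to this second route and carry out the interval argument carefully, you recover the paper's proof; at present you have the correct architecture but leave exactly the step that does the work only gestured at.
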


\begin{proof} %[Proof of Proposition~\ref{prop:wire-wheel}]
  Let $\WheelFull$ be a wire wheel of $\code$. By the assumption that $\Wheel$ satisfies \ref{itm:Di} and \ref{itm:Dii}, and Proposition~\ref{prop:D}, we have that $\Wheel$ satisfies \ref{itm:Wi} and \ref{itm:Wii}. Thus it suffices to show that $\Wheel$ satisfies \ref{itm:Wiv}.

By Lemma~\ref{lem:wire-wheel-spokes}, the vertices $\spokeell \smallsetminus \rim$ of $\LkCfaceT$ are distinct.
We can therefore relabel the neurons, if necessary, so that
$\spokeell \smallsetminus \rim = \{ \ell \}$ for $\ell = 1, 2, 3$.
Now let $\fullreal$ be a realization of $\code$ 
such that $U_{\rim}$ and $U_{\rim}\cap U_{\spokej}$ are convex for $j=1,2,3$. Note that since $\{\ell \} \cup \rim = \spokeell\cup \rim$ for
$\ell = 1, 2, 3$, we have that $U_{\spokeell} \cap U_{\rim} = U_{\ell} \cap U_{\rim}$. Thus it suffices to show that there is a line segment
        from a point in $U_1 \cap U_{\rim}$ to a point in $U_3 \cap U_{\rim}$ that meets $U_2 \cap U_{\rim}$.  
        
        First, as $1,3 \in \LkCfaceT$, the sets $U_1 \cap U_{\rim}$ and $U_3 \cap U_{\rim}$
        are nonempty, so let $L$ be a line segment from a point in one set to a point in the other.
        The endpoints of $L$ are in the convex set $U_{\rim}$ and so $L$ is contained in $U_{\rim}$.
        As $\rim \notin \code$, the line segment $L$ is covered by the relatively open intervals $U_{j} \cap L$, where $j \in \LkCfaceT$.  Additionally, as the link is one-dimensional, these intervals have only pairwise intersections.  Hence, for each such interval $U_{j} \cap L$ that is nonempty, there exists a point $p_{j}$ that is in that interval and no other intervals.  We conclude that the intersection patterns  of the intervals correspond to a path in the link, which by assumption contains the vertex $2$.  Hence, the desired point $p_2$ in $U_j \cap L$ exists.
\end{proof}

\begin{remark}
\label{rem:order-forcing}
The approach used at the end of the proof of Proposition~\ref{prop:wire-wheel} is called \textit{order-forcing}, and is described further in \cite{jeffs2020order}.  Another application of order-forcing is found in~\cite{chan-nondegenerate}.
\end{remark}

\begin{remark}[One-dimensional links] \label{rem:1-d-link}
Wire wheels involve a non-codeword 
$\rim \in \scplexC\smallsetminus
\code$ for which the link $\LkCfaceT$ is a tree (which is contractible).  If this link is not a tree but still is 
one-dimensional, then the link is non-contractible and so, by Proposition~\ref{prop:summary-prior-results}, the code $\code$ is non-convex due to a local obstruction.  
Proposition~\ref{prop:wire-wheel} is therefore notable for being able to detect cases in which some $\rim$ fails to generate a local obstruction and yet the code is still non-convex.
%Note that if $\code$ is a code and $\rim \in \scplexC\smallsetminus
%\code$ is such that $\LkCfaceT$ is one-dimensional, then $\LkCfaceT$
%is contractible if and only if it is a tree. Thus, if $\LkCfaceT$ is
%not a tree then $\code$ has a local obstruction and so is non-convex.
%Our interest in Proposition~\ref{prop:wire-wheel} is that it provides a
%situation when there is no local obstruction, but the code is still
%non-convex. 
\end{remark}

Next, %we present an application of wheels, in particular wire wheels, in answering 
we use 
Proposition~\ref{prop:wire-wheel} to
answer
a question prompted by recent work on neural codes. A \textit{3-sparse} code is a code for which the neural complex has
dimension at most 2. The authors of \cite{new-obs} %[\S 6]
asked whether every 3-sparse code with no local obstructions, is
convex. 
(The answer is ``yes'' for codes on up to 5 neurons \cite{what-makes,goldrup2020classification}; see Proposition~\ref{prop:five-neur-top-dim} in a later section.)
We answer this question in the negative, by
providing an example of a %3-sparse 
code on 6 neurons that has a wire wheel.

\begin{prop}\label{prop:3-sparse-counterex}
There is a non-convex 3-sparse code with no local obstructions.
\end{prop}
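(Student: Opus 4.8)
The plan is to show that the code $\code_{\treelink}$ from Example~\ref{ex:code-tree-link} is $3$-sparse, has no local obstructions, and has a wheel (hence is non-convex by Theorem~\ref{thm:wheel}). That it is $3$-sparse is immediate: the facets of $\scplexC$ are precisely the five maximal codewords $123,\,145,\,245,\,246,\,346$, each of dimension $2$, so $\dim \scplexC = 2$.

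For ``no local obstructions'' I would invoke Proposition~\ref{prop:summary-prior-results}: it suffices to check $\ManC \subseteq \code_{\treelink}$. By part~(1) of that proposition, $\ManC \subseteq \MaxIntC$, so first I would compute the intersections of the facets. The pairwise intersections are $1,\,2,\,3,\,4,\,24,\,45,\,46$, and any triple intersection (for instance $123 \cap 145 \cap 245$) is $\varnothing$, so $\MaxIntC = \{24,\,45,\,46,\,1,\,2,\,3,\,4,\,\varnothing\}$. Every one of these is a codeword of $\code_{\treelink}$ except $4$, so it remains only to verify that $4$ is not mandatory, i.e.\ that $\Lk{4}{\scplexC}$ is contractible. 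That link has vertices $1,2,3,5,6$ and edges $15,\,25,\,26,\,36$ --- a path on the vertices $1,5,2,6,3$ in that order --- which is contractible. Hence $\ManC \subseteq \MaxIntC \smallsetminus \{4\} \subseteq \code_{\treelink}$, and $\code_{\treelink}$ has no local obstructions.

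For non-convexity I would show that $\Wheel_{\treelink} = (1,2,3,4)$ is a wire wheel of $\code_{\treelink}$ and then apply Proposition~\ref{prop:wire-wheel} together with Theorem~\ref{thm:wheel}. Verifying Definition~\ref{def:wireWheel} is routine: $\spokeOne \cup \spokeTwo \cup \spokeThree = 123$ is a facet, and $\tk_{\code_{\treelink}}(\spokej \cup \spokek) = \{123\} = \tk_{\code_{\treelink}}(123)$ for all $1 \le j < k \le 3$, which gives~\ref{itm:Di}; no codeword has size four, so $1234 \notin \scplexC$, giving~\ref{itm:Dii}; the rim $\rim = 4$ is not a codeword of $\code_{\treelink}$; $\Lk{4}{\scplexC}$ is the path described above, hence a tree; for $\ell = 1,2,3$ the set $\spokeell \smallsetminus \rim = \{\ell\}$ has size one and is a vertex of that tree; and the unique path in the tree between the vertices $1$ and $3$ is $1,5,2,6,3$, which contains $2 = \spokeTwo \smallsetminus \rim$. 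Thus $\Wheel_{\treelink}$ is a wire wheel, hence a wheel of $\code_{\treelink}$, hence $\code_{\treelink}$ is non-convex.

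I do not expect a genuine obstacle here --- the argument is essentially bookkeeping with facet intersections and trunks. The one point requiring care is the tension between the two halves: the unique max-intersection face that is \emph{not} a codeword, namely $4$, must fail to be mandatory (its link must be contractible), and yet that same link must be combinatorially rich enough --- a path with $\spokeTwo \smallsetminus \rim$ lying strictly between $\spokeOne \smallsetminus \rim$ and $\spokeThree \smallsetminus \rim$ --- to support the wire wheel. The code $\code_{\treelink}$ is engineered so that $\Lk{4}{\scplexC}$ is exactly such a path, which is why this example works.
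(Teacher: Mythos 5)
Your argument is correct and follows the paper's own proof essentially step for step: the same example $\code_{\treelink}$, the same verification that the only non-codeword max-intersection face is $4$ with a contractible (path) link, and the same identification of $(1,2,3,4)$ as a wire wheel. The paper concludes non-convexity directly from Proposition~\ref{prop:wire-wheel} without the additional citation of Theorem~\ref{thm:wheel}, but that is a trivial presentational difference.
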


\begin{proof}
We return to the code $\code_{\treelink} := 
\{
{\bf 123},
{\bf 145},
{\bf 245},
{\bf 246},
{\bf 346},
24,45,46,
1,2,3,
\varnothing
\}$
from Example~\ref{ex:code-tree-link}.  
(Recall that $\treelink$ stands for ``tree link''.)
The maximal codewords have length 3, so $\dim(\scplex(\code_{\treelink})) = 2$. The max-intersection faces are $\varnothing, 1,2,3,4,24,45$, and $46$. With the exception of $4$, all of these intersections are codewords of $\code_{\treelink}$. While $4\not\in\code$, the link 
$\Lk{\{4\}}{\scplex(\code_{\treelink})}$
is the following path, which is contractible:
\begin{tikzpicture}[scale=0.5]
        \vertex[label=$1$](p1) at (0,0) {};
        \vertex[label=$5$](p2) at (1,0) {};
        \vertex[label=$2$](p3) at (2,0) {};
        \vertex[label=$6$](p4) at (3,0) {};
        \vertex[label=$3$](p5) at (4,0) {};
        \path [-](p1) edge node[above] {} (p2);
        \path [-](p2) edge node[above] {} (p3);
        \path [-](p3) edge node[above] {} (p4);
        \path [-](p4) edge node[above] {} (p5);
\end{tikzpicture}.
%$1\rightarrow 5 \rightarrow 2 \rightarrow 6 \rightarrow 3$, 
%which is contractible. 
Thus, by 
Proposition~\ref{prop:summary-prior-results}, 
the code 
$\code_{\treelink}$ has no local obstructions. 

Consider $\Wheel_{\treelink} := 
(\spokeOne, \spokeTwo, \spokeThree, \rim) = (1, 2, 3, 4)$.
%(\spokei = \{1\},\spokej= \{2\},\spokek = \{3\}, \rim = \{4\})$. 
First, 
$\Wheel_{\treelink}$ satisfies \ref{itm:Di}, 
as $123 \in \scplex(\code_{\treelink}) $
and 
$ \tk_{\code_{\treelink}}(\spokei \cup \spokej)
= \{123\} = \tk_{\code_{\treelink}}(\spokeOne \cup \spokeTwo \cup \spokeThree)$
for $1 \leq i < j \leq 3$.
%from Proposition
%\ref{prop:D} since $123$ is the only codeword containing more than one
%of $1$, $2$, or $3$. 
Next, $\spokeOne \cup \spokeTwo \cup \spokeThree \cup \rim = 1234 \notin \code_{\treelink}$, so \ref{itm:Dii} also holds.  
%and \ref{itm:Diii} of \ref{prop:D} since $\spokei \cup \rim = 14$,
%$\spokej \cup \rim = 24$, and $\spokek \cup \rim = 34$ are all
%contained in $\scplex(\code_{\treelink})$, but $\spokei \cup \spokej
%\cup \spokek \cup \rim = 1234$ is not. 
Finally, 
we already saw that the link 
$\Lk{\{4\}}{\scplex(\code_{\treelink})}$
is a path (and thus a tree) in which the unique path from vertex 
$( \spokeOne \smallsetminus \scfaceT ) =1$ to 
$( \spokeThree \smallsetminus \scfaceT ) =3$ passes through the vertex 
$( \spokeTwo \smallsetminus \scfaceT ) =2$.
Hence, $\Wheel_{\treelink}$ is a wire wheel, 
and so Proposition~\ref{prop:wire-wheel} implies that $\code_{\treelink}$ is non-convex.
\end{proof}

Recall that, in Definition~\ref{def:wheel}, we distinguished between a wheel of a code and a wheel of a realization. %To be a wheel of a code requires the tuple under consideration to be a common wheel of every realization of the code. 
Being a wheel of a code means being a common wheel of every realization of the code. 
However, 
for proving that certain codes are not convex,
%if we wish to prove that a code is not convex, 
this is %may be 
too strong a requirement. Indeed, it suffices to show that every
%cover 
realization
has a wheel, which may vary from one realization to another. %cover to cover. 
Accordingly, 
we 
now introduce a different type of combinatorial wheel using this more
flexible approach.

\begin{definition} \label{def:wheelFrame}
  Let $\code$ be a code %on $n$ neurons and 
  with neural complex 
  $\scplexC$.
  A triple
$(\spokeOne,\spokeThree, \rim) \in (\scplexC)^3$ is a \textit{wheel frame}
of $\code$ if it satisfies the following conditions: 
\begin{description}
    % ITEM 1
     \item[\namedlabel{itm:D1iii}{F(i)}]$\spokeOne\cup \spokeThree\in \scplexC$, and for all $\omega\subseteq \spokeOne\cup \spokeThree$ such that 
	    \begin{enumerate}
	       \item[(1)] \label{D1iii1} neither $\omega \subseteq \spokeOne$ nor $\omega
                 \subseteq \spokeThree$, and
	       \item[(2)] \label{D1iii2} $\omega \cup \rim \in \scplexC$,
            \end{enumerate}
        it follows that %$\tk_{\code} (\spokeOne \cup \omega)$, 
            $\tk_{\code} (\spokeOne \cup \omega)
            =
            %\subseteq 
            \tk_{\code}(\spokeOne \cup \spokeThree)$ 
            and 
            $\tk_{\code} (\spokeThree \cup \omega)
            =
            %\subseteq 
            \tk_{\code}(\spokeOne \cup \spokeThree)$; 
    % ITEM 2
	\item[\namedlabel{itm:D1iv}{F(ii)}] $\rim\cup \spokeOne\cup \spokeThree\not\in \scplexC$;
    % ITEM 3
    \item[\namedlabel{itm:D1i}{F(iii)}] 
        $\spokeOne\cup \rim\in \scplexC$ and $\spokeThree\cup \rim\in \scplexC$; and
    % ITEM 4
    \item[\namedlabel{itm:D1ii}{F(iv)}]
%      \sout{if $\omega \subseteq [n]$ %\in 2^{[n]}$ 
%      is such that $\rim \subseteq \omega \subseteq [n]\smallsetminus (\spokeOne \cup
%        \spokeThree)$, then $\omega \notin \code$,}
        $\spokeOne \cap \spokeThree = \varnothing$ and 
        $\tk_{\code}(\scfaceT) \subseteq \cup_{i \in \spokeOne \cup \spokeThree} \tk_{\code}(\{i\})$. 
\end{description}
\end{definition}

\begin{theorem}[Wheel frames generate wheels]\label{prop:wire-frame}
  Let $(\spokeOne,\spokeThree, \rim)$ be a wheel frame of a code
  $\code$ on $n$ neurons. Then, for every realization $\realiz$ of $\code$, there exists
  $\scfaceS_2 \subseteq [n]$ %\in 2^{[n]}$ 
  such that $(\spokeOne,\spokeTwo,\spokeThree,\rim)$ is a wheel of
  $\realiz$. Consequently, codes with wheel frames are non-convex.
\end{theorem}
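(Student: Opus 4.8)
The plan is to fix an arbitrary realization $\realiz$ of $\code$ and construct $\spokeTwo$ differently according to whether $U_{\rim}$ is convex. In both cases I will take $\spokeTwo\subseteq\spokeOne\cup\spokeThree$, so that $\spokeOne\cup\spokeTwo\cup\spokeThree=\spokeOne\cup\spokeThree$; then \ref{itm:D1iii} gives $\spokeOne\cup\spokeThree\in\scplexC$ and \ref{itm:D1iv} gives $\spokeOne\cup\spokeThree\cup\rim\notin\scplexC$, which is exactly \ref{itm:Dii}. The only real content will be the trunk equalities in \ref{itm:Di} for the two pairs involving $\spokeTwo$ --- and supplying exactly these is what \ref{itm:D1iii}, applied with $\omega=\spokeTwo$, is designed for --- together with the geometric condition \ref{itm:Wiv}. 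Once \ref{itm:Di} and \ref{itm:Dii} hold, Proposition~\ref{prop:D} upgrades them to \ref{itm:Wi} and \ref{itm:Wii} for $\realiz$.

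The easy case is $U_{\rim}$ \emph{not} convex: set $\spokeTwo:=\spokeOne\cup\spokeThree$. Then $\spokeOne\cup\spokeTwo=\spokeTwo\cup\spokeThree=\spokeOne\cup\spokeThree$, so all three trunks $\tk_{\code}(\spokej\cup\spokek)$ equal $\tk_{\code}(\spokeOne\cup\spokeThree)=\tk_{\code}(\spokeOne\cup\spokeTwo\cup\spokeThree)$, giving \ref{itm:Di}, and \ref{itm:Dii} holds as above. By Proposition~\ref{prop:D}, $(\spokeOne,\spokeTwo,\spokeThree,\rim)$ satisfies \ref{itm:Wi} and \ref{itm:Wii} in $\realiz$, while \ref{itm:Wiv} is vacuous, its hypothesis requiring $U_{\rim}$ to be convex.

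Now suppose $U_{\rim}$ is convex. I would pick $p_1\in U_{\spokeOne}\cap U_{\rim}$ and $p_3\in U_{\spokeThree}\cap U_{\rim}$, both nonempty by \ref{itm:D1i} (and note $\spokeOne,\spokeThree\neq\varnothing$, since $\spokeOne=\varnothing$ would make \ref{itm:D1iv} and \ref{itm:D1i} contradict each other, and likewise for $\spokeThree$), and let $L$ be the segment from $p_1$ to $p_3$, so $L\subseteq U_{\rim}$. Applying Lemma~\ref{lem:ABC} to \ref{itm:D1ii} (with $\varphi=\rim$ and the $\psi_t$ the singletons $\{i\}$, $i\in\spokeOne\cup\spokeThree$) gives $U_{\rim}\subseteq\bigcup_{i\in\spokeOne\cup\spokeThree}U_i$. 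Parametrize $L$ over $[0,1]$ with $L(0)=p_1$, $L(1)=p_3$; the sets $P:=\{t:L(t)\in U_i\text{ for some }i\in\spokeOne\}$ and $Q:=\{t:L(t)\in U_j\text{ for some }j\in\spokeThree\}$ are open, contain $0$ and $1$ respectively, and cover $[0,1]$ because $L\subseteq U_{\rim}\subseteq\bigcup_{i\in\spokeOne\cup\spokeThree}U_i$; by connectedness $P\cap Q\neq\varnothing$. Pick $t^{\ast}\in P\cap Q$, put $q:=L(t^{\ast})$, and set $\spokeTwo:=\{k\in\spokeOne\cup\spokeThree:q\in U_k\}$. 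Then $\spokeTwo\subseteq\spokeOne\cup\spokeThree$ contains some index of $\spokeOne$ and some index of $\spokeThree$, so (using $\spokeOne\cap\spokeThree=\varnothing$ from \ref{itm:D1ii}) neither $\spokeTwo\subseteq\spokeOne$ nor $\spokeTwo\subseteq\spokeThree$; and $q\in U_{\spokeTwo}\cap U_{\rim}$, so $\spokeTwo\cup\rim\in\scplexC$. Thus $\omega=\spokeTwo$ meets the hypotheses of \ref{itm:D1iii}, which yields $\tk_{\code}(\spokeOne\cup\spokeTwo)=\tk_{\code}(\spokeThree\cup\spokeTwo)=\tk_{\code}(\spokeOne\cup\spokeThree)$; with $\spokeOne\cup\spokeTwo\cup\spokeThree=\spokeOne\cup\spokeThree$ this gives \ref{itm:Di}, and \ref{itm:Dii} holds as before, so by Proposition~\ref{prop:D} conditions \ref{itm:Wi} and \ref{itm:Wii} hold in $\realiz$. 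Finally, whenever $U_{\rim}$, $U_{\spokeOne}\cap U_{\rim}$, $U_{\spokeTwo}\cap U_{\rim}$, $U_{\spokeThree}\cap U_{\rim}$ are all convex, the segment $L$ runs from $p_1\in U_{\spokeOne}\cap U_{\rim}$ to $p_3\in U_{\spokeThree}\cap U_{\rim}$ and passes through $q\in U_{\spokeTwo}\cap U_{\rim}$, so \ref{itm:Wiv} holds; hence $(\spokeOne,\spokeTwo,\spokeThree,\rim)$ is a wheel of $\realiz$. The last assertion of the theorem then follows, since a convex realization would have $U_{\rim}$ convex, hence would admit a wheel, contradicting Theorem~\ref{thm:wheel}.

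I expect the only genuine obstacle to be the convex case: recognizing that the correct middle spoke is the set of $(\spokeOne\cup\spokeThree)$-indices active at a point where the two open pieces of $L$ (covered by $\spokeOne$ and by $\spokeThree$) overlap, and then checking that this set straddles both $\spokeOne$ and $\spokeThree$ and satisfies $\spokeTwo\cup\rim\in\scplexC$ --- which is precisely what licenses the appeal to \ref{itm:D1iii}. Everything else, including the non-convex case (where the point is just that \ref{itm:Wiv} is conditioned on $U_{\rim}$ being convex), is routine.
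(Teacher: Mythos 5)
Your proof is correct and follows essentially the same route as the paper's: split on whether $U_\tau$ is convex, use $\sigma_2 := \sigma_1 \cup \sigma_3$ vacuously in the non-convex case, and in the convex case build a segment $L$ from $U_{\sigma_1 \cup \tau}$ to $U_{\sigma_3 \cup \tau}$, use connectedness to find an overlap point $q$ where both spokes are active, read off $\sigma_2$ from $q$, and invoke F(i) with $\omega = \sigma_2$. The only difference is cosmetic: the paper takes $\sigma_2 = \{i_1, i_3\}$ for a single chosen $i_1 \in \sigma_1$ and $i_3 \in \sigma_3$ active at $q$, whereas you take $\sigma_2$ to be the set of \emph{all} indices in $\sigma_1 \cup \sigma_3$ active at $q$; both satisfy the hypotheses of F(i) equally well.
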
 

\begin{proof} Let $\fullreal$ be a realization of $\code$.  Our first task is to construct $\spokeTwo$.

% ***************
%{\bf Case Two:} 
% ***************
We consider two cases.  
%We first consider the case when 
If $U_{\rim}$
is non-convex, 
set
$\spokeTwo := ( \spokeOne \cup \spokeThree )$. 
(In this case, 
the non-convexity of $U_{\rim}$ implies that $\WheelFull$ vacuously satisfies 
\ref{itm:Wiv} with respect to $\realiz$.) 
%That \ref{itm:Wiv} is vacuously true is also why this wheel $\Wheel$ appears to contradict Proposition \ref{prop:sigmacontain}: this case is a situation where the realization $\calU$ is already assumed to be nonconvex, and so there is no need to construct a wheel. It is important to understand that the wheel from Proposition \ref{prop:sigmacontain} is a wheel of a \emph{code}, and so must apply to \emph{every realization} $\calU$ of $\calW$, while the wheel in this case is a wheel of a \emph{realization}, and thus must only apply to that {one particular realization} $\calU$.)  {\color{red} Proposition \ref{prop:sigmacontain} is in a later section.  Adjust comments here accordingly?}
%define $\spokeTwo$ as follows. 
%The sets $\spokeOne$ and $\spokeThree$ are nonempty (otherwise~\ref{itm:D1iv} and~\ref{itm:D1i} 
% do not both hold) and disjoint (by~\ref{itm:D1ii}).
% So, pick 
% $i_1 \in \spokeOne = (\spokeOne \smallsetminus \spokeThree)$ and 
% $i_3 \in \spokeThree = (\spokeThree \smallsetminus \spokeOne)$, and then set 
%$\spokeTwo := \{i_1 \} \cup \{i_3\}$. 

% ***************
%{\bf Case One:} 
% ***************
Now consider the remaining case, that is, when $U_{\rim}$ is convex.
  As $\scplexC$ is the nerve of $\realiz$, 
  \ref{itm:D1iv} and~\ref{itm:D1i} 
  imply that $U_{\spokeOne\cup \rim}$ and
  $U_{\spokeThree\cup \rim}$ are disjoint, nonempty sets.
Let $p_1\in U_{\spokeOne\cup \rim}$ and $p_3\in U_{\spokeThree\cup \rim}$, and 
let $L$ denote the line segment with endpoints $p_1$ and
$p_3$. Then $L\subseteq
U_{\rim}$, because 
$p_1$ and $p_3$ are both in the convex set $U_{\rim}$.

We claim that there exists some $p_2\in L$ between $p_1$ and $p_3$
such that $p_2\not\in U_{\spokej}$ for $j=1,3$. Set $L_j = L\cap
(\bigcup_{i\in \spokej}U_i)$ for $j=1, 3$. By \ref{itm:D1ii}, we have that
$U_{\rim}$ (and thus $L$) is covered by $\{U_i\}_{i\in \spokeOne \cup
  \spokeThree}$, so $L = L_1 \cup L_3$. Both $L_1$ and
$L_3$ are relatively open subsets of the connected set $L$,
and $L_1, L_3 \neq
\varnothing$ (because $p_1\in L_1$ and $p_3\in L_3$), so
%and $L$ is connected, 
it follows that $L_1\cap L_3\neq
\varnothing$. 
We conclude that there exist $i_1\in \spokeOne$, $i_3\in
\spokeThree$, and $p_2\in L \subseteq U_{\rim}$ such that $p_2\in U_{i_1} \cap
U_{i_3}$.  We remark that 
$i_1 \in (\spokeOne \smallsetminus \spokeThree)$ and 
$i_3 \in (\spokeThree \smallsetminus \spokeOne)$, 
%$i_1\neq i_3$ 
because $\spokeOne \cap \spokeThree = \varnothing$ (by~\ref{itm:D1ii}).
Let $\spokeTwo := \{i_1 \} \cup \{i_3\}$.

% ***************
We now claim that $\WheelFull$ is a
wheel of $\realiz$. 
To see this, we first observe that, by construction, $\Wheel$ satisfies~\ref{itm:Wiv} with respect to $\realiz$.   
To complete the proof, we show that $\Wheel$ also satisfies \ref{itm:Di} and \ref{itm:Dii}
(recall Proposition~\ref{prop:D}).
% ***************

%To see that $\Wheel$ also satisfies \ref{itm:Wi}, 
We first show \ref{itm:Di}. 
By construction, 
$\spokeTwo \subseteq (\spokeOne \cup \spokeThree)$. So, 
$\spokeOne \cup \spokeThree= \spokeOne \cup \spokeTwo \cup \spokeThree $ holds, 
which implies the following:
\begin{description}
    % ITEM 1
     \item[\namedlabel{itm:part-a}{(a)}]
     $\spokeOne \cup \spokeTwo \cup \spokeThree$ is in $\scplexC$ (due to \ref{itm:D1iii}), and 
    % ITEM 2
     \item[\namedlabel{itm:part-b}{(b)}]
    $\tk_{\code}(\spokeOne \cup \spokeThree) = \tk_{\code}(\spokeOne \cup \spokeTwo \cup \spokeThree)$. 
\end{description}
Hence, only two trunk conditions in~\ref{itm:Di} are left to verify, and by~\ref{itm:part-b} these conditions are equivalent to the following:
\begin{align} \label{eq:trunk-condition}
 \tk_{\code}(\spokei \cup \spokeTwo) ~=~ \tk_{\code}(\spokeOne \cup \spokeThree) \quad \textrm{for~} i=1,3~. 
\end{align}
%Now we need only show that 
%$\tk_{\code}(\spokei \cup \spokeTwo) = \tk_{\code}(\spokeOne \cup \spokeTwo \cup \spokeThree)$ 
for $i=1,3$. 
If $U_{\rim}$ is non-convex, then the equalities~\eqref{eq:trunk-condition} follow from 
$\spokeTwo = ( \spokeOne \cup \spokeThree )$.  Now consider the case when $U_{\rim}$ is convex.
%In this case, we 
We prove~\eqref{eq:trunk-condition} by 
applying~\ref{itm:D1iii} 
with $\omega = \spokeTwo$, as follows.  
We already saw that $\spokeTwo \subseteq (\spokeOne \cup \spokeThree)$.  
Also, $\spokeTwo \nsubseteq \spokeOne$ and 
$\spokeTwo \nsubseteq \spokeThree$ hold, because 
$i_1 \in (\spokeTwo \smallsetminus \spokeThree)$ and
$i_3 \in (\spokeTwo \smallsetminus \spokeOne)$. 
%As $\spokeTwo
%= \{i_1,i_3\}$ with $i_1\in \spokeOne$ and $i_3\in \spokeThree$ it
%follows that $\spokeTwo \subseteq \spokeOne \cup \spokeThree$ but
%$\spokeTwo \not\subseteq \spokeOne$ or $\spokeThree$. 
Lastly, since
$p_2 \in U_{\spokeTwo} \cap U_{\rim}$ we see that $U_{\spokeTwo}\cap
U_{\rim}\neq \varnothing$, and so (as $\scplexC$ is the nerve of $\realiz$) we have $\spokeTwo \cup \rim\in
\scplexC$. 
Thus, \ref{itm:D1iii} implies the equalities~\eqref{eq:trunk-condition}.  Hence, $\Wheel$ satisfies \ref{itm:Di}.

Finally, $\Wheel$ satisfies \ref{itm:Dii} %\ref{itm:Wii} 
because of~\ref{itm:D1iv},
since $\spokeTwo \subseteq (\spokeOne \cup \spokeThree)$ implies $\rim
\cup \spokeOne \cup \spokeThree = \rim \cup \spokeOne \cup \spokeTwo
\cup \spokeThree$. 
\end{proof}

Next, we show that the code $\code^{\star}$ from Example~\ref{ex:lienshiu} has a wheel frame.

\begin{proposition}[$\code^{\star}$ has a wheel frame]
 \label{prop:original-counterexample-wheel-frame} 
Let
$\code^{\star}$ be the neural code from Example~\ref{ex:lienshiu}. 
If $\calD$ is a code such that
\begin{description}
    % ITEM 1
     \item[\namedlabel{item:code-1}{(1)}] 
     		$\code^{\star}  \subseteq \calD \subseteq \scplexCstar$ and
    % ITEM 2
     \item[\namedlabel{item:code-2}{(2)}] 
		$\calD$ does \uline{not} contain any of the following codewords: $1$, $234$, and $245$,
\end{description}
then $\calD$ contains a wheel frame and thus is non-convex.
In particular, $\code^{\star}$ has a wheel frame. 
\end{proposition}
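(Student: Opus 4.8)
The plan is to produce a single explicit wheel frame and verify the four conditions of Definition~\ref{def:wheelFrame}; non-convexity then follows from Theorem~\ref{prop:wire-frame}, and the last sentence is immediate since $\code^\star$ itself satisfies \ref{item:code-1} and \ref{item:code-2}. I would take $(\spokeOne,\spokeThree,\rim)=(23,\,45,\,1)$. This choice mirrors the Lienkaemper--Shiu picture of Example~\ref{ex:lienshiu}: the non-codeword $1$ is the rim, its region is forced to be covered by $U_2\cup U_3\cup U_4\cup U_5$, and the two ``ends'' $23$ and $45$ of the path $2-3-4-5$ (the link of $1$ in $\scplexCstar$) are the spokes.

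First I would dispatch the three cheap conditions by inspecting the facet list $\{2345,123,134,145\}$ of $\scplexCstar$. For F(iii): $\spokeOne\cup\rim=123$ and $\spokeThree\cup\rim=145$ are facets, hence faces. For F(ii): $\rim\cup\spokeOne\cup\spokeThree=12345$ lies in no facet, so it is not a face. For F(iv): $23\cap45=\varnothing$; and since $1\notin\calD$ while the code is on five neurons, every codeword of $\calD$ containing $1$ is a face of one of the facets $123,134,145$ through $1$ and is not $\{1\}$ itself, so it meets $\{2,3,4,5\}=\spokeOne\cup\spokeThree$; equivalently (Lemma~\ref{lem:ABC}, or directly from the atom characterization) $\tk_\calD(1)\subseteq\bigcup_{i=2}^{5}\tk_\calD(\{i\})$.

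The heart of the proof is F(i). Here $\spokeOne\cup\spokeThree=2345\in\scplexCstar$, and I would enumerate the $\omega\subseteq2345$ satisfying conditions (1) and (2). Since $\omega\cup\{1\}$ must be a face it lies in a facet through $1$ — i.e.\ in $123$, $134$, or $145$ — so $\omega\subseteq23$, $\omega\subseteq34$, or $\omega\subseteq45$; condition~(1) then excludes $\omega\subseteq\spokeOne=23$ and $\omega\subseteq\spokeThree=45$, leaving only $\omega=34$. For this $\omega$, $\spokeOne\cup\omega=234$ and $\spokeThree\cup\omega=345$, so F(i) reduces to the equalities $\tk_\calD(234)=\tk_\calD(2345)$ and $\tk_\calD(345)=\tk_\calD(2345)$. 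Now $2345$ is the unique face of $\scplexCstar$ containing $\{2,3,4,5\}$ and $2345\in\code^\star\subseteq\calD$, so $\tk_\calD(2345)=\{2345\}$; and the only faces of $\scplexCstar$ containing $234$ (resp.\ $345$) are that triangle and $2345$, while the triangle is excluded from $\calD$ by hypothesis~\ref{item:code-2}, so each trunk also collapses to $\{2345\}$. Hence $(23,45,1)$ is a wheel frame of $\calD$, and Theorem~\ref{prop:wire-frame} finishes.

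The step I expect to be the main obstacle is F(i): both showing that $34$ is the \emph{only} $\omega$ one has to handle, and pinning down the two resulting trunks. This is where the specific facet geometry of $\scplexCstar$ and the exclusion of the non-facet triangles from $\calD$ are essential; conditions F(ii)--F(iv) are almost immediate once the four facets of $\scplexCstar$ are written down. (If the exclusion list in \ref{item:code-2} is shorter than what the argument above consumes, one would either enlarge the frame's bookkeeping or choose a second wheel frame to cover the missing case — but the verification of a frame is always the same routine check of these four conditions against the facet list.)
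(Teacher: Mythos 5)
Your proposal takes essentially the same route as the paper: the identical wheel frame $(\scfaceS_1,\scfaceS_3,\scfaceT)=(23,45,1)$, the same quick dispatch of F(ii)--F(iv) from the facet list $\{2345,123,134,145\}$, and the same reduction of F(i) to the single eligible $\omega=34$, with both trunks collapsing to $\{2345\}$ because the triangles $234$ and $345$ are absent from $\calD$. Your closing caveat is in fact a genuine catch: condition~\ref{item:code-2} as printed lists $245$, but your argument --- like the paper's own proof of this proposition --- needs $345\notin\calD$, not $245\notin\calD$. If $345\in\calD$ were allowed, then $345\in\tk_{\calD}(\scfaceS_3\cup\omega)\smallsetminus\tk_{\calD}(\scfaceS_1\cup\scfaceS_3)$, so F(i) fails for $(23,45,1)$; worse, by Proposition~\ref{prop:five-neur-top-dim}\ref{item:counterexample-cpx} such a $\calD$ is actually \emph{convex}, so no wheel frame of any kind could exist. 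Thus the hypothesis is evidently a typo for the list $\{1,234,345\}$ (consistent with Proposition~\ref{prop:five-neur-top-dim}\ref{item:counterexample-cpx} and Remark~\ref{rmk:min-cvx-codes}), and under that reading your verification is complete and correct.
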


\begin{proof}
Assume that $\calD$ satisfies \ref{item:code-1} and~\ref{item:code-2}.  It follows that $\scplexD=\scplexCstar$.

We show that $(\spokeOne, \spokeThree, \rim) = (23, 45, 1)$ is a wheel frame, as follows.  
It is straightforward to check that conditions~\ref{itm:D1iv} and~\ref{itm:D1i} hold.  Condition~\ref{itm:D1ii} is also easy to check (here, the assumption $1 \notin \calD$ is used).  Finally we consider~\ref{itm:D1iii}. 
The only set $\omega \subseteq (\spokeOne \cup \spokeThree) = 2345$ for which 
 $\omega \nsubseteq \spokeOne  = 23$, 
 $\omega \nsubseteq \spokeThree = 45$, and $\omega \cup \scfaceT = \omega \cup \{1\} \in \scplexCstar$ 
 is the set $\omega=34$.  For this set $\omega$, the trunk conditions in~\ref{itm:D1iii}, namely, 
            $\tk_{\code} (\spokeOne \cup \omega)
            =
            %\subseteq 
            \tk_{\code}(\spokeOne \cup \spokeThree)$ 
            and 
            $\tk_{\code} (\spokeThree \cup \omega)
            =
            %\subseteq 
            \tk_{\code}(\spokeOne \cup \spokeThree)$, 
            are readily seen to hold, as (respectively) $234 \notin \calD$ and $345 \notin \calD$.
\end{proof}

We end this section with another example of a code with a wheel frame.

\begin{example} \label{ex:wheel-frame-trivial}
Consider the neural code $\code= \{ {\bf 12}, {\bf 13}, {\bf 23}, 1,2, \emptyset \} $. 
It is straightforward to check that $(\spokeOne, \spokeThree, \rim):= (1,2,3)$ is a wheel frame of $\code$, and so $\code$ is non-convex.   Moreover, by following the proof of Theorem~\ref{prop:wire-frame}, we see that $\WheelFull = (1, 12, 2,3)$ is a wheel of {\em every} realization of $\code$ and hence is a wheel of $\code$ itself. 

We make several observations about this wheel $\Wheel$.  First, \ref{itm:Diii}
(or equivalently \ref{itm:Wiii}) does not hold: $\spokeTwo \cup \tau = 123 \notin \scplexC$. 
Thus, the non-convexity of $\code$ comes from the fact
%This implies 
that $\Wheel$ only vacuously satisfies~\ref{itm:Wiv} (recall Remark~\ref{rmk:extra-condition-not-in-def}). 
Indeed, in every realization $\realiz = \{U_1,U_2,U_3\}$ of $\code$, the set $U_{\rim}= U_3$ is disconnected and hence non-convex:
$U_3 = (U_3 \cap U_1) \cup (U_3 \cap U_2)$, where 
$(U_3 \cap U_1)$ and $(U_3 \cap U_2)$ are open, nonempty, and disjoint. %{\color{orange} Thus, in every realization of $\calC$, the case of $U_{\tau}$ non-convex from the proof of Proposition \ref{prop:wire-frame} applies: the realization is already non-convex, and so there is no need to find a wheel for it.}
This non-convexity can also be seen through local obstructions: the mandatory face $3$ of $\scplexC$ is a non-codeword of $\code$, and so $\code$ has a local obstruction and hence is non-convex (Proposition~\ref{prop:summary-prior-results}).
\end{example}

We revisit the code in Example~\ref{ex:wheel-frame-trivial} in the next section.

\section{Refining the search for wheels} \label{sec:search-2}
In the process of checking whether a code $\code$ has a wheel, it is natural to ask whether we must consider every quadruple $(\spokeOne,\spokeTwo,\spokeThree,\rim)$ of faces
in $\scplexC$. 
Fortunately, the answer is ``no''.  This section features several results in this direction. 
First, for sprockets and wire wheels, there are no containments among the spokes 
 $\spokei$ (Corollary~\ref{cor:no-spoke-containment-sprocket-wire-wheel}) 
 and the rim $\rim$ is never a codeword of~$\code$
 (Propositions~\ref{prop:missingrim} and~\ref{prop:wire-wheel-max-intersection}). 
Further constraints on rims are motivated by local obstructions, which we recall can be detected at the level of $\scplexC$, rather than a restriction of $\scplexC$, through some max-intersection face.  
We show that combinatorial wheels also 
``bubble up'', that is, a code with a sprocket, wire wheel, or wheel frame also has a sprocket, wire wheel, or wheel frame in which the rim $\rim$ is a max-intersection face (Propositions~\ref{prop:bubble}, \ref{prop:wire-wheel-max-intersection}, and~\ref{prop:wheel-frame-max-intersection}). Finally, we conjecture that this bubble-up property 
generalizes to all wheels (Conjecture~\ref{conj:bubbleup}).  

\subsection{Precluding containments among spokes} \label{sec:no-containments-spokes}
Consider a wheel %$\WheelFull$ 
of a code $\code$ that violates~\ref{itm:Diii} (for instance, the wheel in Example~\ref{ex:wheel-frame-trivial}).  
We have seen that such a wheel 
is ``extraneous'' in the sense that, 
in \uline{every} realization of $\code$, 
%in every realization $\fullreal$ of the code, the set $U_{\rim}$ or one of the sets $ U_{\spokej} \cap U_{\rim} $ with $1 \leq j \leq 3$ is non-convex
condition \ref{itm:Wiv} holds vacuously 
(recall Remark~\ref{rmk:extra-condition-not-in-def}).  
We now show that, for non-extraneous wheels, 
there are no containments among the three spokes $\spokei$. % in a wheel.

\begin{prop}[No containments among spokes if~\ref{itm:Diii} holds]\label{prop:sigmacontain}
Let $\WheelFull$ be a wheel of a neural code $\code$.
If $\Wheel$ satisfies~\ref{itm:Diii}, 
then $\spokej \not\subseteq \spokek$ for 
distinct $j,k \in \{1,2,3\}$.
%all $1 \leq j,k \leq 3$ with $j\neq k$.
 \end{prop}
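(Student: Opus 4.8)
The plan is to prove the contrapositive by assuming a containment $\spokej \subseteq \spokek$ among two spokes and deriving a contradiction with the wheel axioms, using condition~\ref{itm:Diii} to rule out the degenerate cases. First I would observe that it suffices to handle the three possible inclusions among $\{1,2,3\}$ — but by the symmetry noted in Remark~\ref{rmk:role-3-spokes} (the roles of $\spokeOne$ and $\spokeThree$ are interchangeable, and in fact~\ref{itm:Wi} treats all three spokes symmetrically), the only genuinely distinct cases are $\spokeTwo \subseteq \spokeOne$ (equivalently $\spokeTwo \subseteq \spokeThree$, or $\spokeOne \subseteq \spokeThree$) versus $\spokeOne \subseteq \spokeTwo$. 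Actually, combining~\ref{itm:Wi} with a containment collapses things: if $\spokej \subseteq \spokek$, then $U_{\spokek} \subseteq U_{\spokej}$, so $U_{\spokej} \cap U_{\spokek} = U_{\spokek}$, and by~\ref{itm:Wi} this equals $U_{\spokeOne} \cap U_{\spokeTwo} \cap U_{\spokeThree}$.

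The key step is then to exploit the remaining spoke together with the rim. Suppose, after relabeling using the $\{1,3\}$-symmetry if needed, that $\spokej \subseteq \spokek$ with $\{j,k\}$ not equal to $\{1,3\}$ forced; I would split on whether the "free" spoke is the middle one. In the case $\spokeOne \subseteq \spokeTwo$ (so $U_{\spokeOne} \cap U_{\spokeTwo} \cap U_{\spokeThree} = U_{\spokeTwo} \cap U_{\spokeThree}$, and also $U_{\spokeOne} \supseteq U_{\spokeTwo}$), note that by~\ref{itm:Wi} with $j=1,k=3$ we get $U_{\spokeOne}\cap U_{\spokeThree} = U_{\spokeOne}\cap U_{\spokeTwo}\cap U_{\spokeThree}$, hence $U_{\spokeOne} \cap U_{\spokeThree} \subseteq U_{\spokeTwo}$. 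Now~\ref{itm:Diii} guarantees $U_{\spokeThree}\cap U_{\rim} \neq \varnothing$ and $U_{\spokeOne}\cap U_{\rim}\neq\varnothing$; I want to push an intersection into $U_{\spokeOne}\cap U_{\spokeTwo}\cap U_{\spokeThree}\cap U_{\rim}$ to contradict~\ref{itm:Wii}. The cleanest route is the combinatorial one: translate everything via Proposition~\ref{prop:D} into the trunk language of~\ref{itm:Di},~\ref{itm:Dii},~\ref{itm:Diii}, where $\spokej \subseteq \spokek$ gives $\spokej \cup \spokek = \spokek$ and hence $\tk_{\code}(\spokek) = \tk_{\code}(\spokej \cup \spokek) = \tk_{\code}(\hub)$ by~\ref{itm:Di}. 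So $\tk_{\code}(\spokek) = \tk_{\code}(\spokeOne \cup \spokeTwo \cup \spokeThree)$, which since $\spokek \subseteq \spokeOne\cup\spokeTwo\cup\spokeThree$ means no codeword containing $\spokek$ is blocked by the other two spokes.

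From here the contradiction with~\ref{itm:Dii} and~\ref{itm:Diii} should follow quickly. Take a codeword $c \in \tk_{\code}(\spokek \cup \rim)$ — this trunk is nonempty because $\spokek \cup \rim \in \scplexC$ by~\ref{itm:Diii} (as $k \in \{1,2,3\}$). Then $c \in \tk_{\code}(\spokek) = \tk_{\code}(\spokeOne\cup\spokeTwo\cup\spokeThree)$, so $\spokeOne \cup \spokeTwo \cup \spokeThree \subseteq c$; combined with $\rim \subseteq c$ (from $c \in \tk_{\code}(\rim)$) this gives $\spokeOne\cup\spokeTwo\cup\spokeThree\cup\rim \subseteq c \in \code$, hence $\spokeOne\cup\spokeTwo\cup\spokeThree\cup\rim \in \scplexC$, contradicting~\ref{itm:Dii}. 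This single argument handles every containment $\spokej \subseteq \spokek$ uniformly, since all that was used is that $k \in \{1,2,3\}$ (so that $\spokek \cup \rim \in \scplexC$ by~\ref{itm:Diii}) and the~\ref{itm:Di}-identity $\tk_{\code}(\spokej \cup \spokek) = \tk_{\code}(\hub)$.

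I expect the main (and only real) obstacle to be bookkeeping: making sure~\ref{itm:Diii} is genuinely invoked — it is what supplies $\spokek \cup \rim \in \scplexC$ and thus a codeword $c$ to run the argument on — and confirming that the~\ref{itm:Di} trunk identity $\tk_{\code}(\spokej \cup \spokek) = \tk_{\code}(\hub)$ does hold for the relevant pair (it does, for every $1\le j<k\le 3$, directly from~\ref{itm:Di}). No case analysis beyond noting $\spokej \cup \spokek = \spokek$ when $\spokej \subseteq \spokek$ is actually needed; the proof is symmetric in which of the three spokes is contained in which. A brief sanity check against Example~\ref{ex:wheel-frame-trivial}, where $\Wheel = (1,12,2,3)$ has $\spokeOne = 1 \subseteq 12 = \spokeTwo$ and indeed fails~\ref{itm:Diii}, confirms the hypothesis~\ref{itm:Diii} is essential and the statement is sharp.
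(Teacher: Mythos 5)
Your proof is correct and, once you arrive at the ``cleanest route,'' it is exactly the paper's argument: pick $c\in\tk_{\code}(\spokek\cup\rim)$ (nonempty by~\ref{itm:Diii}), observe $\spokej\cup\spokek=\spokek$, invoke the~\ref{itm:Di} trunk identity to get $\hub\subseteq c$, and conclude $\hub\cup\rim\in\scplexC$, contradicting~\ref{itm:Dii}; the symmetry/case discussion and the realization-level detour in your first two paragraphs are superfluous and can be deleted without loss.
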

 
 \begin{proof}
Assume for contradiction that $\Wheel$ is a wheel of $\code$ such 
that~\ref{itm:Diii} holds and also $\spokej \subseteq \spokek$ for some 
%$1\leq j, k\leq 3$ with 
$j\neq k$. Note that \ref{itm:Di} and \ref{itm:Dii} also hold, by Proposition~\ref{prop:D}. 

By~\ref{itm:Diii}, we have that $\scfaceS_k \cup \scfaceT \in \scplexC$. 
So, there exists $c\in \code$ such that $\spokek \cup \rim \subseteq c$, and so $\spokej\cup \rim\subseteq c$ also holds (because $\spokej\subseteq \spokek$). Thus, $c$ contains $\spokej \cup \spokek$. So, $c\in \tk_{\code}(\spokej \cup \spokek) = \tk_{\code}\hub$, where the equality is by~\ref{itm:Di}. 
Hence, $c \in \tk_{\code}(\spokeOne \cup \spokeTwo \cup \spokeThree \cup \rim)$, which implies that $\spokeOne \cup \spokeTwo \cup \spokeThree \cup \rim \in \scplexC$. But this contradicts \ref{itm:Dii}.
 \end{proof}

Next, we apply Proposition~\ref{prop:sigmacontain} to sprockets and wire wheels.

\begin{corollary}[No containments among spokes of sprockets and wire wheels]\label{cor:no-spoke-containment-sprocket-wire-wheel}
If $\WheelFull$ is a sprocket or a wire wheel of a neural code $\code$, 
then $\spokej \not\subseteq \spokek$ for 
distinct $j,k \in \{1,2,3\}$.
\end{corollary}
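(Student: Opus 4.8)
The plan is to reduce Corollary~\ref{cor:no-spoke-containment-sprocket-wire-wheel} to Proposition~\ref{prop:sigmacontain}, whose hypothesis is that the wheel satisfies~\ref{itm:Diii}. So the entire content of the proof is: \emph{every sprocket and every wire wheel satisfies~\ref{itm:Diii}}. Once that is established, the conclusion that $\spokej \not\subseteq \spokek$ for distinct $j,k$ is immediate from Proposition~\ref{prop:sigmacontain}, since both sprockets and wire wheels are wheels of $\code$ (by Propositions~\ref{prop:sprocket} and~\ref{prop:wire-wheel}, respectively).

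First I would handle the sprocket case. A sprocket is, by Definition~\ref{def:sprocket}, a partial-wheel that in addition satisfies~\ref{itm:Div-rho}; and a partial-wheel, by Definition~\ref{def:partialWheel}, satisfies \ref{itm:Di}, \ref{itm:Dii}, and \ref{itm:Diii} (which is exactly the condition $\spokej \cup \rim \in \scplexC$ for $j=1,2,3$). So sprockets satisfy~\ref{itm:Diii} essentially by definition. Next I would handle the wire wheel case. By Definition~\ref{def:wireWheel}, a wire wheel satisfies~\ref{itm:Di} and~\ref{itm:Dii}, and additionally: $\rim \notin \code$, $\LkCfaceT$ is a tree, and for $i=1,2,3$ the set $\spokei \smallsetminus \rim$ has size one and is a vertex of the tree $\LkCfaceT$. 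The last point is the key: if $\{v_i\} := \spokei \smallsetminus \rim$ is a vertex of $\LkCfaceT$, then by definition of link, $\{v_i\} \cup \rim \in \scplexC$; and since $\spokei \cup \rim = (\spokei \smallsetminus \rim) \cup \rim = \{v_i\} \cup \rim$, we get $\spokei \cup \rim \in \scplexC$ for $i=1,2,3$, which is precisely~\ref{itm:Diii}.

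With both cases in hand, the argument closes: a sprocket or wire wheel $\Wheel$ of $\code$ is a wheel of $\code$ satisfying~\ref{itm:Diii}, so Proposition~\ref{prop:sigmacontain} applies and gives $\spokej \not\subseteq \spokek$ for distinct $j,k \in \{1,2,3\}$. I do not anticipate a genuine obstacle here; the only thing requiring a moment of care is the wire-wheel case, where one must remember that ``$\spokei \smallsetminus \rim$ is a vertex of $\LkCfaceT$'' unpacks (via the definition of the link of a face) to the membership $\spokei \cup \rim \in \scplexC$, which is the substance of~\ref{itm:Diii}. Everything else is bookkeeping and an appeal to the already-proved Proposition~\ref{prop:sigmacontain}.
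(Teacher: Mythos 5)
Your proof is correct and follows essentially the same route as the paper's: reduce to Proposition~\ref{prop:sigmacontain} by noting (via Propositions~\ref{prop:sprocket} and~\ref{prop:wire-wheel}) that sprockets and wire wheels are wheels, then verify \ref{itm:Diii} -- immediately by definition for sprockets, and by unpacking the link condition for wire wheels. No issues.
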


\begin{proof}
Sprockets and wire wheels are wheels (Proposition~\ref{prop:sprocket} and~\ref{prop:wire-wheel}).  So, by 
Proposition~\ref{prop:sigmacontain}, we need only show that sprockets and wire wheels satisfy~\ref{itm:Diii}.  Sprockets satisfy~\ref{itm:Diii} by definition. 
Now assume $\WheelFull$ is a wire wheel.  By definition, $\spokeell \smallsetminus \rim$, for $ \ell =1,2,3$, is a vertex of the link $\LkCfaceT$.  Hence, $\spokeell \cup \rim$ is a face of $\scplexC$, and so~\ref{itm:Diii} holds.
\end{proof}

Corollary~\ref{cor:no-spoke-containment-sprocket-wire-wheel} 
does not extend to wheels generated by wheel frames.  Indeed, we saw such a wheel in Example~\ref{ex:wheel-frame-trivial} (in that wheel, $\spokeOne = 1 \subseteq \spokeTwo = 12$).  
Moreover, that example showed that, 
for wheels generated by wheel frames, \ref{itm:Diii} need not hold and so Proposition~\ref{prop:sigmacontain} does not apply.  
Nevertheless, the ``outer spokes'' $\spokeOne$, $\spokeThree$ of a wheel frame are guaranteed to \uline{not} contain each other, as follows.

\begin{proposition}[No containments among spokes $\spokeOne$ and $\spokeThree$ of  wheel frames]\label{prop:no-spoke-1-3-containment-wheel-frame}
If $(\spokeOne, \spokeThree, \rim)$ is a wheel frame of a code $\code$, then 
$\spokeOne \nsubseteq \spokeThree$
and 
$\spokeThree \nsubseteq \spokeOne$. 
\end{proposition}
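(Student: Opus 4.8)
The statement asserts that for a wheel frame $(\spokeOne, \spokeThree, \rim)$, neither outer spoke contains the other. The plan is to argue by contradiction: suppose, without loss of generality, that $\spokeOne \subseteq \spokeThree$. The key observation is that condition~\ref{itm:D1ii} already builds in the disjointness $\spokeOne \cap \spokeThree = \varnothing$, so the containment $\spokeOne \subseteq \spokeThree$ would force $\spokeOne = \varnothing$. Thus the whole proof reduces to ruling out the possibility that one of the outer spokes is empty.

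So the real work is: \textbf{show that $\spokeOne \neq \varnothing$ (and symmetrically $\spokeThree \neq \varnothing$) for a wheel frame.} First I would examine what~\ref{itm:D1i} says when $\spokeOne = \varnothing$: it becomes $\rim = \spokeOne \cup \rim \in \scplexC$ and $\spokeThree \cup \rim \in \scplexC$. Next, look at~\ref{itm:D1iv}: $\rim \cup \spokeOne \cup \spokeThree = \rim \cup \spokeThree \notin \scplexC$. But we just said $\spokeThree \cup \rim \in \scplexC$ --- a direct contradiction. That handles $\spokeOne = \varnothing$, and by the symmetric argument (swapping the roles of $\spokeOne$ and $\spokeThree$ in~\ref{itm:D1i} and~\ref{itm:D1iv}) the case $\spokeThree = \varnothing$ is excluded as well. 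Combining: if $\spokeOne \subseteq \spokeThree$ then $\spokeOne = \spokeOne \cap \spokeThree = \varnothing$ (using~\ref{itm:D1ii}), contradicting $\spokeOne \neq \varnothing$; and likewise for $\spokeThree \subseteq \spokeOne$.

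I expect the \emph{main obstacle}, if any, to be making sure I've correctly parsed~\ref{itm:D1i} and~\ref{itm:D1iv} --- in particular that~\ref{itm:D1i} genuinely guarantees $\spokeThree \cup \rim \in \scplexC$ with no side hypothesis, so that the clash with~\ref{itm:D1iv}'s $\rim \cup \spokeOne \cup \spokeThree \notin \scplexC$ is unconditional. This is immediate from the definitions as written, so the proof should be short: set up the contradiction, invoke disjointness from~\ref{itm:D1ii} to reduce to emptiness of a spoke, then derive the contradiction between~\ref{itm:D1i} and~\ref{itm:D1iv}. No topology, no realizations, and no appeal to Proposition~\ref{prop:D} or Theorem~\ref{prop:wire-frame} is needed --- it is purely a combinatorial consistency check on the four defining conditions of a wheel frame.
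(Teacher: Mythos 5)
Your proposal is correct and follows exactly the paper's approach: use the disjointness $\spokeOne \cap \spokeThree = \varnothing$ from~\ref{itm:D1ii} to reduce the claim to showing neither spoke is empty, then derive a direct contradiction between~\ref{itm:D1i} and~\ref{itm:D1iv} when one spoke is assumed empty. The paper's one-line justification (``if $\spokeOne = \emptyset$, then \ref{itm:D1iv} and~\ref{itm:D1i} can not both hold'') is precisely the contradiction you spell out between $\spokeThree \cup \rim \in \scplexC$ and $\rim \cup \spokeThree \notin \scplexC$.
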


\begin{proof}
\ref{itm:D1ii} implies that $\spokeOne \cap \spokeThree = \emptyset$, so (by symmetry) it suffices to show that $\spokeOne \neq \emptyset$.  Indeed, if $\spokeOne = \emptyset$, then \ref{itm:D1iv} and~\ref{itm:D1i} can not both hold.
\end{proof}

\subsection{Constraints on sprockets} \label{sec:sprocket-constraints}

The results in this subsection simplify the search for sprockets.   Our first result states that we need only check those quadruples $(\spokeOne,\spokeTwo,\spokeThree,\rim)$ for which $\rim\not\in \code$.

 \begin{prop}[Rims of sprockets are non-codewords]\label{prop:missingrim}
If $\WheelFull$ is a sprocket of a neural code $\code$, then $\rim\not\in \code$.
 \end{prop}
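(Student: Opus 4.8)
The plan is to argue by contradiction: suppose $\Wheel = (\spokeOne, \spokeTwo, \spokeThree, \rim)$ is a sprocket of $\code$ with $\rim \in \code$, and derive a contradiction with condition \ref{itm:Dii}. Since $\Wheel$ is a sprocket, it is in particular a partial-wheel, so it satisfies \ref{itm:Di}, \ref{itm:Dii}, and \ref{itm:Diii}, and there exist witnesses $\constrOne, \constrThree \in \scplexC$ satisfying \ref{itm:Div-rho-1}, \ref{itm:Div-rho-2}, and \ref{itm:Div-rho-3}. The idea is that if $\rim$ is itself a codeword, then $\rim \in \tk_{\code}(\rim)$, and I would like to push this codeword through the sprocket conditions to produce a codeword containing $\spokeOne \cup \spokeTwo \cup \spokeThree \cup \rim$.

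First I would observe that $\rim \in \code$ means $\rim \in \tk_{\code}(\rim)$. By \ref{itm:Div-rho-2}, $\tk_{\code}(\rim) \subseteq \tk_{\code}(\constrOne) \cup \tk_{\code}(\constrThree)$, so $\rim$ contains $\constrOne$ or $\constrThree$; by the symmetry in the roles of the two witnesses (mirroring the $\spokeOne \leftrightarrow \spokeThree$ symmetry in the sprocket definition) assume without loss of generality $\constrThree \subseteq \rim$. Then $\constrThree \cup \rim = \rim$. Next I want to bring in $\constrOne$: I should find a codeword containing $\constrOne \cup \rim$ so that \ref{itm:Div-rho-3} can be applied. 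Here I would use \ref{itm:Diii}, which gives $\spokeOne \cup \rim \in \scplexC$, hence there is a codeword $c \in \tk_{\code}(\spokeOne \cup \rim)$. By \ref{itm:Div-rho-1} (with $j=1$), $\tk_{\code}(\spokeOne \cup \rim) \subseteq \tk_{\code}(\constrOne)$, so $c \supseteq \constrOne$; and since $c \supseteq \spokeOne \cup \rim \supseteq \rim \supseteq \constrThree$, we have $c \supseteq \constrOne \cup \constrThree \cup \rim$. Therefore $c \in \tk_{\code}(\constrOne \cup \constrThree \cup \rim) \subseteq \tk_{\code}(\spokeTwo)$ by \ref{itm:Div-rho-3}, so $c \supseteq \spokeTwo$. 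Now $c$ contains $\spokeOne$, $\spokeTwo$, and $\rim$; to also get $\spokeThree$ I would use \ref{itm:Di}: $c$ contains $\spokeOne \cup \spokeTwo$, so $c \in \tk_{\code}(\spokeOne \cup \spokeTwo) = \tk_{\code}(\spokeOne \cup \spokeTwo \cup \spokeThree)$, whence $c \supseteq \spokeOne \cup \spokeTwo \cup \spokeThree$. Combined with $c \supseteq \rim$, this gives $\spokeOne \cup \spokeTwo \cup \spokeThree \cup \rim \subseteq c \in \code$, so $\spokeOne \cup \spokeTwo \cup \spokeThree \cup \rim \in \scplexC$, contradicting \ref{itm:Dii}.

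I expect the only delicate point is the case analysis from \ref{itm:Div-rho-2}: one must handle both $\constrOne \subseteq \rim$ and $\constrThree \subseteq \rim$, but the sprocket conditions \ref{itm:Div-rho-1}--\ref{itm:Div-rho-3} are symmetric under swapping $\spokeOne \leftrightarrow \spokeThree$ and $\constrOne \leftrightarrow \constrThree$ (with $\spokeTwo$ and $\rim$ fixed), so it genuinely suffices to treat one case. A small subtlety is making sure that in the case $\constrOne \subseteq \rim$ one correspondingly uses \ref{itm:Div-rho-1} with $j=3$ and \ref{itm:Diii} for $\spokeThree \cup \rim$ to produce the codeword $c$; everything else goes through verbatim. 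No step requires anything beyond the definitions and the fact that trunks are monotone under reverse inclusion, so this should be a short argument once the symmetry is noted.
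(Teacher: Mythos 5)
Your proof is correct and follows essentially the same line as the paper's: both extract from condition S(iii)(2) (via $\rim\in\tk_{\code}(\rim)$) that one witness $\constri$ is contained in $\rim$, then use P(iii) together with S(iii)(1) for the \emph{other} index to build a codeword $c$, push it through S(iii)(3) to get $\spokeTwo\subseteq c$, and finally invoke P(i) to conclude $\spokeOne\cup\spokeTwo\cup\spokeThree\cup\rim\subseteq c$, contradicting P(ii). The only cosmetic difference is which of $\constrOne,\constrThree$ you take to be inside $\rim$ (you pick $\constrThree$, the paper picks $\constrOne$), a relabeling that your correct appeal to the $1\leftrightarrow 3$ symmetry of the sprocket conditions handles cleanly.
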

 
 \begin{proof}
Assume for contradiction that $\WheelFull$ is a sprocket of $\code$ with $\rim\in \code$. 
By definition and Proposition~\ref{prop:D}, $\Wheel$ satisfies
 \ref{itm:Di}, \ref{itm:Dii}, and~\ref{itm:Diii}.
Let $\constrOne$, $\constrThree$ 
be witnesses for $\Wheel$, as in~\ref{itm:Div-rho}.  

Next, $\rim \in \code$ implies that $\rim \in \tk_{\code}(\rim)$. Hence, by \ref{itm:Div-rho-2}, 
we have $\rim \in \tk_{\code}(\constrOne)$ or $\rim \in \tk_{\code}(\constrThree)$. 
By symmetry, we may assume that $\rim \in \tk_{\code}(\constrOne)$ or, equivalently, $\constrOne \subseteq \rim$.
  
By \ref{itm:Diii}, there exists a codeword $c \in \code$ 
such that $c\in \tk_{\code}(\spokeThree \cap \rim)$. 
Then by \ref{itm:Div-rho-1} we have %$\constrThree\subseteq c$ 
$c\in \tk_{\code}(\constrThree)$, and thus %$\constrThree\cup \rim \subseteq c$.  
$c\in \tk_{\code}(\constrThree \cup \rim)$. 
It follows that 
$c\in \tk_{\code}(\constrOne \cup \constrThree \cup \rim)$
(because $\constrOne \subseteq \rim$), 
which, by \ref{itm:Div-rho-3}, implies that $c\in \tk_{\code}(\spokeTwo)$.
  
%Thus $c$ is a codeword that contains both $\spokeThree$ and $\spokeTwo$. 
Thus, $c\in \tk_{\code}(\spokeTwo)\cap \tk_{\code}(\spokeThree) = \tk_{\code}(\spokeTwo \cup \spokeThree)$. 
Then \ref{itm:Di} implies that 
%$c$ must also contain $\hub :=\spokeOne \cup \spokeTwo\cup \spokeThree$  
$c\in \tk_{\code}\hub$.
But then 
%$c$ contains both $\hub$ and $\rim$, so $c$ contains $\hub \cup \rim$, 
$c\in \tk_{\code}\hub \cap \tk_{\code}(\rim) = \tk_{\code}(\spokeOne \cup \spokeTwo \cup \spokeThree \cup \rim)$,
which contradicts \ref{itm:Dii}.
 \end{proof}

The next result states that the witnesses $\constrOne$ and $ \constrThree$ for a sprocket, from~\ref{itm:Div-rho} in the definition of a sprocket, can not be equal.

\begin{prop}[Witnesses for a sprocket are distinct]\label{prop:rhosnotequal}
Let $\Wheel $ be a sprocket of a neural code $\code$.
If $\constrOne, \constrThree \in \scplexC $ are witnesses for the sprocket $\Wheel$ (as in~\ref{itm:Div-rho}), then $\constrOne \neq \constrThree$. 
 \end{prop}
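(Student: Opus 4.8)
The plan is to argue by contradiction: assume $\Wheel = \WheelFull$ is a sprocket of $\code$ with witnesses $\constrOne = \constrThree =: \constr$. I would first invoke Proposition~\ref{prop:D} to record that $\Wheel$ satisfies \ref{itm:Di}, \ref{itm:Dii}, and \ref{itm:Diii}. The idea is that when the two witnesses coincide, conditions \ref{itm:Div-rho-2} and \ref{itm:Div-rho-3} collapse: \ref{itm:Div-rho-2} becomes $\tk_{\code}(\rim) \subseteq \tk_{\code}(\constr)$, i.e.\ $\constr \subseteq \rim$; and then \ref{itm:Div-rho-3} becomes $\tk_{\code}(\constr \cup \rim) = \tk_{\code}(\rim) \subseteq \tk_{\code}(\spokeTwo)$, i.e.\ $\spokeTwo \subseteq \rim$ on every codeword containing $\rim$ — more precisely $\tk_{\code}(\rim)\subseteq \tk_{\code}(\spokeTwo)$.

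Next I would produce a codeword to derive the contradiction. By \ref{itm:Diii} we have $\spokeOne \cup \rim \in \scplexC$, so there is a codeword $c \in \code$ with $\spokeOne \cup \rim \subseteq c$, i.e.\ $c \in \tk_{\code}(\spokeOne \cup \rim) \subseteq \tk_{\code}(\rim)$. From the collapsed \ref{itm:Div-rho-3} we get $c \in \tk_{\code}(\spokeTwo)$. Also $c \in \tk_{\code}(\spokeOne)$ since $\spokeOne \subseteq c$. Hence $c \in \tk_{\code}(\spokeOne)\cap \tk_{\code}(\spokeTwo) = \tk_{\code}(\spokeOne \cup \spokeTwo)$, which by \ref{itm:Di} equals $\tk_{\code}(\spokeOne \cup \spokeTwo \cup \spokeThree)$. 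Therefore $c \in \tk_{\code}(\spokeOne \cup \spokeTwo \cup \spokeThree)$. But $\rim \subseteq c$ as well, so $c \in \tk_{\code}(\spokeOne \cup \spokeTwo \cup \spokeThree \cup \rim)$, forcing $\spokeOne \cup \spokeTwo \cup \spokeThree \cup \rim \in \scplexC$, which contradicts \ref{itm:Dii}. This completes the argument.

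The only step that requires a little care — and which I expect to be the main (minor) obstacle — is the clean translation of \ref{itm:Div-rho-2} and \ref{itm:Div-rho-3} when $\constrOne = \constrThree$: one should note that $\tk_{\code}(\constr)\cup \tk_{\code}(\constr) = \tk_{\code}(\constr)$ and that $\constr \cup \constr \cup \rim = \constr \cup \rim$, and then that $\tk_{\code}(\rim)\subseteq \tk_{\code}(\constr)$ gives $\tk_{\code}(\constr \cup \rim) = \tk_{\code}(\rim)$ (since trivially $\tk_{\code}(\constr\cup\rim)\subseteq \tk_{\code}(\rim)$, and conversely any $d\in\tk_{\code}(\rim)$ lies in $\tk_{\code}(\constr)$, hence contains $\constr\cup\rim$). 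Everything else is a short chain of trunk inclusions and an application of \ref{itm:Di}, exactly in the style of the proof of Proposition~\ref{prop:missingrim}. An alternative and perhaps even shorter route: since $\constr\subseteq\rim$, condition \ref{itm:Div-rho} with equal witnesses literally says $\tk_{\code}(\rim)\subseteq\tk_{\code}(\spokeTwo)$, so one could observe directly that the sprocket data then also makes $(\spokeOne, \rim, \spokeThree, \rim)$-type reasoning force $\spokeOne \cup \spokeTwo \cup \spokeThree \cup \rim$ into $\scplexC$; I would present whichever phrasing reads most cleanly alongside the adjacent propositions.
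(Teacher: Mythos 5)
Your proof is correct and reaches the paper's contradiction by the same overall mechanism, but via a slightly different clause of the sprocket definition. The paper takes the witness codeword $c\in\tk_{\code}(\spokeOne\cup\rim)$ (from \ref{itm:Diii}) and pushes it into $\tk_{\code}(\constr)$ using \ref{itm:Div-rho-1}, whereas you first establish the clean trunk containment $\tk_{\code}(\rim)\subseteq\tk_{\code}(\constr)$ from \ref{itm:Div-rho-2}, combine with \ref{itm:Div-rho-3} to get $\tk_{\code}(\rim)\subseteq\tk_{\code}(\spokeTwo)$, and only then introduce $c$. The endgame — $c\in\tk_{\code}(\spokeOne\cup\spokeTwo)=\tk_{\code}(\hub)$ by \ref{itm:Di}, then $c\in\tk_{\code}(\hub\cup\rim)$ contradicting \ref{itm:Dii} — is identical. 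Your route has the modest advantage of isolating the reusable fact $\tk_{\code}(\rim)\subseteq\tk_{\code}(\spokeTwo)$, at the cost of not using \ref{itm:Div-rho-1} at all.

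One inaccuracy worth removing: the gloss ``\,$\tk_{\code}(\rim)\subseteq\tk_{\code}(\constr)$, i.e.\ $\constr\subseteq\rim$\,'' (repeated in your alternative phrasing) is not a valid deduction. Trunk containment does not entail containment of index sets: for instance, if two neurons have identical receptive fields in every realization, their singleton trunks coincide without either index set containing the other. Your actual chain never uses $\constr\subseteq\rim$ — only the trunk containment, and the derived equality $\tk_{\code}(\constr\cup\rim)=\tk_{\code}(\rim)$, which you justify correctly — so the proof stands; but the ``i.e.'' should be deleted to avoid asserting a false equivalence.
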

 
\begin{proof}
Assume for contradiction that
$\constrOne, \constrThree$ are witnesses for the sprocket 
$\WheelFull$, and that $\constrOne= \constrThree$.  
Let 
$\constrBad:=\constrOne= \constrThree$. 
By \ref{itm:Diii}, there exists 
%a codeword $c\in \code$ such that %{\color{red}$\spokeOne\cup \rim\subseteq c$} 
$c\in \tk_{\code}(\spokeOne \cup \rim)$. Thus, by \ref{itm:Div-rho-1}, we have that %{\color{red}$\constrBad \subseteq c$} 
$c\in \tk_{\code}(\constrBad)$. So, %{\color{red}$\constrBad$ and $\rim$ are both contained in $c$}
$c\in \tk_{\code}(\constrBad \cup \rim)$, and thus by \ref{itm:Div-rho-3} 
(and the fact that $\constrBad=\constrOne= \constrThree$)
we have that %{\color{red}$\spokeTwo \subseteq c$}
$c\in \tk_{\code}(\spokeTwo)$. Furthermore, %{\color{red}by the fact that $\spokeOne$ and $\spokeTwo$ are both contained in $c$, we have by \ref{itm:Div-rho-3} that $\spokeOne \cup \spokeTwo \cup \spokeThree\subseteq c$} 
$c\in \tk_{\code}(\spokeOne) \cap \tk_{\code}(\spokeTwo) = \tk_{\code}(\spokeOne \cup \spokeTwo) = \tk_{\code}(\scfaceS_1 \cup \scfaceS_2 \cup \scfaceS_3)$, where the last equality is from \ref{itm:Di}. Hence, $c\in \tk_{\code}(\spokeOne \cup \spokeTwo \cup \spokeThree) \cap \tk_{\code}(\spokeOne \cup \rim) = \tk_{\code}(\spokeOne \cup \spokeTwo \cup \spokeThree \cup \rim)$, and thus $(\spokeOne \cup \spokeTwo \cup \spokeThree \cup \rim) \subseteq c$, which contradicts \ref{itm:Dii}.
 \end{proof}

The next result is inspired by the theory of local obstructions. Recall that 
 a code~$\code$ has no local obstruction if and only if $\code$ contains all mandatory faces of $\scplexC$  (Proposition~\ref{prop:summary-prior-results}). 
The reason behind this is a bubbling-up property: if $\code$ has a local obstruction $(\locobOne, \locobTwo)$, then it also has a local obstruction of the form $(\widetilde{\locobOne},[n]\smallsetminus \widetilde{\locobOne})$ where $\locobOne\subseteq \widetilde{\locobOne}$ and $\widetilde{\locobOne}$ is a mandatory face (and thus a max-intersection face) of $\scplexC$~\cite{what-makes}.  
In a similar manner, the following result shows that if a code $\code$ has a sprocket, then it also has a sprocket in which the rim $\rim$ is a max-intersection face of~$\scplexC$. 
%our goal is to restrict the task of searching for wheels from checking all the quadruples of $\scplexC$ to just those quadruples that involve at least one max-intersection face of $\scplexC$. Specifically, we would like to show that if $\code$ has a wheel, then $\code$ also has a wheel where the rim is a max-intersection face. Here, we show this is true for sprockets:

\begin{prop}[Bubble-up property for sprockets]\label{prop:bubble}
If a neural code $\code$ has a sprocket,
then 
$\code$ has a sprocket $\WheelFull$ 
in which 
$\rim$ is a max-intersection face of $\scplexC$.
%
%Let $\WheelFull$ be a sprocket of a code $\code$, and let $\widetilde{\rim}$ be the intersection of all maximal codewords of $\code$ that contain $\rim$. Then $\widetilde{\Wheel} = (\spokeOne,\spokeTwo,\spokeThree,\widetilde{\rim})$ is also a sprocket of $\code$. 
 \end{prop}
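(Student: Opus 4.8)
The plan is to start with an arbitrary sprocket $\WheelFull$ of $\code$ with witnesses $\constrOne, \constrThree$, and replace the rim $\rim$ by a well-chosen max-intersection face $\widetilde{\rim}$ that contains $\rim$, keeping the three spokes (and, ideally, the same witnesses) unchanged. The natural candidate for $\widetilde{\rim}$ is the intersection of all facets of $\scplexC$ that contain $\rim$; since $\rim \notin \code$ by Proposition~\ref{prop:missingrim}, and more importantly since $\WheelFull$ satisfies~\ref{itm:Dii} (so $\spokeOne \cup \spokeTwo \cup \spokeThree \cup \rim \notin \scplexC$), the face $\rim$ is not itself a facet, so this intersection is a genuine max-intersection face strictly containing $\rim$ — or at least a max-intersection face containing $\rim$, which is what we want. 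Write $\widetilde{\rim} := \bigcap \{ F : F \text{ a facet of } \scplexC,\ \rim \subseteq F \}$. The key elementary observation, which I would isolate as a sub-claim, is that $\tk_{\code}(\widetilde{\rim}) = \tk_{\code}(\rim)$: every maximal codeword containing $\rim$ contains $\widetilde{\rim}$ by construction, and conversely every codeword containing $\rim$ is contained in some maximal codeword containing $\rim$, hence contains... wait, that direction needs care — a codeword $c \supseteq \rim$ lies in a facet $F \supseteq \rim$, but $c$ need not contain $\widetilde{\rim}$. So instead the right statement is the reverse containment on trunks at the level needed; let me reconsider.

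The correct and usable fact is: for any face $\rim$ of $\scplexC$, the set of facets containing $\rim$ equals the set of facets containing $\widetilde{\rim}$, hence $\tk_{\code}(\widetilde{\rim})$ and $\tk_{\code}(\rim)$ have the same maximal elements, though $\tk_{\code}(\rim)$ may be strictly larger. This means that for any $\psi$, $\tk_{\code}(\psi \cup \widetilde{\rim}) \subseteq \tk_{\code}(\psi \cup \rim)$ trivially (since $\rim \subseteq \widetilde{\rim}$), and conversely any maximal codeword in $\tk_{\code}(\psi \cup \rim)$ contains $\rim$ hence lies in a facet containing $\rim$ hence contains $\widetilde{\rim}$, so it lies in $\tk_{\code}(\psi \cup \widetilde{\rim})$. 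I would verify each of the sprocket axioms for $(\spokeOne, \spokeTwo, \spokeThree, \widetilde{\rim})$ with witnesses $\constrOne, \constrThree$ using this maximal-element comparison: \ref{itm:Dii} because $\spokeOne\cup\spokeTwo\cup\spokeThree\cup\widetilde{\rim}$ would lie in a facet containing $\rim$, but that facet contains $\spokeOne\cup\spokeTwo\cup\spokeThree$ by \ref{itm:Di} applied to the sprocket's trunk equality and the original \ref{itm:Dii}, a contradiction; \ref{itm:Diii} because $\spokej \cup \widetilde{\rim}$ is contained in a facet containing both $\spokej$ (by the original \ref{itm:Diii}... need $\spokej \cup \rim \in \scplexC$ to land in a facet also containing $\widetilde{\rim}$ — this requires the facet through $\spokej \cup \rim$ to contain $\widetilde{\rim}$, which holds since it contains $\rim$); and \ref{itm:Di} is unchanged since it does not involve $\rim$. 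The three sprocket conditions \ref{itm:Div-rho-1}, \ref{itm:Div-rho-2}, \ref{itm:Div-rho-3} each reduce to trunk containments, and replacing $\rim$ by $\widetilde{\rim}$ only shrinks the left-hand trunks while $\widetilde{\rim}$ on the right in~\ref{itm:Div-rho-3} must be handled by noting $\tk_{\code}(\constrOne \cup \constrThree \cup \widetilde{\rim}) \subseteq \tk_{\code}(\constrOne \cup \constrThree \cup \rim) \subseteq \tk_{\code}(\spokeTwo)$ since $\rim \subseteq \widetilde{\rim}$.

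The main obstacle I anticipate is \ref{itm:Div-rho-2}: we need $\tk_{\code}(\widetilde{\rim}) \subseteq \tk_{\code}(\constrOne) \cup \tk_{\code}(\constrThree)$, and although $\tk_{\code}(\widetilde{\rim}) \subseteq \tk_{\code}(\rim)$ (a genuine containment here, since $\widetilde{\rim} \supseteq \rim$), and $\tk_{\code}(\rim) \subseteq \tk_{\code}(\constrOne) \cup \tk_{\code}(\constrThree)$ by the original~\ref{itm:Div-rho-2}, so this composes correctly — this one is actually fine. The subtler point is rather \ref{itm:Diii} for the new rim: we need $\spokej \cup \widetilde{\rim} \in \scplexC$, and I need to make sure that the facet witnessing $\spokej \cup \rim \in \scplexC$ can be taken to contain $\widetilde{\rim}$. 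Since any facet containing $\rim$ contains $\widetilde{\rim}$ (by definition of $\widetilde{\rim}$ as the intersection over all such facets), any facet containing $\spokej \cup \rim$ in particular contains $\rim$, hence contains $\widetilde{\rim}$, hence contains $\spokej \cup \widetilde{\rim}$ — so this works. I would also double-check the claim that $\widetilde{\rim}$ is genuinely a max-intersection face: it is the intersection of all facets containing $\rim$; if there is only one such facet $F$, then $\widetilde{\rim} = F$ is a facet, not a max-intersection face, and the argument breaks. But this cannot happen: if $\rim$ lay in a unique facet $F$, then since $\spokej \cup \rim \in \scplexC$ for each $j$ by~\ref{itm:Diii}, each $\spokej \cup \rim \subseteq F$, so $\spokeOne \cup \spokeTwo \cup \spokeThree \cup \rim \subseteq F \in \scplexC$, contradicting~\ref{itm:Dii}. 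Hence $\rim$ lies in at least two facets and $\widetilde{\rim}$ is a bona fide max-intersection face. Assembling these observations gives the proof; the writing is mostly bookkeeping once the maximal-element comparison lemma for trunks is stated cleanly.
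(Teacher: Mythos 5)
Your proposal takes essentially the same route as the paper: replace $\rim$ by $\widetilde{\rim} := \bigcap\{F : F\text{ facet of }\scplexC,\ \rim\subseteq F\}$, keep the spokes and witnesses, and verify the sprocket axioms by observing that any facet containing $\rim$ contains $\widetilde{\rim}$ and that $\rim\subseteq\widetilde{\rim}$ reverses trunk containments in the right direction. Your internal verification of each condition is correct, though for~\ref{itm:Dii} you overcomplicate things -- the one-line observation that $\hub\cup\rim \subseteq \hub\cup\widetilde{\rim}$, so the latter cannot be a face since the former is not, is all that is needed. The one genuine addition in your write-up is the explicit check that $\rim$ lies in at least two facets (else $\hub\cup\rim$ would sit inside the unique one, contradicting~\ref{itm:Dii}), which guarantees $\widetilde{\rim}$ really is the intersection of \emph{two or more} facets as the definition of max-intersection face requires; the paper leaves this implicit, so this is a worthwhile point to record.
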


\begin{proof}
 Let $\WheelFull$ be a sprocket of a code $\code$, and let $\widetilde{\rim}$ be the intersection of all maximal codewords of $\code$ that contain $\rim$. It suffices to show that $\widetilde{\Wheel} := (\spokeOne,\spokeTwo,\spokeThree,\widetilde{\rim})$ is also a sprocket of $\code$. First, 
  \ref{itm:Di} depends only on $\spokeOne$, $\spokeTwo$, $\spokeThree$; 
  so the fact that 
  $\Wheel$
 satisfies \ref{itm:Di}
 implies that 
  $\widetilde{\Wheel}$ 
  does too.

% $\widetilde{\Wheel}$ satisfies \ref{itm:Di} since $\Wheel$ also satisfies \ref{itm:Di}, and \ref{itm:Di} depends only on $\spokeOne$, $\spokeTwo$, and $\spokeThree$.
 
 Next, we show that $\widetilde{\Wheel}$ satisfies \ref{itm:Dii}. The containment $\rim\subseteq \widetilde{\rim}$ implies that $\hub\cup \rim \subseteq \hub \cup \widetilde{\rim}$. Then, from $\hub\cup \rim\not\in \scplexC$ (because $\Wheel$ satisfies \ref{itm:Dii}), we have that $\hub\cup \widetilde{\rim}\not\in \scplexC$.
 
 To show that $\widetilde{\Wheel}$ satisfies \ref{itm:Diii}, we must show that $\spokej\cup \widetilde{\rim}\in \scplexC$ for $j=1,2,3$. 
 Because 
 $\Wheel$ satisfies \ref{itm:Diii}, 
 there exists a maximal codeword $c_j$ of $\code$ such that 
 $\spokej\cup \rim \subseteq c_j$.
 %$\spokej\cup \rim$ is contained in some codeword $c_j$ of $\code$. 
 %We may assume that each $c_j$ is maximal. 
 By construction of $\widetilde{\rim}$, we have that 
 $\widetilde{\rim} \subseteq c_j$. 
 Hence, $\spokej\cup \widetilde{\rim} \subseteq c_j$ and so, as desired, $\spokej\cup \widetilde{\rim}\in \scplexC$.
 
 To show that $\widetilde{\Wheel}$ satisfies \ref{itm:Div-rho}, we begin by choosing witnesses $\constrOne, \constrThree$ for the sprocket $\Wheel$ (as in~\ref{itm:Div-rho}).  Next, the containment $\rim\subseteq \widetilde{\rim}$ yields the following:
 \begin{itemize}
     \item $\tk_{\code}(\spokej \cup \widetilde{\rim})\subseteq \tk_{\code}(\spokej \cup \rim)$,
     \item $\tk_{\code}(\widetilde{\scfaceT})\subseteq \tk_{\code}(\scfaceT)$, and
     \item $\tk_{\code}(\rho_1 \cup \rho_3 \cup \widetilde{\scfaceT}) \subseteq \tk_{\code}(\rho_1 \cup \rho_3 \cup \scfaceT) $.
 \end{itemize}  
Combining these three containments with, respectively, the containments in~\ref{itm:Div-rho}(1--3) satisfied by $\Wheel$ by way of $\constrOne, \constrThree$, we conclude that $\constrOne, \constrThree$ are also witnesses for $\widetilde{\Wheel}$.  That is, $\widetilde{\Wheel}$ satisfies \ref{itm:Div-rho}.
\end{proof}
 
Propositions \ref{prop:missingrim} and \ref{prop:bubble} together imply the following result.
%that, when checking whether a code has a sprocket, the only possible rims $\rim$ that we need to consider are non-codewords that are intersections of maximal codewords. 
%
\begin{corollary} \label{cor:tau-is-max}
If a neural code $\code$ has a sprocket, then 
$\code$ has a sprocket $\WheelFull$ such that $\rim\not\in \code$ and $\rim$ is a max-intersection face of $\scplexC$.
\end{corollary}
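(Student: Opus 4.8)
Corollary~\ref{cor:tau-is-max} follows by combining the two preceding results, so the plan is essentially to chain them correctly, keeping track of which sprocket we end up with.

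First I would recall the statement of Proposition~\ref{prop:bubble}: if $\code$ has a sprocket, then it has a sprocket $\WheelFull$ whose rim $\rim$ is a max-intersection face of $\scplexC$. Starting from any sprocket of $\code$, apply Proposition~\ref{prop:bubble} to obtain such a sprocket $\WheelFull$ with $\rim$ a max-intersection face. Then I would invoke Proposition~\ref{prop:missingrim}, which asserts that the rim of \emph{any} sprocket is a non-codeword: applying it to the sprocket $\WheelFull$ just obtained gives $\rim \notin \code$. Since the rim produced by Proposition~\ref{prop:bubble} is unchanged when we then apply Proposition~\ref{prop:missingrim} (the latter is a property, not a modification), the single sprocket $\WheelFull$ simultaneously has $\rim \notin \code$ and $\rim$ a max-intersection face of $\scplexC$, which is exactly the claim.

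The one point that requires a moment's care — and the only place where this is more than a one-line citation — is making sure the order of application is the right one. We must \emph{first} bubble up (Proposition~\ref{prop:bubble}) and \emph{then} observe the non-codeword property (Proposition~\ref{prop:missingrim}), rather than the reverse: Proposition~\ref{prop:missingrim} does not alter the sprocket, so it imposes no constraint that could be spoiled by a subsequent application of Proposition~\ref{prop:bubble}, whereas bubbling up \emph{does} change the rim. Concretely: the sprocket $\widetilde{\Wheel}$ constructed in the proof of Proposition~\ref{prop:bubble} has rim $\widetilde{\rim} = \bigcap\{c : c \text{ is a maximal codeword of } \code,\ \rim \subseteq c\}$, and since it is itself a sprocket, Proposition~\ref{prop:missingrim} applies to it and yields $\widetilde{\rim} \notin \code$. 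So the proof is simply: apply Proposition~\ref{prop:bubble} to get a sprocket with max-intersection rim, then apply Proposition~\ref{prop:missingrim} to that sprocket.

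\begin{proof}
Suppose $\code$ has a sprocket. By Proposition~\ref{prop:bubble}, $\code$ has a sprocket $\WheelFull$ in which $\rim$ is a max-intersection face of $\scplexC$. Applying Proposition~\ref{prop:missingrim} to this sprocket, we obtain $\rim \notin \code$. Thus $\WheelFull$ is a sprocket of $\code$ with $\rim \notin \code$ and $\rim$ a max-intersection face of $\scplexC$, as claimed.
\end{proof}
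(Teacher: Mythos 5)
Your proof is correct and matches the paper's own (one-line) argument: the paper simply cites Propositions~\ref{prop:missingrim} and~\ref{prop:bubble} together. Your extra commentary about the order of application is accurate and, if anything, more explicit than what the paper says.
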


\subsection{Beyond sprockets} \label{sec:beyond-sprocket}

A natural question is whether Corollary~\ref{cor:tau-is-max} extends beyond sprockets. % to wire wheels or wheel frames.  
In fact, a slightly stronger result holds for wire wheels (Proposition~\ref{prop:wire-wheel-max-intersection}), and a bubble-up property holds for wheel frames (Proposition~\ref{prop:wheel-frame-max-intersection}).
However, for wheel frames we do not know whether the rim $\rim$ is always a non-codeword.
%{\color{red} OR... can we actually add
%$\rim\not\in \code$ to Proposition~\ref{prop:wheel-frame-max-intersection}?  If true, the proof will likely use the second part of F(iv).}

\begin{proposition}[Properties of wire wheels] \label{prop:wire-wheel-max-intersection}
If $\WheelFull$ is a wire wheel of a neural code $\code$, then $\rim\not\in \code$ and $\rim$ is a max-intersection face of $\scplexC$.
\end{proposition}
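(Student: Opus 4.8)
The first claim, $\rim \notin \code$, is immediate: it is one of the defining conditions of a wire wheel (Definition~\ref{def:wireWheel}). So the substance of the proposition is that $\rim$ is a max-intersection face of $\scplexC$, i.e., the intersection of two or more facets (equivalently, maximal codewords). The plan is to let $\widetilde{\rim}$ denote the intersection of all maximal codewords of $\code$ that contain $\rim$, and then to show (a) that at least two maximal codewords contain $\rim$, so that $\widetilde{\rim}$ really is an intersection of two or more facets, and (b) that $\widetilde{\rim} = \rim$. Following the proof of Proposition~\ref{prop:wire-wheel}, I would first relabel the neurons so that $\spokeell \smallsetminus \rim = \{\ell\}$ for $\ell = 1,2,3$; by Lemma~\ref{lem:wire-wheel-spokes} these are three distinct vertices of the tree $\LkCfaceT$, and the unique path of $\LkCfaceT$ from $1$ to $3$ passes through $2$, hence has length at least two. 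Two facts to record at this point: $\{1,3\}$ is not an edge of $\LkCfaceT$ (so $\rim \cup \{1,3\} \notin \scplexC$), and $\spokeell \subseteq \{\ell\} \cup \rim$ since $\spokeell \smallsetminus \rim = \{\ell\}$.

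For (a): since $1$ and $3$ are vertices of $\LkCfaceT$, there are maximal codewords $c_1 \supseteq \rim \cup \{1\}$ and $c_3 \supseteq \rim \cup \{3\}$, and $c_1 \neq c_3$, since $c_1 = c_3$ would force $\rim \cup \{1,3\} \in \scplexC$. For (b): suppose toward a contradiction that there is some $x \in \widetilde{\rim} \smallsetminus \rim$; then $x$ lies in every maximal codeword containing $\rim$, so in particular $x \in c_1 \cap c_3$. If $x \in \{1,2,3\}$, then one of $c_1$, $c_3$ contains $\rim$ together with two distinct singletons from $\{1\},\{2\},\{3\}$ --- for instance, if $x = 2$ then $c_1 \supseteq \rim \cup \{1,2\}$ --- and hence (using $\spokeell \subseteq \{\ell\} \cup \rim$) contains two of the spokes $\spokeOne, \spokeTwo, \spokeThree$; then it lies in the trunk of $\spokeOne \cup \spokeTwo \cup \spokeThree$ by the trunk equalities in~\ref{itm:Di}, i.e., contains all three spokes, so it contains $\spokeOne \cup \spokeTwo \cup \spokeThree \cup \rim$, contradicting~\ref{itm:Dii}. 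If instead $x \notin \{1,2,3\}$, then $x \in c_1$ makes $\{1,x\}$ a face (an edge) of $\LkCfaceT$, and $x \in c_3$ makes $\{x,3\}$ an edge, so the vertices $1, x, 3$ form a path in the tree $\LkCfaceT$; by uniqueness of paths in a tree this is the path from $1$ to $3$, so the vertex $2$ lying on that path must equal $x$ --- contradicting $x \notin \{1,2,3\}$. This yields $\widetilde{\rim} = \rim$, and together with (a), $\rim$ is the intersection of two or more facets of $\scplexC$, as desired.

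The step I expect to be the main obstacle is the case $x \in \{1,2,3\}$: one has to pick the right codeword among $c_1, c_3$ in each sub-case ($x = 1$, $2$, or $3$) so that it provably contains two distinct spoke-singletons, and then deploy the trunk equalities of~\ref{itm:Di} together with the non-face condition~\ref{itm:Dii} to reach the contradiction $\spokeOne \cup \spokeTwo \cup \spokeThree \cup \rim \in \scplexC$; this is essentially the argument already used in the proof of Lemma~\ref{lem:wire-wheel-spokes}. By contrast, the case $x \notin \{1,2,3\}$ is the step that genuinely exploits acyclicity of the link $\LkCfaceT$ and the "path through $2$" condition from the definition of a wire wheel, and it is comparatively clean.
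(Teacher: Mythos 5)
Your proposal is correct, and it takes a genuinely different route from the paper's argument. The paper works directly with the path $p_1 = \sigma_1\setminus\tau,\dots,p_s = \sigma_3\setminus\tau$ in the tree $\mathrm{Lk}_{\tau}(\Delta(\mathcal{C}))$: it upgrades Lemma~\ref{lem:wire-wheel-spokes}'s bound $1<r<s$ to $2<r$ and $r+1<s$ (so $s>4$), then exhibits two explicit facets $F_1=\tau\cup\{p_1,p_2\}$ and $F_2=\tau\cup\{p_{s-1},p_s\}$ whose intersection is $\tau$ because the two edges are disjoint. You instead show that the intersection $\widetilde{\tau}$ of \emph{all} facets containing $\tau$ already equals $\tau$: any $x\in\widetilde{\tau}\setminus\tau$ lies in both $c_1\supseteq\tau\cup\{1\}$ and $c_3\supseteq\tau\cup\{3\}$, and either $x\in\{1,2,3\}$, in which case one of $c_1,c_3$ contains two distinct spoke singletons and the trunk equalities of~\ref{itm:Di} combined with~\ref{itm:Dii} yield a contradiction (this is the same computation used for Lemma~\ref{lem:wire-wheel-spokes} and for the paper's $2<r$ step), or $x\notin\{1,2,3\}$, in which case $\{1,x\}$ and $\{x,3\}$ are edges of the tree, the two-step walk through $x$ is \emph{the} path from $1$ to $3$, and the path-through-$2$ condition forces $x=2$, which is absurd. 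Both arguments rest on the same two inputs --- acyclicity of the link plus the path-through-$2$ condition, and the trunk-equality mechanism --- but the paper's route buys the slightly stronger structural fact that the path has at least five vertices, whereas yours gives the cleaner statement that $\tau$ is the intersection of \emph{all} facets containing it and is structurally parallel to the bubble-up proofs of Propositions~\ref{prop:bubble} and~\ref{prop:wheel-frame-max-intersection}. Your assessment of where the work lies is also accurate: the $x\in\{1,2,3\}$ case is where the Lemma-\ref{lem:wire-wheel-spokes}-style argument recurs, and $x\notin\{1,2,3\}$ is where acyclicity really bites.
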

\begin{proof}
Let $\WheelFull$ be a wire wheel of $\code$.  By definition, $\rim\not\in \code$.  Now we must show that $\rim$ is a max-intersection face of $\scplexC$.  

By definition, the link $\LkCfaceT$ is a tree, and 
the unique path in this tree between $\spokeOne \smallsetminus \rim$ and
    $\spokeThree \smallsetminus \rim$ contains $\spokeTwo
    \smallsetminus \rim$.  Denote this path by $p_1= (\spokeOne \smallsetminus \rim), p_2, \dots, p_r= (\spokeTwo \smallsetminus \rim), p_{r+1}, \dots, p_s= (\spokeThree \smallsetminus \rim)$. By Lemma~\ref{lem:wire-wheel-spokes}, we know that $1 < r <s$.  
    
    Next, we claim that $2 < r$ (and so, by symmetry, $r+1 < s$).  Assume for contradiction that $2=r$.  Then $\rim \cup \spokeOne \cup \spokeTwo = \rim \cup \{p_1, p_2\}$, which is a face of $\scplexC$ 
    because  $\{p_1, p_2\}$ is an edge of the link $\LkCfaceT$.  We also know that $\rim \cup \spokeOne \cup \spokeTwo = \rim \cup \spokeOne \cup \spokeTwo \cup \spokeThree$ (by~\ref{itm:Di}). We conclude that $\rim \cup \spokeOne \cup \spokeTwo \cup \spokeThree \in \scplexC$, which contradicts~\ref{itm:Dii}. Therefore, as claimed, $2 < r$ (and so $r+1 < s$).  In particular, $s > 4$.

Let $F_1:=
\rim \cup \{p_1,p_2\}$ and $
F_2:= \rim \cup \{p_{s-1}, p_s\}$.  Both $F_1$ and $F_2$ are facets of $\scplexC$, because $\{p_1,p_2\}$ and $ \{p_{r-1},p_r\}$ are edges of the link $\LkCfaceT$ (which is a tree).  Also, as $s > 4$, we have $F_1 \cap F_2 = \rim$.  So, $\rim$ is the intersection of two facets of $\scplexC$ and therefore is a max-intersection face.
\end{proof}

%Wheel frames, like sprockets, have a bubble-up property.
\begin{proposition}[Bubble-up property for wheel frames] \label{prop:wheel-frame-max-intersection}
If a neural code $\code$ has a wheel frame,
then 
$\code$ has a wheel frame
$(\spokeOne, \spokeThree, \rim)$ 
in which 
$\rim$ is a max-intersection face of $\scplexC$.
\end{proposition}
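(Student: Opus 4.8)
The plan is to mimic the bubble-up proof for sprockets (Proposition~\ref{prop:bubble}), replacing the rim $\rim$ of a wheel frame $(\spokeOne,\spokeThree,\rim)$ by $\widetilde{\rim}$, the intersection of all maximal codewords of $\code$ that contain $\rim$. Since $\rim \in \scplexC$, at least one such maximal codeword exists, so $\rim \subseteq \widetilde{\rim}$, and $\widetilde{\rim}$ is by construction a max-intersection face (it is the intersection of the facets of $\scplexC$ through $\rim$; if only one facet contains $\rim$ then $\widetilde{\rim}$ equals that facet, which is vacuously a max-intersection face, but as in the sprocket case that situation will be ruled out by~\ref{itm:D1iv}). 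The goal is then to verify that $(\spokeOne,\spokeThree,\widetilde{\rim})$ is again a wheel frame.

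I would check the four defining conditions~\ref{itm:D1iii}--\ref{itm:D1ii} in turn. For~\ref{itm:D1i}: because the original wheel frame satisfies $\spokeOne \cup \rim \in \scplexC$ and $\spokeThree\cup\rim\in\scplexC$, each of $\spokeOne\cup\rim$ and $\spokeThree\cup\rim$ lies in some facet; but any facet containing $\spokeOne\cup\rim$ also contains $\rim$, hence contains $\widetilde{\rim}$ by definition, so $\spokeOne\cup\widetilde{\rim}\in\scplexC$ (and likewise for $\spokeThree$) — this is exactly the argument used for~\ref{itm:Diii} in Proposition~\ref{prop:bubble}. For~\ref{itm:D1iv}: from $\rim\subseteq\widetilde{\rim}$ we get $\rim\cup\spokeOne\cup\spokeThree \subseteq \widetilde{\rim}\cup\spokeOne\cup\spokeThree$, so $\rim\cup\spokeOne\cup\spokeThree\notin\scplexC$ forces $\widetilde{\rim}\cup\spokeOne\cup\spokeThree\notin\scplexC$. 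For~\ref{itm:D1ii}: the condition $\spokeOne\cap\spokeThree=\varnothing$ does not involve the rim, so it is inherited; for the trunk containment $\tk_{\code}(\widetilde{\rim})\subseteq\bigcup_{i\in\spokeOne\cup\spokeThree}\tk_{\code}(\{i\})$, I would use $\tk_{\code}(\widetilde{\rim})\subseteq\tk_{\code}(\rim)$ (since $\rim\subseteq\widetilde{\rim}$) together with the original inclusion $\tk_{\code}(\rim)\subseteq\bigcup_{i\in\spokeOne\cup\spokeThree}\tk_{\code}(\{i\})$.

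The one condition requiring genuine care is~\ref{itm:D1iii}, because it quantifies over all $\omega\subseteq\spokeOne\cup\spokeThree$ with $\omega\not\subseteq\spokeOne$, $\omega\not\subseteq\spokeThree$, and $\omega\cup\rim\in\scplexC$ — and enlarging the rim to $\widetilde{\rim}$ \emph{shrinks} the set of qualifying $\omega$'s, since $\omega\cup\widetilde{\rim}\in\scplexC$ is a stronger requirement than $\omega\cup\rim\in\scplexC$. Concretely: if $\omega$ qualifies for the new wheel frame, i.e. $\omega\cup\widetilde{\rim}\in\scplexC$, then $\omega\cup\rim\in\scplexC$ as well (because $\rim\subseteq\widetilde{\rim}$), so $\omega$ also qualified for the original wheel frame, and hence the required equalities $\tk_{\code}(\spokeOne\cup\omega)=\tk_{\code}(\spokeOne\cup\spokeThree)$ and $\tk_{\code}(\spokeThree\cup\omega)=\tk_{\code}(\spokeOne\cup\spokeThree)$ already hold. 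Thus~\ref{itm:D1iii} is inherited essentially for free. So I expect the proof to be short and parallel to Proposition~\ref{prop:bubble}; the only subtlety — and the step I would flag as the ``main obstacle'' in the sense of being where one could go wrong — is getting the direction of the $\rim\subseteq\widetilde{\rim}$ implications right in each of the four conditions, noting in particular that the universally-quantified hypothesis~\ref{itm:D1iii} becomes \emph{easier} rather than harder to satisfy after bubbling up, unlike the existentially-flavored conditions. I would close by remarking, as the statement's surrounding text already anticipates, that this argument does not show $\widetilde{\rim}\notin\code$, since — in contrast to sprockets (Proposition~\ref{prop:missingrim}) — there is no analogous mechanism forcing the rim of a wheel frame out of the code.

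\begin{proof}
Let $(\spokeOne,\spokeThree,\rim)$ be a wheel frame of $\code$, and let $\widetilde{\rim}$ denote the intersection of all maximal codewords of $\code$ that contain $\rim$. Since $\rim\in\scplexC$ by hypothesis, at least one such maximal codeword exists, so $\widetilde{\rim}$ is well defined, $\rim\subseteq\widetilde{\rim}$, and $\widetilde{\rim}$ is a max-intersection face of $\scplexC$ whenever it is the intersection of \emph{two or more} facets. (The case in which only one facet of $\scplexC$ contains $\rim$ is excluded: that facet would contain $\spokeOne\cup\rim$ and $\spokeThree\cup\rim$, hence $\spokeOne\cup\spokeThree\cup\rim$, contradicting~\ref{itm:D1iv}.) We show $(\spokeOne,\spokeThree,\widetilde{\rim})$ is a wheel frame.

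\textbf{\ref{itm:D1i}.} Because $(\spokeOne,\spokeThree,\rim)$ satisfies~\ref{itm:D1i}, there is a facet $c_1$ of $\scplexC$ with $\spokeOne\cup\rim\subseteq c_1$. Then $\rim\subseteq c_1$, so $\widetilde{\rim}\subseteq c_1$ by construction, hence $\spokeOne\cup\widetilde{\rim}\subseteq c_1$ and $\spokeOne\cup\widetilde{\rim}\in\scplexC$. Symmetrically $\spokeThree\cup\widetilde{\rim}\in\scplexC$.

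\textbf{\ref{itm:D1iv}.} From $\rim\subseteq\widetilde{\rim}$ we get $\spokeOne\cup\spokeThree\cup\rim\subseteq\spokeOne\cup\spokeThree\cup\widetilde{\rim}$, so $\spokeOne\cup\spokeThree\cup\rim\notin\scplexC$ (which holds by~\ref{itm:D1iv} for $(\spokeOne,\spokeThree,\rim)$) forces $\spokeOne\cup\spokeThree\cup\widetilde{\rim}\notin\scplexC$.

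\textbf{\ref{itm:D1ii}.} The equality $\spokeOne\cap\spokeThree=\varnothing$ is inherited, as it does not involve the rim. For the trunk condition, $\rim\subseteq\widetilde{\rim}$ gives $\tk_{\code}(\widetilde{\rim})\subseteq\tk_{\code}(\rim)$, and $\tk_{\code}(\rim)\subseteq\bigcup_{i\in\spokeOne\cup\spokeThree}\tk_{\code}(\{i\})$ by~\ref{itm:D1ii} for $(\spokeOne,\spokeThree,\rim)$; combining gives $\tk_{\code}(\widetilde{\rim})\subseteq\bigcup_{i\in\spokeOne\cup\spokeThree}\tk_{\code}(\{i\})$.

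\textbf{\ref{itm:D1iii}.} First, $\spokeOne\cup\spokeThree\in\scplexC$ by~\ref{itm:D1iii} for $(\spokeOne,\spokeThree,\rim)$. Now let $\omega\subseteq\spokeOne\cup\spokeThree$ satisfy $\omega\not\subseteq\spokeOne$, $\omega\not\subseteq\spokeThree$, and $\omega\cup\widetilde{\rim}\in\scplexC$. Since $\rim\subseteq\widetilde{\rim}$, we have $\omega\cup\rim\subseteq\omega\cup\widetilde{\rim}$, so $\omega\cup\rim\in\scplexC$ as well. Thus $\omega$ satisfies the hypotheses of~\ref{itm:D1iii} for the original wheel frame $(\spokeOne,\spokeThree,\rim)$, whence $\tk_{\code}(\spokeOne\cup\omega)=\tk_{\code}(\spokeOne\cup\spokeThree)$ and $\tk_{\code}(\spokeThree\cup\omega)=\tk_{\code}(\spokeOne\cup\spokeThree)$, as required.

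All four conditions hold, so $(\spokeOne,\spokeThree,\widetilde{\rim})$ is a wheel frame of $\code$ whose rim $\widetilde{\rim}$ is a max-intersection face of $\scplexC$.
\end{proof}
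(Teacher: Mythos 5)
Your proof is correct and follows essentially the same route as the paper's: replace $\rim$ by the intersection $\widetilde{\rim}$ of all facets containing $\rim$, then verify the four conditions one by one using $\rim\subseteq\widetilde{\rim}$ and the closure of $\scplexC$ under containment. You are in fact slightly more careful than the paper in two places — explicitly ruling out the degenerate case where $\rim$ lies in a unique facet (via~\ref{itm:D1iv}), and spelling out why the universally quantified condition~\ref{itm:D1iii} becomes easier rather than harder after enlarging the rim — both of which the paper passes over in a single clause.
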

\begin{proof}
Let $ \mathcal{F} =(\spokeOne, \spokeThree, \rim)$ be a wheel frame of $\code$.  Let $\widetilde{\rim}$ be the intersection of all facets of $\scplexC$ that contain $\rim$ (in particular, $\rim \subseteq \widetilde{\rim}$). 
We must show that $\widetilde{ \mathcal{F}} = (\spokeOne,\spokeThree,\widetilde{\rim})$ is also a wheel frame of $\code$. First,~\ref{itm:D1iii} for $\widetilde{ \mathcal{F}} $ is implied by the same condition for $ \mathcal{F}$ (because $\rim \subseteq \widetilde{\rim}$).  Similarly,
$\widetilde{ \mathcal{F}}$ also inherits~\ref{itm:D1iv} from $ \mathcal{F}$, because $\rim \subseteq \widetilde{\rim}$ and simplicial complexes are closed with respect to containment.  Next, we know that
$\spokeOne\cup \rim\in \scplexC$ and $\spokeThree\cup \rim\in \scplexC$, because $ \mathcal{F}$ satisfies~\ref{itm:D1i}.  So, there exist facets $F_1$ and $F_3$ of $\scplexC$ such that 
$\spokeOne\cup \rim \subseteq F_1$ and 
$\spokeThree\cup \rim \subseteq F_3$.  By construction, $\widetilde{\rim} \subseteq F_1$ and $\widetilde{\rim} \subseteq F_3$.  We conclude that 
$\spokeOne\cup \widetilde{\rim} \subseteq F_1$ and 
$\spokeThree \cup \widetilde{\rim} \subseteq F_3$.  Hence, 
$\spokeOne\cup \widetilde{\rim} \in  \scplexC$ and 
$\spokeThree \cup \widetilde{\rim} \in \scplexC$; in other words,
$\widetilde{ \mathcal{F}}$ satisfies~\ref{itm:D1i}.
Finally, 
$\rim \subseteq \widetilde{\rim}$ implies that $\tk_{\code}(\widetilde{\scfaceT})\subseteq \tk_{\code}(\scfaceT)$, and this containment implies that~\ref{itm:D1ii} is inherited from $ \mathcal{F}$ to $\widetilde{ \mathcal{F}}$.
\end{proof}

%\subsection{Conjectures}
So far, we have shown that for certain types of wheels, rims can not be codewords, but always bubble up to a max-intersection face.  We conjecture that these results generalize to all wheels. %, as follows.

\begin{conjecture}\label{conj:bubbleup}
Let $\code$ be a neural code. 
\begin{description}
    \item[\namedlabel{itm:conj-i}{(i)}] If $\WheelFull$ is a wheel of $\code$, then $\rim\not\in \code$.
    \item[\namedlabel{itm:conj-ii}{(ii)}] If $\code$ has a wheel, then $\code$ has a wheel $\WheelFull$ 
    in which 
    $\scfaceT$ is a max-intersection face of $\scplexC$. 
\end{description}
%Let $\code$ be a code that contains a wheel $\WheelFull$. Then the following  hold:
%\begin{description}
%    \item[\namedlabel{itm:conj-i}{(i)}] $\rim\not\in \code$, and
%    \item[\namedlabel{itm:conj-ii}{(ii)}] $\code$ contains a wheel $\widetilde{\Wheel} = (\tilde{\spokeOne}, \tilde{\spokeTwo},\tilde{\spokeThree},\tilde{\rim})$, where $\tilde{\rim}$ is an intersection of maximal codewords.
%\end{description}
\end{conjecture}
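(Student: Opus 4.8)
We outline an approach to Conjecture~\ref{conj:bubbleup} and indicate where the essential difficulty lies; throughout write $\WheelFull$. For part~\ref{itm:conj-i}, the plan is to prove the contrapositive: assuming $\rim\in\code$, construct a realization of $\code$ for which $\Wheel$ is not a wheel. By Proposition~\ref{prop:D}, conditions \ref{itm:Wi} and \ref{itm:Wii} are equivalent to the combinatorial conditions \ref{itm:Di} and \ref{itm:Dii}, so they hold in every realization or in none; hence $\Wheel$ can fail to be a wheel of a given realization only through a failure of \ref{itm:Wiv}. The key observation is that \ref{itm:Wi} and \ref{itm:Wii} force the three sets $U_{\spokej}\cap U_{\rim}$ (for $j=1,2,3$) to be pairwise disjoint inside $U_{\rim}$, since $U_{\spokej}\cap U_{\spokek}\cap U_{\rim}=U_{\spokeOne}\cap U_{\spokeTwo}\cap U_{\spokeThree}\cap U_{\rim}=\varnothing$; and three pairwise disjoint convex sets need not have a common line transversal (think of small balls at the vertices of a triangle), so \ref{itm:Wiv} can fail even with $U_{\rim}$ and every $U_{\spokej}\cap U_{\rim}$ convex.

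To build such a realization, start from any realization $\realiz_0$ of $\code$, leave all sets unchanged outside $U_{\rim}$, and rearrange things inside a convex ball playing the role of $U_{\rim}$ so that the pieces $U_j\cap U_{\rim}$ (for $j\notin\rim$) still realize the same intersection patterns --- equivalently, the same codewords of $\tk_{\code}(\rim)$ --- but with $U_{\spokeOne}\cap U_{\rim}$, $U_{\spokeTwo}\cap U_{\rim}$, $U_{\spokeThree}\cap U_{\rim}$ now small convex blobs in ``triangle position''. The hypothesis $\rim\in\code$ is exactly what should permit this: it guarantees that the atom of $\rim$ is nonempty, i.e.\ $U_{\rim}$ strictly contains $\bigcup_{j\notin\rim}(U_j\cap U_{\rim})$, leaving free room to fill out the convex ball and reposition the blobs without creating or destroying codewords. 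Since we need only \emph{some} realization, the modified $U_j$ may be taken non-convex, which gives ample flexibility. For such a realization \ref{itm:Wiv} fails, so $\Wheel$ is not a wheel of $\code$, a contradiction; making this rearrangement precise while preserving the entire code is the delicate point.

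For part~\ref{itm:conj-ii}, assume part~\ref{itm:conj-i}, so a wheel $\Wheel$ of $\code$ has $\rim\notin\code$. Mirroring Propositions~\ref{prop:bubble} and~\ref{prop:wire-wheel-max-intersection}, let $\widetilde{\rim}$ be the intersection of all maximal codewords of $\code$ containing $\rim$, and set $\widetilde{\Wheel}:=(\spokeOne,\spokeTwo,\spokeThree,\widetilde{\rim})$; the goal is to show $\widetilde{\Wheel}$ is again a wheel of $\code$. Condition \ref{itm:Di} does not involve the rim, hence is inherited from $\Wheel$; and \ref{itm:Dii} is inherited because $\rim\subseteq\widetilde{\rim}$ gives $\hub\cup\rim\subseteq\hub\cup\widetilde{\rim}$ and $\scplexC$ is closed under subsets, exactly as in the proof of Proposition~\ref{prop:bubble}. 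Once $\widetilde{\Wheel}$ is shown to be a wheel, $\widetilde{\rim}$ is automatically a max-intersection face: $\rim$ cannot lie in a single maximal codeword $F$, since then $\widetilde{\rim}=F\in\code$ would be a wheel rim, contradicting part~\ref{itm:conj-i}, so $\widetilde{\rim}$ is the intersection of two or more facets of $\scplexC$. Thus everything reduces to verifying \ref{itm:Wiv} for $\widetilde{\Wheel}$.

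This last step --- verifying \ref{itm:Wiv} for $\widetilde{\Wheel}$ --- is the main obstacle. The difficulty is that $\widetilde{\rim}\supseteq\rim$ gives $U_{\widetilde{\rim}}\subseteq U_{\rim}$ in every realization, so convexity of $U_{\widetilde{\rim}}$ (and of the sets $U_{\spokej}\cap U_{\widetilde{\rim}}$) does \emph{not} entail convexity of $U_{\rim}$; hence, given a realization $\realiz$ satisfying the hypotheses of \ref{itm:Wiv} for $\widetilde{\Wheel}$, one cannot simply invoke \ref{itm:Wiv} for $\Wheel$. One would like to pass to a realization $\realiz'$ of $\code$ in which $U_{\rim}$ is additionally convex and which agrees with $\realiz$ near the region where the transversal is drawn, apply \ref{itm:Wiv} for $\Wheel$ to $\realiz'$, and transport the segment back --- but the segment produced for $\Wheel$ lies in $U'_{\rim}$ rather than in $U'_{\widetilde{\rim}}$, and reconciling this requires either further ambient surgery or a genuinely new idea. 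As recalled in Section~\ref{sec:search}, it is open whether \ref{itm:Wiv} admits any combinatorial characterization, and both the surgery in part~\ref{itm:conj-i} and the transfer of \ref{itm:Wiv} in part~\ref{itm:conj-ii} are delicate for precisely this reason; a complete proof would likely proceed by isolating a combinatorially checkable sufficient condition for \ref{itm:Wiv} that survives the passage from $\rim$ to its max-intersection closure, generalizing what the sprocket, wire-wheel, and wheel-frame constructions accomplish only in special cases.
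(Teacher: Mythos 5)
This statement is a \emph{conjecture} in the paper; no proof of it is given there. The authors only verify it for the special classes of combinatorial wheels (sprockets, wire wheels, and, mutatis mutandis, wheel frames) via Propositions~\ref{prop:missingrim}, \ref{prop:bubble}, \ref{prop:wire-wheel-max-intersection}, and~\ref{prop:wheel-frame-max-intersection}, and explicitly leave the general case open. So there is nothing in the paper to compare your argument against, and your write-up --- which is an outline of an approach with the gaps clearly marked, not a completed proof --- is the appropriate genre of response.

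Your identification of where the difficulty sits is accurate. For part~\ref{itm:conj-i}, the obstruction is real: since~\ref{itm:Wiv} is a conditional statement, you must produce a realization of $\code$ in which $U_{\scfaceT}$ and all three $U_{\scfaceS_j}\cap U_{\scfaceT}$ are actually convex (so the condition is not vacuous) while the spoke traces sit in ``triangle position''; and you must do this while realizing the full trunk $\tk_{\code}(\scfaceT)$ inside $U_{\scfaceT}$ without disturbing the code elsewhere. That $\scfaceT\in\code$ provides nonempty interior slack is a plausible heuristic, but the intersection pattern dictated by $\tk_{\code}(\scfaceT)$ is a genuine constraint on where those blobs can go, and nothing in the paper supplies the surgery lemma you would need. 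For part~\ref{itm:conj-ii}, your reduction is the right one (P(i) and P(ii) pass to $\widetilde{\scfaceT}$ exactly as in the proof of Proposition~\ref{prop:bubble}, and once $\widetilde{\Wheel}$ is a wheel the max-intersection claim follows from part~\ref{itm:conj-i}), and you correctly pinpoint that the only missing step is transferring~\ref{itm:Wiv} from $\scfaceT$ to $\widetilde{\scfaceT}$; the containment $U_{\widetilde{\scfaceT}}\subseteq U_{\scfaceT}$ goes the wrong way for this to be automatic. In the sprocket, wire-wheel, and wheel-frame cases the paper gets around exactly this by having a \emph{combinatorial} surrogate for~\ref{itm:Wiv} that visibly persists under enlarging the rim; your suggestion that a general proof would need such a surrogate is, as far as the paper's discussion goes, the correct diagnosis. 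In short: not a proof, and the paper has none either; the gaps you flag are the genuine ones.
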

Conjecture~\ref{conj:bubbleup} holds for sprockets and wire wheels (Proposition~\ref{prop:missingrim}, Corollary~\ref{cor:tau-is-max}, and Proposition~\ref{prop:wire-wheel-max-intersection}).
Also, Conjecture~\ref{conj:bubbleup}\ref{itm:conj-ii} -- suitably adjusted to accommodate the fact that wheel frames generate but are not themselves wheels -- holds for wheel frames (Proposition~\ref{prop:wheel-frame-max-intersection}). 

We end this section by describing the intuition behind Conjecture~\ref{conj:bubbleup}\ref{itm:conj-ii}. 
We know that having a wheel %of any kind 
guarantees non-convexity (Theorem~\ref{thm:wheel}), while
being max-intersection-complete guarantees convexity (Proposition~\ref{prop:summary-prior-results}). 
Hence, the presence of a wheel in a code $\code$ must somehow force a max-intersection face to be excluded from $\code$. Our examples and results suggest that this excluded max-intersection face is always a rim.  
%Moreover, as noted above, Conjecture~\ref{conj:bubbleup}\ref{itm:conj-ii} holds for sprockets and wire wheels (and also, essentially, for wire frames).

\section{Reducible and decomposable codes} \label{sec:reduce-decompose}
One aim of this article is to use wheels to classify codes on 6 neurons.  
A first step is to exclude those codes that are somehow equivalent to codes on fewer neurons.  
Such codes are described in this section:
reducible codes -- those with irrelevant neurons (Section~\ref{sec:reduced-codes}) -- and decomposable codes -- those 
built from simpler codes (Section~\ref{sec:decomposable-codes}).  
We also prove some preliminary results on decomposable codes on up to 6 neurons (Section~\ref{sec:apply-dec-codes}).

In what follows, we use the following definition:  
let $\code$ be a code on $n$ neurons, and %$\scfaceS$ 
$\chi$
a subset of~$[n]$. 
 The \textit{restricted code} 
 $\code |_{\chi}$ is $\{c\cap \chi\ |\ c\in \code \}.$  If
 $\fullreal$ is a realization of $\code$, then 
we let 
$\realiz |_{\chi}$ denote the realization $\{U_i\}_{i\in \chi}$ of $\code |_{\chi}$.  

\begin{lemma} \label{lem:restrict}
Let $\code$ be a code on $n$ neurons, and let $\chi \subseteq [n]$.  If $\code$ is convex, then the restricted code $\code |_{\chi}$ is also convex.
\end{lemma}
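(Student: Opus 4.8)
The plan is to take a convex realization $\mathcal{U} = \{U_i\}_{i=1}^n$ of $\code$ in some $\mathbb{R}^d$ and show that its restriction $\mathcal{U}|_\chi = \{U_i\}_{i\in\chi}$ is a convex realization of $\code|_\chi$. Convexity of the sets $U_i$ for $i\in\chi$ is immediate since they are unchanged, so the entire content is checking that $\mathcal{U}|_\chi$ realizes the \emph{correct} code, namely $\code|_\chi = \{c\cap\chi \mid c\in\code\}$.

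First I would unwind the definition of a realization: for $c'\subseteq\chi$, I need $c'\in\code|_\chi$ if and only if the atom $A_{c'} := \left(\bigcap_{i\in c'}U_i\right)\smallsetminus\left(\bigcup_{j\in\chi\smallsetminus c'}U_j\right)$ is nonempty. The natural approach is pointwise: to each point $p\in\mathbb{R}^d$ associate its full "address" $c(p) := \{i\in[n]\mid p\in U_i\}$, which is a codeword of $\code$ precisely because $\mathcal{U}$ realizes $\code$; and observe that the restricted address $c(p)\cap\chi$ records exactly which of the sets $\{U_i\}_{i\in\chi}$ contain $p$. For the ($\Leftarrow$) direction, if $A_{c'}\neq\varnothing$ pick $p\in A_{c'}$; then $c(p)\cap\chi = c'$ and $c(p)\in\code$, so $c'\in\code|_\chi$. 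For the ($\Rightarrow$) direction, if $c'\in\code|_\chi$ then $c' = c\cap\chi$ for some $c\in\code$, and since $\mathcal{U}$ realizes $\code$ there is a point $p$ with $c(p)=c$; then $p\in A_{c'}$ for the restricted realization, so $A_{c'}\neq\varnothing$.

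There is essentially no hard step here — the lemma is a soft, definitional fact. The only thing to be mildly careful about is the convention $U_\varnothing := X$ together with Assumption~\ref{assum:empty-codeword}: since we may take $X=\mathbb{R}^d$ as stimulus space, the restricted realization also lives over $\mathbb{R}^d$ and contains $\varnothing$ as a codeword (its atom is the complement of $\bigcup_{i\in\chi}U_i$, which is nonempty or, if empty, handled by the standing convention that $\varnothing$ is always included). I would note in passing that one could alternatively phrase the argument via nerves, but the pointwise-address argument is cleanest and avoids any appeal to topology. So the proof is just: convexity of the sets is inherited trivially, and the address map $p\mapsto c(p)\cap\chi$ gives the required biconditional.
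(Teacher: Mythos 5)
Your proof is correct and follows exactly the paper's approach: restrict the convex realization $\{U_i\}_{i\in[n]}$ to $\{U_i\}_{i\in\chi}$ and verify it realizes $\code|_\chi$. The paper compresses this into a single sentence, while you spell out the pointwise-address verification that it leaves implicit; the content is identical.
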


\begin{proof}
If $\{U_i\}_{i \in [n]}$ is a convex realization of $\code$,
then $\{U_i\}_{i\in \chi }$ is a convex realization of 
$\code|_{\chi }$.
\end{proof}

\subsection{Reducible codes} \label{sec:reduced-codes}
 
The following terminology, introduced by Jeffs~\cite[\S 3]{jeffs2020morphisms}, captures the case of a superfluous neuron. 

%From the trunk we can then define what it means for a neuron to be redundant, and a code to be reducible.

% \cite[Definition 1.1]{jeffs2020morphisms} 
% \cite[Definition 3.2]{jeffs2020morphisms}
\begin{definition} \label{def:trunk-redundant-reduced}
Let $\code$ be a code on $n$ neurons.
\begin{enumerate}
    %\item \label{item:trunk}
    %For $\scfaceS \subseteq [n]$, the \textit{trunk} of $\scfaceS$ in $\code$ is the set $\tk_{\code}(\scfaceS):=\{c\in \code\ |\ \scfaceS \subseteq c\}$. {\color{red} This definition previously introduced in \ref{def:tk}.}
    \item[(1)] \label{item:trivial}
    A neuron $i$ is \textit{trivial} in $\code$ if $i$ is not in any codeword of $\code$, that is, $\tk_{\code}(\{i\}) = \varnothing$.
    \item[(2)] \label{item:redundant}
    A neuron $i$ is \textit{redundant} in $\code$ if  $i$ is not trivial in $\code$ and there exists $\scfaceS \subseteq [n]$ with $i\not\in \scfaceS$ such that 
        $\tk_{\code}(\{i\}) = \tk_{\code}(\scfaceS)$.
    \item[(3)] \label{item:reduced}
    The code $\code$ is \textit{reduced} if it has no redundant neurons and no trivial neurons. 
\end{enumerate}

We say that a code is \textit{reducible} if it is not reduced.

\end{definition}

The following result 
is essentially due to Jeffs (see \cite[Lemma 3.11 and Corollary 4.2]{jeffs2020morphisms}):

% WE SAY "ESSENTIALLY" HERE B/C JEFFS DIDN'T HANDLE TRIVIAL NEURONS AND MISSED THE SUBTLETY RE: EMPTY CODEWORD.
%follows directly from \cite[Theorem 1.3 and Lemma 3.11]{jeffs2020morphisms}:
\begin{proposition} \label{prop:reduced}
Let $\code$ be a code on $n$ neurons.  Assume $j$ is a trivial or redundant neuron of $\code$.  Then $\code$ is convex if and only if the restricted code $\code|_{[n] \smallsetminus \{j\} }$ is convex.
\end{proposition}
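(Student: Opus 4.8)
The plan is to obtain the ``only if'' direction immediately from Lemma~\ref{lem:restrict} (applied with $\chi = [n]\smallsetminus\{j\}$: a convex realization of $\code$ restricts to a convex realization of $\code|_{[n]\smallsetminus\{j\}}$), and to prove the ``if'' direction by reinserting the neuron $j$ into a convex realization of the restricted code. At the level of codes this reflects Jeffs's observation that such a $\code$ is the image of $\code|_{[n]\smallsetminus\{j\}}$ under a convexity-preserving morphism (\cite[Lemma~3.11, Corollary~4.2]{jeffs2020morphisms}); I would spell out the realization-level construction.

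For the reverse implication, write $\code' := \code|_{[n]\smallsetminus\{j\}}$ and let $\{U_i\}_{i\in[n]\smallsetminus\{j\}}$ be a convex realization of $\code'$ in some $\mathbb{R}^d$; I would define a set $U_j$ and check that $\{U_i\}_{i\in[n]}$ is then a convex realization of $\code$. If $j$ is trivial, set $U_j := \varnothing$. If $j$ is redundant, fix $\scfaceS\subseteq[n]\smallsetminus\{j\}$ with $\tk_{\code}(\{j\}) = \tk_{\code}(\scfaceS)$ and set $U_j := \bigcap_{i\in\scfaceS}U_i$, which is open and convex as a finite intersection of open convex sets (and $\scfaceS\neq\varnothing$, since $\tk_{\code}(\scfaceS) = \code$ would otherwise force $j$ into the empty codeword). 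In either case the new realization is convex, so the only point to verify is that it realizes $\code$.

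The combinatorial engine is the equivalence furnished by redundancy: for every $c\in\code$ one has $j\in c \iff \scfaceS\subseteq c$, and hence $\code' = \{c\in\code : j\notin c\}\cup\{c\smallsetminus\{j\} : c\in\code,\ j\in c\}$, so each codeword $c'$ of $\code'$ arises from exactly one of $c'$ or $c'\cup\{j\}$ in $\code$ -- the latter precisely when $\scfaceS\subseteq c'$. Using this, for each $d\subseteq[n]$ I would compare the atom $A_d := (\bigcap_{i\in d}U_i)\smallsetminus(\bigcup_{k\in[n]\smallsetminus d}U_k)$ with the atom of the corresponding codeword of $\code'$ in $\{U_i\}_{i\in[n]\smallsetminus\{j\}}$, splitting into cases on whether $j\in d$ and whether $\scfaceS\subseteq d$ (resp.\ $\scfaceS\subseteq d\smallsetminus\{j\}$). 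The key point is that $U_j = \bigcap_{i\in\scfaceS}U_i$ is always ``redundant'' with respect to $\{U_i\}_{i\in\scfaceS}$: when $\scfaceS\subseteq d$ it is absorbed into $\bigcap_{i\in d}U_i$ (or, if $j\notin d$, it forces $A_d = \varnothing$, matching that $\scfaceS\subseteq d$ with $j\notin d$ precludes $d\in\code$); and when $\scfaceS\not\subseteq d$, fixing $i_0\in\scfaceS\smallsetminus d$ gives $U_j\subseteq U_{i_0}$ with $U_{i_0}$ among the sets subtracted in $A_d$, so adjoining $U_j$ changes nothing. In every case one reads off $A_d\neq\varnothing \iff d\in\code$. (The trivial case is the degenerate instance $U_j=\varnothing$: unions are unchanged and $\bigcap_{i\in d}U_i\subseteq U_j=\varnothing$ for $d\ni j$, matching that no codeword of $\code$ contains $j$.)

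I expect the only real work to be this bookkeeping in the redundant case; the thing to watch is the correspondence between codewords of $\code'$ and codewords of $\code$ under restriction, which the equivalence ``$j\in c \iff \scfaceS\subseteq c$'' is exactly designed to control, so no step should be genuinely hard.
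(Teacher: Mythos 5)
Your proposal is correct and follows essentially the same route as the paper's proof: Lemma~\ref{lem:restrict} gives the forward direction, and for the reverse direction in the redundant case the paper likewise sets $U_j := U_\sigma$ after ruling out $\sigma = \varnothing$ via Assumption~\ref{assum:empty-codeword} (which is exactly your argument that $\tk_\code(\varnothing) = \code \ni \varnothing$ would force $j \in \varnothing$). The paper declares the atom-by-atom verification ``straightforward to check'' where you spell it out, and handles the trivial case by noting $\code = \code|_{[n]\smallsetminus\{j\}}$ rather than adjoining $U_j = \varnothing$, but the underlying construction and bookkeeping are the same.
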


\begin{proof}
If $j$ is trivial, then $\code = \code|_{[n] \smallsetminus \{j\}}$ and so the result follows. 

Now assume that $j$ is redundant.  
It follows that $\tk_{\code}(\{ j \}) = \tk_{\code}(\scfaceS)$ for some $\scfaceS \subseteq [n]$ with $j \notin \scfaceS$. 
The implication ``$\Rightarrow$'' is Lemma~\ref{lem:restrict}.  

For ``$\Leftarrow$'', assume that the restricted code $\code|_{[n] \smallsetminus \{j\} }$ is convex.  Let $\{ U_i \}_{i \in [n] \smallsetminus \{j\}}$ be a convex realization of the restricted code. 
We claim that $\sigma \neq \emptyset$.  Indeed, if $\sigma = \emptyset$, then 
because $\emptyset \in \code$ (recall Assumption~\ref{assum:empty-codeword}), 
%all codes in this article are assumed to contain the empty codeword), 
we have that 
$\emptyset \in 
\tk_{\code}(\scfaceS) =
\tk_{\code}(\{ j \}) 
$, which is a contradiction.  So, $\sigma$ is nonempty and hence $U_{\sigma}$ is a convex, open set.  Let $U_j:=U_{\sigma}$, and now it is straightforward to check that $\{U_i\}_{i \in [n]}$ is a convex realization of $\code$. 
\end{proof}
%In other words, a redundant neuron is such that its receptive field always coincides with the intersection of receptive fields of some other neurons. RECEPTIVE FIELD HAS NOT BEEN DEFINED

%The intuition behind the redundant neuron $i$ of a code $\code$ is that in any realization of $\code$, the receptive field $U_i$ of $i$ coincides precisely with the intersection of some other receptive fields. Therefore, one can obtain a realization of $\code$ simply by taking a realization of $\code|_{\alpha}$, where $\alpha$ is the set of all non-redundant neurons of $\code$, and then for each redundant neuron $i$ setting $U_i := U_{\scfaceS}$, where $\scfaceS \subseteq [n]$ is such that $\tk_(\{i\}) = \tk_{\code}(\scfaceS)$.  

\subsection{Decomposable codes} \label{sec:decomposable-codes}

%The reducible codes introduce an extraneous neuron $i$ in that the boundary of its receptive field $U_i$ is contained in the boundaries of the other receptive fields. However, what about the case where the boundary of the receptive field does not intersect any of the other receptive fields' boundaries?

%As an example, consider the code $\code = \{\varnothing, 1, 2, 3, 12, 13, 24, 123 \}$, a realization of which is given in Figure \ref{fig:decomp}. Note that the boundary of $U_4$ does not intersect the boundary of any other $U_i$.  Indeed, $4$ is not a redundant neuron, since $\tk_{\code}(4) = \{24\}$, and the only $\scfaceS \in \scplexC$ for which $\tk_{\code}(\scfaceS) = \{24\}$ is $\scfaceS = 24$, but $4\in \scfaceS$. While $\code$ is not a reduced code, one can construct a realization of $\code$ by taking a realization of $\{\varnothing, 1, 2, 3, 12, 13, 123\}$ and placing $U_4$ in the interior of $U_2$. It is this type of simplification that we are generalizing in introducing the family of decomposable codes. We call such a code $\code$ decomposable since the idea is to decompose $\code$ into two smaller codes $\code_1$ and $\code_2$. Thus, a convex realization of $\code$, if it exists, can be obtained by ``embedding'' a convex realization of $\code_1$ into that of $\code_2$ (see Theorem~\ref{thm:decomposable}). Accordingly, we introduce the following definition.

This subsection considers codes $\code$ that can be ``decomposed'' into
% are built from
two smaller codes $\code_1$ and $\code_2$ so that a convex realization of $\code$, if it exists, can be obtained by ``embedding'' a convex realization of $\code_1$ into that of $\code_2$ (see Figure~\ref{fig:real-6neur} and Theorem~\ref{thm:decomposable}).  %Accordingly, we introduce the following definition.

\begin{definition}
A code $\code$ on $n$ neurons is \textit{decomposable} if there exist disjoint subsets $\varphi, \psi \subsetneq [n]$ with $\varphi \neq \varnothing$ 
such that:
\begin{enumerate}
    \item[(i)] \label{itm:decompose-1}
         $\psi\in \code$, and 
    \item[(ii)] \label{itm:decompose-2}
    every $c\in \code$ that contains at least one neuron of $\varphi$ has the form 
    $c = \widetilde{\varphi} \cup \psi$ for some $\widetilde{\varphi} \subseteq \varphi$.
%    $c = \chi \cup \psi$ for some $\chi \subseteq \varphi$.
\end{enumerate}
%with $\varphi\neq \varnothing$ such that every $c\in \code$ that intersects $\varphi$ has the form $c = \chi \cup \psi$ for some $\chi \subseteq \varphi$  {\color{red} Do we also need to assume that $\varphi$ is a codeword of $\code$?}.  
We call $\code |_{\varphi}$ the \textit{embedded code} and 
$\code |_{[n]\smallsetminus \varphi}$ the \textit{ambient code}.  Also, $\psi$ is the \textit{ambient codeword}.
\end{definition}

%\begin{figure}[ht]
%\begin{center}
%\begin{tikzpicture}[scale=0.6]
%
%\def\UOne{(0,0) circle (3.3cm)}
%\def\UTwo {(4,0) ellipse (6cm and 3cm)}
%%\def\UThree {(4.5,0) ellipse (5cm and 2 cm)}
%\def\UFour {(-2.5,0) circle (1.5cm)}
%%\def\UFive {(5.5,0) circle (1cm)}
%\def\USix {(6.25,0) circle (1cm)}
%
%\tikzset{invclip/.style={clip,insert path={{[reset cm]
%        (-16383.99999pt,-16383.99999pt) rectangle (16383.99999pt,16383.99999pt)}}}}
%
%
%\draw\UOne;
%\draw\UTwo;
%\draw\UFour;
%\draw\USix;
%%\draw (-2.687,2.687) node {{\color{blue} \Large$U_1$}};
%\draw (-2.687,2.687) node {\Large$U_1$};
%\draw (10.5,0) node {\Large$U_2$};
%%\draw (4,2.4) node {{\color{black} \Large$U_3$}};
%\draw (-4.65,0) node { \Large$U_3$};
%%\draw (7.75,0) node {{\color{cyan} \Large$U_5$}};
%\draw (4.5,0) node {\Large$U_4$};
%
%\end{tikzpicture}
%
%\caption{A realization of the code $\code = \{\varnothing, 1, 2, 3, 12, 13, 24, 123\}$.}\label{fig:decomp}
%\end{center}
%%
%\end{figure}
%
%\begin{example}
%The code $\code$ with a realization given in Figure~\ref{fig:decomp} is decomposable, with $\phi = \{4\}$ and $\psi = \{2\}$. 
%\end{example}

\begin{example} \label{ex:code-decomposable-again} 
The code  
$\code = \{ {\bf 2356, 123}, 235, 236, 12, 14, 23, 1,2,4,\varnothing\}$
%$\code = \{\varnothing, 1,2,4, 12, 14, 23, 123, 235, 236, 2356\}$ 
from Example~\ref{ex:code-decomposable} is decomposable, where $\varphi = \{5,6\}$ and $\psi = \{2,3\}$. 
Indeed, the codewords $c\in \code$ intersecting $\varphi$ are 
$235 =  \{5\} \cup \psi$, 
$236 = \{6\} \cup \psi $, and $2356 = \{5,6\} \cup \psi $.
Recall the realization $\realiz$ of $\code$
depicted in Figure~\ref{fig:real-6neur}.
Note that $\realiz$ can be obtained by first drawing a realization of 
$\code |_{[n]\smallsetminus \varphi} = 
\{ {\bf 123, 14}, 
12,  23, 
1, 2, 4, 
\varnothing
\}$ 
and then placing a realization of 
$\code |_{\varphi} = 
\{ {\bf 56},
5,6, 
\varnothing
\}$ inside the 
atom of the ambient codeword 23. % in the realization of $\code |_{[n]\smallsetminus \varphi}$.
% ambient atom $\atom^{1234}_{56} = U_{23}\smallsetminus (U_1 \cup U_4)$.  
\end{example}

\subsection{Application to codes on up to 6 neurons} \label{sec:apply-dec-codes}

We saw 
that for the code $\code$
in Example~\ref{ex:code-decomposable-again}, 
a convex realization was obtained by placing a convex realization of the embedded code inside that of the ambient code.  
This was possible because the atom of the ambient codeword has nonempty interior.  Is this always possible?  
%Can this always be done?  
%That is, given an ambient codeword $\psi$ and an ambient code 
%$\code |_{[n]\smallsetminus \varphi}$ that is convex, does there exist a convex realization of the ambient code for which the atom of the ambient codeword $\calA_{\psi}$ has nonempty interior?  
The main result of this subsection, Theorem~\ref{thm:decomposable}, 
gives an affirmative answer 
%states that this procedure works 
for codes on up to 6 neurons.  We do not know, however, whether this theorem extends to codes on more neurons.

\begin{theorem}\label{thm:decomposable}
 Suppose that $\code$ is a decomposable code on up to 6 neurons
 with embedded code $\code |_{\varphi}$ and ambient code $\code |_{[n]\smallsetminus \varphi}$.
 %, and ambient atom $\atom^{[n]\smallsetminus \varphi}_{\psi}$. 
 Then $\code$ is convex if and only if $\code |_{\varphi}$ and $\code |_{[n]\smallsetminus \varphi}$ are convex.
\end{theorem}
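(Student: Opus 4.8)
The plan is to prove both directions separately, with the forward direction being routine and the reverse direction being the substantial one. For the forward direction ($\Rightarrow$): if $\code$ is convex, then by Lemma~\ref{lem:restrict} both restricted codes $\code|_{\varphi}$ and $\code|_{[n]\smallsetminus\varphi}$ are convex, since each is a restriction of $\code$ to a subset of $[n]$. That is immediate.

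For the reverse direction ($\Leftarrow$), suppose the embedded code $\code|_{\varphi}$ and the ambient code $\code|_{[n]\smallsetminus\varphi}$ are each convex. The idea is geometric: take a convex realization $\{V_i\}_{i\in[n]\smallsetminus\varphi}$ of the ambient code in some $\bbR^{d_1}$, and a convex realization $\{W_i\}_{i\in\varphi}$ of the embedded code in some $\bbR^{d_2}$. Since $\psi\in\code|_{[n]\smallsetminus\varphi}$, the atom of $\psi$ in the ambient realization is a nonempty open set, so we may take an open ball $B$ inside it. After rescaling and translating, place a copy of the embedded realization so that each $W_i$ lies inside $B$; more precisely, work in $\bbR^{d_1}\times\bbR^{d_2}$ (or, by taking the right dimensions, possibly lower), set $U_i := V_i\times\bbR^{d_2}$ for $i\in[n]\smallsetminus\varphi$, and $U_i := (\text{a thin neighborhood of } B \text{ in } \bbR^{d_1})\times W_i'$ for $i\in\varphi$, where the $W_i'$ are suitably scaled copies of the $W_i$ chosen so that the ``product region'' sits entirely over $B$ and hence inside the atom of $\psi$. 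One then checks directly from the decomposability conditions (i) and (ii) that the resulting collection $\{U_i\}_{i\in[n]}$ is a realization of $\code$: codewords not meeting $\varphi$ come from the ambient realization (intersecting with all of $\bbR^{d_2}$ does not change their atoms), while codewords meeting $\varphi$ must, by condition (ii), have the form $\widetilde\varphi\cup\psi$, and these correspond exactly to the codewords $\widetilde\varphi$ of the embedded code transported into the atom of $\psi$.

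The catch — and the reason the theorem is restricted to $n\le 6$ rather than being a formality — is that the above construction requires each $U_i$ for $i\in\varphi$ to be \emph{convex}, and a thin tubular neighborhood of a ball times a convex set is indeed convex, so on the face of it the construction seems to work in all dimensions. The actual obstruction is more subtle: the atom of $\psi$ in an \emph{arbitrary} convex realization of the ambient code need not contain a ball whose closure stays within the atom in a way compatible with convexity of the enlarged sets $U_i$, $i\in\varphi$ — one needs the atom to have nonempty interior as a subset of the full ambient space and one needs to avoid the other ambient sets $V_j$ cutting into the inserted region. Guaranteeing a realization of the ambient code in which the atom of $\psi$ is ``fat enough'' (e.g., contains an interior point of $U_\psi$ away from all $U_j$ with $j\notin\psi$) is exactly where \emph{nondegenerate convexity} enters: by the results of Cruz et al.~\cite{cruz2019open} cited in the introduction, convex codes on few neurons admit nondegenerate convex realizations, in which every atom that is nonempty is also \emph{open} (has nonempty interior). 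This is known for codes on up to $5$ neurons via the classifications~\cite{what-makes, curto2013neural, goldrup2020classification}, and the ambient code here has at most $5$ neurons (since $\varphi\neq\varnothing$), while separately one handles the embedded code. I expect the proof to proceed by: (1) dispatch the forward direction via Lemma~\ref{lem:restrict}; (2) invoke nondegeneracy of a convex realization of the ambient code to obtain a realization in which the atom of $\psi$ is open; (3) pick an open ball inside that atom, disjoint from all $U_j$ with $j\notin\psi$; (4) insert a rescaled nondegenerate convex realization of the embedded code into (a thin slab over) that ball; (5) verify via conditions (i)–(ii) that the combined family realizes $\code$ and that each set is convex. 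The main obstacle is step (2)–(3): ensuring the inserted realization does not collide with the ambient sets and that all enlarged sets remain convex, which is precisely what forces the $n\le 6$ hypothesis and the appeal to the nondegeneracy results and prior low-neuron classifications.
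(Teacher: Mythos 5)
Your proposal is correct and follows essentially the same route as the paper: the forward direction via Lemma~\ref{lem:restrict}, and the reverse direction by inserting a rescaled convex realization of the embedded code into an open ball inside the atom of $\psi$ in a top-dimensional (nondegenerate) convex realization of the ambient code, with the $n\le 6$ restriction entering exactly as you say---because the ambient code (and embedded code) lives on at most $5$ neurons, one can upgrade ``convex'' to ``top-dimensionally convex'' via the machinery of Cruz et al.\ and the $5$-neuron classification (the paper packages this as Propositions~\ref{prop:decomposable-convex} and~\ref{prop:fiveneur-nondeg}). Your product-slab construction in $\bbR^{d_1}\times\bbR^{d_2}$ is a minor cosmetic variant of the paper's, which instead elevates both realizations to a common $\bbR^d$ and scales the embedded one into a ball; otherwise the argument you outline is the one given.
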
 

The proof of Theorem~\ref{thm:decomposable}, which appears at the end of this section, uses the idea of nondegeneracy introduced by Cruz \textit{et al.}~\cite[\S 2]{cruz2019open}.  Indeed, 
 condition~\ref{itm:nondegen-1} of the following definition 
 guarantees that, for a decomposable code, the atom of the ambient codeword %$\psi$ 
 %in the ambient code 
 has full dimension, and hence we can place within it a realization of the embedded code (see Proposition~\ref{prop:decomposable-convex} below).

\begin{definition}\cite[Definition 2.10]{cruz2019open}  \label{def:nondeg} %[Definition 2.10]
For a collection %cover 
$\fullreal$ of subsets of %, with $U_i \subseteq 
$\bbR^d$, 
consider the following properties:
\begin{description}
    \item[\namedlabel{itm:nondegen-1}{(i)}] 
    For all
    $\scfaceS \subseteq [n]$, 
    %$\scfaceS \in {\color{red} \code(\realiz, \bbR^d) }$, the atoms 
    the set  $(\cap_{i\in c}U_i) \smallsetminus (\cup_{j \in [n] \smallsetminus c}U_j)$ is either empty or 
    % $\atom^{\realiz}_{\scfaceS}$ are 
    \textit{top-dimensional}, i.e., every nonempty intersection with an open subset
    of $\bbR^d$ has nonempty interior. 
    \item[\namedlabel{itm:nondegen-2}{(ii)}] For all nonempty $\scfaceS\subseteq [n]$, we have $\bigcap_{i\in \scfaceS} \partial U_i\subseteq \partial (\bigcap_{i\in \scfaceS}U_i).$
\end{description}
Then $\realiz$ is \textit{top-dimensional} if condition (i) holds, and is \textit{nondegenerate} if 
both~\ref{itm:nondegen-1} and~\ref{itm:nondegen-2} hold.
\end{definition}
We say that a code $\code$ is 
\textit{top-dimensionally convex} (respectively, 
\textit{nondegenerately convex}) if it has a convex realization $\fullreal$ that is top-dimensional (respectively, nondegenerate).  

\begin{proposition} \label{prop:decomposable-convex}
 Suppose that $\code$ is a decomposable code with embedded code $\code |_{\varphi}$ and ambient code $\code |_{[n]\smallsetminus \varphi}$.  
If $\code |_{\varphi}$ is 
top-dimensionally %{\color{red} nondegenerately} 
convex and $\code |_{[n]\smallsetminus \varphi}$ is convex, then $\code$ is convex.
\end{proposition}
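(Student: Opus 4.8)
The plan is to take a top-dimensional convex realization $\realiz' = \{V_i\}_{i \in \varphi}$ of the embedded code $\code|_{\varphi}$ in some $\bbR^{d_1}$, and a convex realization $\realiz'' = \{W_k\}_{k \in [n]\smallsetminus\varphi}$ of the ambient code $\code|_{[n]\smallsetminus\varphi}$ in some $\bbR^{d_2}$, and glue them together to build a convex realization of $\code$. The key geometric fact I will use is that, since $\psi \in \code|_{[n]\smallsetminus\varphi}$ (because $\psi \in \code$ and $\psi \subseteq [n]\smallsetminus\varphi$), the atom of $\psi$ in $\realiz''$ is nonempty; and $\code|_{[n]\smallsetminus\varphi}$ can be chosen top-dimensional too (I may need to invoke a result of Cruz \textit{et al.}\ that convex codes are top-dimensionally convex, or simply perturb; I would cite~\cite{cruz2019open} here), so this atom contains a nonempty open ball $B$. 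After an affine change of coordinates I can assume $B$ is a small round ball around the origin in $\bbR^{d_2}$.

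First I would set the ambient dimension to $d = d_1 + d_2$ and embed $\realiz''$ as $W_k \times \bbR^{d_1} \subseteq \bbR^{d_2} \times \bbR^{d_1}$; these sets are convex and open, and their nerve/atom structure in the first $d_2$ coordinates is unchanged, so the restriction of the new configuration to $[n]\smallsetminus\varphi$ still realizes $\code|_{[n]\smallsetminus\varphi}$. Next, after scaling $\realiz'$ so that all $V_i$ are contained in a bounded region, I would place a scaled, translated copy of each $V_i$ inside the ``slab'' $B \times \bbR^{d_1}$, say $U_i := B' \times V_i$ where $B' \subseteq B$ is a slightly smaller concentric ball (or more carefully, $U_i$ is an open convex ``tube'' lying over $B'$ whose fiber over each point of $B'$ is $V_i$ — using a product $B' \times V_i$ keeps things convex). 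For $k \in [n]\smallsetminus\varphi$, set $U_k := W_k \times \bbR^{d_1}$. All $U_i$ are convex and open.

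The heart of the proof is then checking that $\{U_i\}_{i \in [n]}$ realizes $\code$, which I would do by analyzing atoms according to whether a codeword meets $\varphi$. For $c \in \code$ with $c \cap \varphi = \varnothing$: the atom of $c$ should match the atom of $c$ in the enlarged ambient realization, intersected appropriately with the complement of the tubes $U_i$, $i \in \varphi$; since each $U_i$ with $i \in \varphi$ projects into $B \subseteq$ (atom of $\psi$), removing the tubes only affects points projecting into the atom of $\psi$, so for $c$ not of the form $\widetilde\varphi \cup \psi$ the atom is essentially unaffected (nonempty iff $c \in \code|_{[n]\smallsetminus\varphi}$ iff $c \in \code$), using decomposability condition (ii). For $c = \widetilde\varphi \cup \psi$ with $\widetilde\varphi \subseteq \varphi$: a point of $U_c$'s atom must lie over $B'$ (to be in the tubes for $\widetilde\varphi$, or to avoid tubes if $\widetilde\varphi = \varnothing$) and in $W_k$ for $k \in \psi$, not in $W_k$ for $k \in ([n]\smallsetminus\varphi)\smallsetminus\psi$; restricting to such points, the $d_1$-coordinate sees exactly the configuration $\{V_i\}$, so the atom is nonempty iff the atom of $\widetilde\varphi$ in $\realiz'$ is nonempty, i.e.\ iff $\widetilde\varphi \in \code|_{\varphi}$, i.e.\ iff $\widetilde\varphi \cup \psi \in \code$ by the definition of decomposability and the embedded code.

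The main obstacle I anticipate is the bookkeeping at the ``boundary'' of the tubes: I must ensure that inserting the tubes $U_i$ ($i\in\varphi$) does not create spurious atoms (codewords not in $\code$) nor destroy existing ones — in particular that a point can be ``in $U_i$ for some $i \in \varphi$ but not in $W_k$ for the required $k \in \psi$'' never happens, which is exactly why the tubes must sit over $B$, a subset of the atom of $\psi$ (where all $W_k$, $k \in \psi$, are present and all $W_k$, $k \notin \psi \cup \varphi$, are absent). The top-dimensionality hypothesis on $\code|_{\varphi}$ is what lets me shrink $\realiz'$ to fit inside the fiber direction while keeping every atom nonempty — without it, a lower-dimensional atom of the embedded code might be ``squeezed out.'' Once these containments are set up carefully, the atom computations are routine, so I would present the tube construction in detail and then treat the two cases of codewords tersely.
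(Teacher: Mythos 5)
Your tube-product construction is a valid geometric alternative to the paper's proof, which places both realizations in a common $\mathbb{R}^d$ (by taking products with open cubes) and then, rather than forming tubes, simply applies one scaling bijection $F$ carrying a ball $\widetilde B$ containing all the $V_i$ onto a ball $B$ inside the ambient atom of $\psi$, taking $F(V_i)$ together with the unchanged $W_i$ as the realization of $\code$.  Your version produces the same code once the ball $B$ exists, so the overall strategy is sound and essentially equivalent, just slightly heavier.

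The gap is in how you obtain $B$.  You correctly observe that the ambient realization of $\code|_{[n]\smallsetminus\varphi}$ must be top-dimensional so that the atom of $\psi$ has nonempty interior, but you then try to get this for free, citing ``a result of Cruz \textit{et al.} that convex codes are top-dimensionally convex.''  No such result exists in \cite{cruz2019open}; in fact Proposition~\ref{prop:fiveneur-nondeg} of this very paper establishes that equivalence only for codes on at most five neurons, and the surrounding discussion says it is not expected to hold in general.  ``Simply perturb'' is not a rigorous substitute either, since perturbing convex sets cannot fatten a degenerate atom without changing the code.  Correspondingly, your stated reason for needing top-dimensionality of $\code|_{\varphi}$ is wrong: rescaling $\realiz'$ by an affine bijection is a homeomorphism and preserves every atom exactly, so no atom gets ``squeezed out.''  The top-dimensionality hypothesis is genuinely doing work on the \emph{ambient} side, not the embedded side.  (You should also note that the printed statement has the two hypotheses reversed relative to what the proof requires; the paper's own proof chooses the realization of $\code|_{[n]\smallsetminus\varphi}$, not of $\code|_{\varphi}$, to be top-dimensional.)  Once that bookkeeping is corrected, your tube construction goes through.
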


\begin{proof}
Assume that $\code |_{\varphi}$ is 
top-dimensionally %{\color{red} nondegenerately} 
convex and $\code |_{[n]\smallsetminus \varphi}$ is convex.  Relabel the neurons so that $\varphi = \{1,2,\dots, k\}$ and $[n] \smallsetminus \varphi = \{k+1,k+2, \dots, n \}$.  
Let $\mathcal{V}= \{V_i\}_{i=1}^k$ be a convex realization for $\code |_{\varphi}$ and 
 $\mathcal{W}= \{W_i\}_{i=k+1}^n$ a 
 top-dimensional %{\color{red} nondegenerately} 
 convex realization for $\code |_{[n] \smallsetminus \varphi}$.  We may assume that both $\mathcal{V}$ and $\mathcal{W}$ are realizations in $\mathbb{R}^d$, for some $d$.  (Indeed, every realization $\mathcal{U}$ in some $\mathbb{R}^{d_1}$ can be elevated to a realization in $\mathbb{R}^{d_1+d_2}$ by taking the product of each $U_i$ in $\mathcal{U}$ with $(0,1)^{d_2}$.)
 
 As $\mathcal{W}$ is 
 top-dimensional 
 %{\color{red} nondegenerate} 
 and $\psi$ is a codeword of $\code$, there exists an open ball $B$ in $\mathbb{R}^d$ that is strictly contained in the atom for the codeword $\psi$.  Also, recall that we may assume that every $V_i$ in the realization $\mathcal{V}$ is contained in an open ball $\widetilde{B}$ in $\mathbb{R}^d$ (cf.~\cite[Remark 2.19]{new-obs}).  Let $F: \widetilde{B} \to B$ be the scaling bijection between the two balls (which 
 preserves convexity and is a homeomorphism).
 Then, by construction, $\{ F(V_i) \}_{i=1}^k \cup \{ W_i \}_{i=k+1}^n$ is a convex realization of $\code$. 
\end{proof}

In light of Proposition~\ref{prop:decomposable-convex}, we need only show that convexity and 
top-dimensional 
%{\color{red} nondegenerate} 
convexity are equivalent for codes on up to 5 neurons (see Proposition~\ref{prop:fiveneur-nondeg} below), 
and then Theorem~\ref{thm:decomposable} will follow.  
To prove Proposition~\ref{prop:fiveneur-nondeg}, we need two results on nondegenerate convexity,
the first of which follows directly from the results of Cruz \textit{et al.}
\cite[Proposition 4.3 and Lemma 3.1]{cruz2019open}:
%,  prove that convexity implies convex nondegeneracy on up to $5$ neurons:

\begin{lemma} \label{lem:maxint-nondeg}
Every max-intersection-complete code is nondegenerately convex.
\end{lemma}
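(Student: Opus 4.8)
The plan is to derive Lemma~\ref{lem:maxint-nondeg} directly from the two cited results of Cruz \textit{et al.}, stitching them together with the definitions already in place. Recall that \cite[Proposition 4.3]{cruz2019open} shows that every max-intersection-complete code has a convex realization in which the sets are (open) convex polytopes, or at least admits a convex realization of a suitably ``generic'' type; and \cite[Lemma 3.1]{cruz2019open} is the statement that such a realization can be perturbed or chosen so as to be nondegenerate (in the sense of Definition~\ref{def:nondeg}). So the skeleton of the argument is: take $\code$ max-intersection-complete; invoke \cite[Proposition 4.3]{cruz2019open} to obtain a convex realization $\realiz$ of a controlled form; then invoke \cite[Lemma 3.1]{cruz2019open} to conclude that (a realization built from) $\realiz$ is nondegenerate, i.e.\ satisfies both~\ref{itm:nondegen-1} and~\ref{itm:nondegen-2}. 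This immediately gives that $\code$ is nondegenerately convex.

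First I would recall precisely what \cite[Proposition 4.3]{cruz2019open} provides for a max-intersection-complete code: a convex realization whose pieces are, say, open convex polytopes in general position, or more precisely a realization in which the boundary hyperplanes are chosen generically. The point is that genericity of the defining hyperplanes forces every nonempty atom $(\cap_{i\in c}U_i)\smallsetminus(\cup_{j\notin c}U_j)$ to be full-dimensional, which is condition~\ref{itm:nondegen-1}, and forces the boundary-containment condition $\bigcap_{i\in\scfaceS}\partial U_i \subseteq \partial(\bigcap_{i\in\scfaceS}U_i)$, which is~\ref{itm:nondegen-2}, to hold; this is exactly the content of \cite[Lemma 3.1]{cruz2019open}, which says a realization by convex open sets in ``general position'' is nondegenerate. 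Second I would simply assemble these: the realization from \cite[Proposition 4.3]{cruz2019open} is convex and, by \cite[Lemma 3.1]{cruz2019open}, nondegenerate, hence witnesses that $\code$ is nondegenerately convex. Since a nondegenerate realization is in particular top-dimensional, this also records the fact (used in the next proposition) that a max-intersection-complete code is top-dimensionally convex.

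There is essentially no main obstacle here: the lemma is a bookkeeping consequence of two black-box results. The only thing requiring mild care is matching the hypotheses of the cited results to the definitions as phrased in this paper — in particular confirming that the notion of ``nondegenerate'' in Definition~\ref{def:nondeg} coincides with the one in \cite[\S 2]{cruz2019open} (it does, by design, since Definition~\ref{def:nondeg} is quoted verbatim from \cite[Definition 2.10]{cruz2019open}), and confirming that \cite[Proposition 4.3]{cruz2019open} indeed outputs a realization to which \cite[Lemma 3.1]{cruz2019open} applies. Once that is checked, the proof is a one-line citation chain: max-intersection-complete $\xRightarrow{\text{Prop.\ 4.3}}$ convex realization in general position $\xRightarrow{\text{Lemma 3.1}}$ nondegenerate, hence nondegenerately convex.
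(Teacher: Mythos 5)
Your overall strategy is the same as the paper's — the paper's proof is a one\-liner: \cite[Proposition 4.3 and Lemma 3.1]{cruz2019open} together construct a convex realization for every max\-intersection\-complete code, and ``It is straightforward to verify that this realization is also nondegenerate.'' So both you and the authors view this as a citation chain followed by an inspection of the explicit construction.

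However, your reconstruction of the division of labor between the two cited results is not accurate, and this slightly misstates where the work lies. In Cruz \emph{et al.}, Lemma~3.1 is not a ``realizations in general position are nondegenerate'' statement; it is a realization\-extension lemma (building a realization of a larger code $\calD$ from a realization of a smaller code $\code\subseteq\calD$ by adding/enlarging sets), and Proposition~4.3 uses it to give the ``every max\-intersection\-complete code is convex'' construction. You can see this reflected in how the present paper uses Lemma~3.1 later: in the proof of the monotonicity Lemma~\ref{lem:nondeg-monotone}, the authors need to \emph{strengthen} \cite[Lemma~3.1]{cruz2019open} to preserve top\-dimensionality, which would be vacuous if Lemma~3.1 already output nondegenerate realizations. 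So the nondegeneracy of the realization constructed by Cruz \emph{et al.}\ is not a black\-box output of either cited result; it is a property one must check by hand from the explicit construction (and that check, which the paper labels ``straightforward,'' is the actual mathematical content of Lemma~\ref{lem:maxint-nondeg}). Your proof elides this check by attributing it to Lemma~3.1, and as written that invocation would not go through if one tried to formalize it. To repair the argument you would either unpack the construction in \cite[\S 3--4]{cruz2019open} and verify conditions \ref{itm:nondegen-1}--\ref{itm:nondegen-2} directly, or explicitly flag the verification as a routine but unexported step, as the paper does.
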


\begin{proof}
\cite[Proposition 4.3 and Lemma 3.1]{cruz2019open} construct a convex realization for every max-intersection-complete code. It is straightforward to verify that this realization is also nondegenerate.
\end{proof}

The following result is also essentially due to Cruz \textit{et al.}~\cite{cruz2019open}:
\begin{lemma}[Monotonicity of top-dimensional and nondegenerate convexity] %\cite[Lemma 3.1]{cruz2019open}
\label{lem:nondeg-monotone}
Let $\code$ be a neural code, and let 
$\calD$ be a neural code such that $\code \subseteq \calD \subseteq \scplexC$.
\begin{itemize}
    \item[(i)] \label{item:top-dim-monotone}
    If $\code$ is top-dimensionally convex, then $\calD$ is also top-dimensionally convex.     

    \item[(ii)] \label{item:nondeg-monotone}
    If $\code$ is nondegenerately convex, then $\calD$ is also nondegenerately convex.    
\end{itemize}
\end{lemma}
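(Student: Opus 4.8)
The plan is to prove both parts by a one-codeword-at-a-time induction. Enumerate $\calD \smallsetminus \code = \{c_1,\dots,c_m\}$ and set $\code_0 = \code$ and $\code_t = \code_{t-1}\cup\{c_t\}$ for $t \geq 1$, so that $\code_m = \calD$. Every $\code_t$ satisfies $\code \subseteq \code_t \subseteq \scplexC$, hence has neural complex $\scplexC$; in particular each $c_t$ is a face of the nerve of any realization of $\code_{t-1}$. So it suffices to show: if $\code'$ is a top-dimensionally (resp.\ nondegenerately) convex code and $c \in \scplexC \smallsetminus \code'$, then $\code' \cup \{c\}$ is top-dimensionally (resp.\ nondegenerately) convex; applying this $m$ times gives the lemma.

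For the single step I would argue as follows. Fix a top-dimensional (resp.\ nondegenerate) convex realization $\fullreal$ of $\code'$ in some $\bbR^d$. Since $c$ is a face of the nerve of $\realiz$, the set $U_c := \cap_{i\in c}U_i$ is a nonempty open convex set; since $c \notin \code'$, the atom of $c$ in $\realiz$ is empty, so $U_c$ is covered by the atoms of the finitely many codewords of $\code'$ that strictly contain $c$. Pick one such codeword $c'$; because $\realiz$ is top-dimensional, the atom $A_{c'}$ is full-dimensional. Following the construction of Cruz \textit{et al.}, I would perform a local surgery: shrink each of the finitely many sets $U_j$ with $j \in c'\smallsetminus c$ (by intersecting with a suitable half-space, i.e.\ a small perturbation confined to a neighborhood of a well-chosen point of $A_{c'}$), so as to carve out a small full-dimensional ``pocket'' which lies in $U_i$ for every $i\in c$ but in no $U_j$ for $j\notin c$. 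This pocket is exactly an atom for $c$, so the modified configuration realizes $\code' \cup \{c\}$, provided one checks that every pre-existing atom keeps its full dimension (true for a sufficiently small perturbation, since each is full-dimensional to begin with) and that no other new codeword appears. The resulting realization is convex and still top-dimensional, which proves (i) for the step; for (ii) one further verifies that the half-spaces can be chosen so that condition~\ref{itm:nondegen-2}, namely $\cap_{i\in\scfaceS}\partial U_i \subseteq \partial(\cap_{i\in\scfaceS}U_i)$ for all $\scfaceS \subseteq [n]$, is preserved — this last point is the content of the relevant lemma of~\cite{cruz2019open}.

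The main obstacle is precisely the surgery step: one must position the cut (the point of $A_{c'}$ near which we carve, and the depth and orientation of the half-spaces) so that \emph{simultaneously} (a) a genuinely full-dimensional atom for $c$ is created, (b) no pre-existing atom is destroyed or loses full dimension, (c) no unwanted codeword is introduced, and — for part (ii) — (d) the boundary-transversality condition~\ref{itm:nondegen-2} continues to hold for every $\scfaceS$. Conditions (b)--(d) are the technical heart and are exactly what I would import from Cruz \textit{et al.}~\cite{cruz2019open}; the top-dimensional statement (i) then follows by running the identical argument while disregarding condition~\ref{itm:nondegen-2} throughout.
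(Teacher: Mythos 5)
Your proposal is correct and is essentially the same as the paper's: both reduce the monotonicity to the single-codeword ``surgery'' step established by Cruz \textit{et al.}, with (ii) following from their nondegenerate version and (i) from observing that the same construction goes through when one only tracks top-dimensionality. The only difference is presentational -- the paper invokes \cite[Theorem~1.3]{cruz2019open} directly and spells out the exact textual edits needed to Lemmas~3.1 and~A.7 of that paper to obtain the top-dimensional variant, whereas you re-derive the induction scaffold and sketch the surgery before delegating the hard conditions (b)--(d) to the same source.
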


\begin{proof}
Both 
(i) and 
(ii) follow directly from the proof of~\cite[Theorem~1.3]{cruz2019open}, if we can add the following assertion to the statement of~\cite[Lemma~3.1]{cruz2019open}: ``Also, if $\realiz$ is top-dimensional, then $\calV$ can also be chosen to be top-dimensional''.  Indeed, the proof of~\cite[Lemma~3.1]{cruz2019open} directly accommodates this extra assertion once we add the following statement (which is identical to the prior one) to~\cite[Lemma~A.7]{cruz2019open}: ``Also, if $\realiz$ is top-dimensional, then $\calV$ can also be chosen to be top-dimensional.'' 
The proof of this assertion is achieved by starting from the proof of~\cite[Lemma~A.7]{cruz2019open}, replacing the two occurrences of the word ``non-degenerate'' with ``top-dimensional'', and then deleting the final sentence of that proof.
\end{proof}

In light of Lemma~\ref{lem:nondeg-monotone}, to show top-dimensional convexity for convex codes on up to 5 neurons, 
we can proceed by analyzing one simplicial complex $\scplex$ at a time: 
it suffices to check that $\mincode$,  the minimal code of $\scplex$,
is top-dimensionally convex.  
(Recall that 
$\mincode$ 
is the smallest code -- with respect to inclusion -- with neural complex $\scplex$ that has no local obstructions.)

However, there is one simplicial complex for which a different approach must be taken,
namely, the simplicial complex of 
the non-convex code $\mathcal{C} ^{\star}$ (with no local obstructions) from Example~\ref{ex:lienshiu}:
\begin{align} \label{eq:complex-counterexample-code}
    \scplexCstar ~=~ \textrm{the simplicial complex with facets }2345, ~ 123, ~ 134, ~145~.
\end{align}
Note that  
$\mincodestar = \code^{\star}$. 
Part~\ref{item:counterexample-cpx} of the next result clarifies which codes with neural complex~\eqref{eq:complex-counterexample-code} are convex, while 
part~\ref{item:5-neuron-min-code}, 
which is due to Goldrup and Phillipson~\cite{goldrup2020classification}, pertains to all other simplicial complexes on 5 vertices.

\begin{proposition}[Convexity of codes on 5 neurons] \label{prop:five-neur-top-dim} 
Let $\scplex$ be a connected simplicial complex on 5 vertices.
\begin{description}
    \item[\namedlabel{item:5-neuron-min-code}{(i)}]
    If $\scplex$ is \uline{not} isomorphic to the simplicial complex~\eqref{eq:complex-counterexample-code},
    then the code
$\mincode$ is top-dimensionally convex. 
    \item[\namedlabel{item:counterexample-cpx}{(ii)}]
    If $\scplex$ is the simplicial complex~\eqref{eq:complex-counterexample-code},
    then for
    a code $\code$ with neural complex $\scplexC=\scplex$,
    the following are equivalent: 
        \begin{description}
            \item[\namedlabel{item:a-convex}{(a)}]
            $\code$ is convex, 
            \item[\namedlabel{item:b-top-d-convex}{(b)}]
            $\code$ is top-dimensionally convex, and
            \item[\namedlabel{item:c-codewords}{(c)}]
            $\code$ contains at least one of the following codewords: $1$, $234$, and $345$.  
        \end{description}
\end{description}
\end{proposition}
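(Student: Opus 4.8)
The plan is to handle parts~\ref{item:5-neuron-min-code} and~\ref{item:counterexample-cpx} separately, with part~\ref{item:counterexample-cpx} being the substantive one. For part~\ref{item:5-neuron-min-code}, I would use that, up to isomorphism, there are only finitely many connected simplicial complexes on five vertices, and go through them. Whenever $\mincode$ is max-intersection-complete, Lemma~\ref{lem:maxint-nondeg} already gives that it is nondegenerately convex, hence top-dimensionally convex. The remaining complexes $\scplex$ — those, other than~\eqref{eq:complex-counterexample-code}, whose minimal code is \emph{not} max-intersection-complete, for instance the complex underlying Example~\ref{ex:convex-notmaxint} — form a short explicit list; for each, the classification of codes on at most five neurons~\cite{what-makes,goldrup2020classification} provides a convex realization of $\mincode$ (this is exactly where the hypothesis that $\scplex$ is not isomorphic to~\eqref{eq:complex-counterexample-code} is used), and I would check that each such realization is, or can be taken, top-dimensional, since it is assembled from full-dimensional convex pieces.

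For part~\ref{item:counterexample-cpx}, write $\scplex$ for the complex in~\eqref{eq:complex-counterexample-code} and note that its mandatory faces are $3,4,13,14,23,34,45$; hence a code $\code$ with $\scplexC=\scplex$ has no local obstruction precisely when $\code^{\star}\subseteq\code\subseteq\scplexCstar$ (recall $\mincodestar=\code^{\star}$). Since each of \ref{item:a-convex} and \ref{item:b-top-d-convex} forces the absence of local obstructions, I would restrict attention to codes $\code$ with $\code^{\star}\subseteq\code\subseteq\scplexCstar$ and prove the cycle \ref{item:b-top-d-convex}$\Rightarrow$\ref{item:a-convex}$\Rightarrow$\ref{item:c-codewords}$\Rightarrow$\ref{item:b-top-d-convex} there. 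The first implication is trivial. For \ref{item:a-convex}$\Rightarrow$\ref{item:c-codewords} I would argue contrapositively: if $\code$ contains none of $1$, $234$, $345$, then — by the computation carried out in the proof of Proposition~\ref{prop:original-counterexample-wheel-frame}, with $\omega=34$ — the triple $(23,45,1)$ is a wheel frame of $\code$, so $\code$ is non-convex by Theorem~\ref{prop:wire-frame}.

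The heart of the argument is \ref{item:c-codewords}$\Rightarrow$\ref{item:b-top-d-convex}. The complex $\scplex$ admits the automorphism $(2\,5)(3\,4)$, which fixes $\code^{\star}$ and the codeword $1$ and interchanges $234$ and $345$; so, relabeling if necessary, I may assume $\code$ contains $1$ or $234$. In the first case, $\code\supseteq\code^{\star}\cup\{1\}$, and since the max-intersection faces of $\scplex$ are exactly $\varnothing,1,3,4,13,14,23,34,45$ — all of which lie in $\code^{\star}$ except $1$ — the code $\code^{\star}\cup\{1\}$ is max-intersection-complete; Lemma~\ref{lem:maxint-nondeg} then makes it nondegenerately, hence top-dimensionally, convex, and Lemma~\ref{lem:nondeg-monotone}(i) upgrades this to $\code$ because $\code^{\star}\cup\{1\}\subseteq\code\subseteq\scplexCstar$. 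In the second case, $\code^{\star}\cup\{234\}\subseteq\code\subseteq\scplexCstar$, so by Lemma~\ref{lem:nondeg-monotone}(i) it is enough to exhibit one top-dimensional convex realization of $\code^{\star}\cup\{234\}$.

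The main obstacle is precisely this construction. Unlike $\code^{\star}\cup\{1\}$, the code $\code^{\star}\cup\{234\}$ is \emph{not} max-intersection-complete — it still omits the max-intersection face $1$ — so Lemma~\ref{lem:maxint-nondeg} does not apply and the realization has to be produced by hand. The approach I would take is to start from a realization for $\code^{\star}$ (whose only defect is the wheel frame $(23,45,1)$, which disappears once $234$ is a codeword) and open up a full-dimensional atom for $234$, then verify that every atom of the resulting open convex sets has nonempty interior; alternatively, one can read off such a realization from the five-neuron classification~\cite{goldrup2020classification} and check top-dimensionality directly. The analogous top-dimensionality verification for the finitely many non-max-intersection-complete minimal codes in part~\ref{item:5-neuron-min-code} is routine by comparison.
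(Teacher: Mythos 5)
Your overall architecture matches the paper's: in part (i), reduce to max-intersection-complete cases via Lemma~\ref{lem:maxint-nondeg} and appeal to the realizations in~\cite{goldrup2020classification}; in part (ii), run the cycle \ref{item:b-top-d-convex}$\Rightarrow$\ref{item:a-convex}$\Rightarrow$\ref{item:c-codewords}$\Rightarrow$\ref{item:b-top-d-convex}, using Proposition~\ref{prop:original-counterexample-wheel-frame} for the second implication and reducing the third (via the automorphism $(25)(34)$ and Lemma~\ref{lem:nondeg-monotone}) to exhibiting top-dimensional realizations of $\code^{\star}\cup\{1\}$ and $\code^{\star}\cup\{234\}$. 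You correctly identify that $\code^{\star}\cup\{1\}$ is max-intersection-complete while $\code^{\star}\cup\{234\}$ is not (it still omits the max-intersection face $1$), so the latter is the genuine crux.

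The gap is that neither of your two routes to a top-dimensional realization of $\code^{\star}\cup\{234\}$ works as stated. The first route --- ``start from a realization for $\code^{\star}$ $\dots$ and open up a full-dimensional atom for $234$'' --- founders immediately because $\code^{\star}$ is the canonical example of a code with \emph{no} open convex realization; you would have to begin from a closed-convex realization (cf.\ Remark~\ref{rmk:open}) and then control how perturbation changes the codeword set, none of which is addressed. The second route --- ``read off such a realization from the five-neuron classification~\cite{goldrup2020classification}'' --- is exactly what one cannot do: as Remark~\ref{rmk:min-cvx-codes} explains, the classification literature descending from~\cite{lienkaemper2017obstructions} asserted that $\code^{\star}\cup\{234,345\}$ was a minimal convex code with this complex, i.e., it treated $\code^{\star}\cup\{234\}$ as non-convex, so no realization of it appears there. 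This is precisely why the paper has to punt to forthcoming work of Maga\~{n}a and Phillipson~\cite{magana-phillipson}; the lemma you label ``routine by comparison'' is the one step that is genuinely nontrivial and which your proposal leaves open. (A minor secondary issue: your silent restriction to codes with $\code^{\star}\subseteq\code$ suppresses the case of codes with local obstructions; the paper's own statement has the same implicit restriction, but if you invoke it you should say so.)
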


\begin{proof}
\ref{item:5-neuron-min-code}.
If $\mincode$ is max-intersection-complete, the result follows from Lemma~\ref{lem:maxint-nondeg}.
By~\cite[Theorem 3.1]{goldrup2020classification} (in that article, an isomorphic copy of $\mincodestar$ is given the name ``C4''), 
all remaining minimal codes, besides $\mincodestar$, have convex realizations depicted in~\cite[Appendix B]{goldrup2020classification}. 
These realizations are easily seen to be top-dimensional.

\ref{item:counterexample-cpx}. 
The implication \ref{item:b-top-d-convex}$\Rightarrow$\ref{item:a-convex} is by definition.  Next, 
the contrapositive of \ref{item:a-convex}$\Rightarrow$\ref{item:c-codewords} follows directly from Proposition~\ref{prop:original-counterexample-wheel-frame}.  
% LAST IMPLICATION TO PROVE
Now we prove 
\ref{item:c-codewords}$\Rightarrow$\ref{item:b-top-d-convex}.
The code $\code^{\star} \cup \{1 \}$ is max-intersection-complete and thus, by Lemma~\ref{lem:maxint-nondeg}, is nondegenerately convex -- and so, by definition, top-dimensionally convex.  
The code $\code^{\star} \cup \{234 \}$ is also top-dimensionally convex:
% we constructed such a realization and 
such a realization will appear in the forthcoming work of 
Maga\~{n}a and Phillipson~\cite{magana-phillipson}. 
The code $\code^{\star} \cup \{345 \}$ is also top-dimensionally convex: after relabeling the neurons via the permutation $(25)(34)$, the resulting code is the code $\code^{\star} \cup \{234 \}$, which we already analyzed. 
Thus, by Lemma~\ref{lem:nondeg-monotone}(i), every code $\code$
with neural complex $\scplexC=\scplex$ that contains $1$, $234$, or $345$ is top-dimensionally convex. 
\end{proof}

\begin{remark} \label{rmk:min-cvx-codes}
Proposition~\ref{prop:five-neur-top-dim}\ref{item:counterexample-cpx} 
implies that $\code^{\star} \cup \{1 \}$, 
$\code^{\star} \cup \{234 \}$, and 
$\code^{\star} \cup \{345 \}$ 
are the minimal (with respect to inclusion) convex codes with neural complex equal to $\scplexCstar$.  
This corrects an error in~\cite[Remark 3.5]{lienkaemper2017obstructions}, where it was asserted that $\code^{\star} \cup \{234, 345 \}$ is such a minimal convex code.
%To quote from~\cite{lienkaemper2017obstructions}, 
%``The proof of Theorem 3.1 shows that there is a âsecond-levelâ obstruction: for codes having the same simplicial complex as the counterexample code (i), if the codeword 1 is not in the code, then both 234 and 345 must be in the code (for the code to be convex).''
\end{remark}

We now show the equivalence of convexity and top-dimensional convexity for codes on up to 5 neurons.  

\begin{proposition}\label{prop:fiveneur-nondeg} 
Let $\code$ be a code on $5$ or fewer neurons. 
Then $\code$ is convex if and only if $\code$ is top-dimensionally convex.  
\end{proposition}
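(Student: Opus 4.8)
The plan is to reduce the statement to a finite check, one connected simplicial complex at a time, and invoke the results just assembled. The ``if'' direction is trivial: a top-dimensionally convex code is by definition convex. So the content is the ``only if'' direction, and I would argue as follows. First, a code $\code$ is convex (respectively top-dimensionally convex) if and only if each of its connected components is convex (respectively top-dimensionally convex), by taking disjoint unions of realizations placed in distant regions of $\bbR^d$; and moreover adding or removing trivial neurons changes nothing (Proposition~\ref{prop:reduced}), so we may assume $\code$ has no trivial neurons. Hence it suffices to treat a code $\code$ on at most $5$ neurons whose neural complex $\scplexC$ is connected. For such a code, Proposition~\ref{prop:summary-prior-results} tells us $\code$ convex implies $\code$ has no local obstructions, which means $\mincode \subseteq \code \subseteq \scplexC$ where $\scplex := \scplexC$.

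Now split into cases on $\scplex$. If $\scplex$ is \emph{not} isomorphic to the complex~\eqref{eq:complex-counterexample-code}, then Proposition~\ref{prop:five-neur-top-dim}\ref{item:5-neuron-min-code} gives that $\mincode$ is top-dimensionally convex; since $\mincode \subseteq \code \subseteq \scplex = \scplexC$, the monotonicity statement Lemma~\ref{lem:nondeg-monotone}(i) upgrades this to: $\code$ is top-dimensionally convex, as desired. If instead $\scplex$ is isomorphic to~\eqref{eq:complex-counterexample-code}, then we are exactly in the situation of Proposition~\ref{prop:five-neur-top-dim}\ref{item:counterexample-cpx}, whose equivalence \ref{item:a-convex}$\Leftrightarrow$\ref{item:b-top-d-convex} says precisely that $\code$ convex implies $\code$ top-dimensionally convex. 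That closes every case.

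The one point requiring a little care, and the closest thing to an obstacle, is the initial reduction to connected complexes: I should check that top-dimensionality is preserved when forming the disjoint union of realizations of the connected components. This is because a realization of $\code$ with connected $\scplexC$ can be embedded in a small ball, these balls placed far apart in a common $\bbR^d$, and the atoms of the combined realization are then (away from the finitely many empty ones) either atoms of one component or the unbounded complementary region, each of which is top-dimensional if the pieces were. A symmetric remark handles trivial neurons, which only enlarge every nonempty atom by an open factor. After that, the proof is pure bookkeeping: every nontrivial input — the classification of minimal codes on $5$ neurons, the special treatment of the complex~\eqref{eq:complex-counterexample-code}, and the monotonicity of top-dimensional convexity — has already been established above, so no genuinely new geometric construction is needed here.
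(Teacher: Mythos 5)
Your proof follows essentially the same route as the paper's (split on whether $\scplexC$ is isomorphic to the complex~\eqref{eq:complex-counterexample-code}, then apply Proposition~\ref{prop:five-neur-top-dim} and the monotonicity Lemma~\ref{lem:nondeg-monotone}), but there is a gap at the small end. After your reduction to a code whose complex is connected and has no trivial neurons, the support of $\scplexC$ can be any set of between one and five vertices. Proposition~\ref{prop:five-neur-top-dim} is stated only for connected simplicial complexes on \emph{exactly} five vertices, so you cannot invoke it when you land on four or fewer. You also cannot pad: adding a trivial neuron re-disconnects the complex, and there is no canonical way to graft in a nontrivial neuron without changing the code and its convexity status. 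So as written, the ``only if'' direction is not established when the code lives on at most four (non-trivial) neurons.

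The paper closes this case separately: by Proposition~\ref{prop:summary-prior-results}\eqref{itm:third}, for $n\le 4$ a convex code is automatically max-intersection-complete, and Lemma~\ref{lem:maxint-nondeg} then produces a nondegenerate --- hence top-dimensional --- convex realization. Inserting that step before your case split repairs the argument. As a smaller point, your explicit reduction across connected components (together with the observation that a far-apart disjoint union of top-dimensional realizations is top-dimensional) is something the paper's proof passes over in silence, and it is in fact needed there as well before Proposition~\ref{prop:five-neur-top-dim} can be applied; so that part of your write-up is a genuine, if minor, improvement in clarity.
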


\begin{proof}
The direction $\Leftarrow$ holds by definition.  For $\Rightarrow$, let $\code$ be a convex code on $n$ neurons, where $n \leq 5$.  If $n \leq 4$, then 
Proposition~\ref{prop:summary-prior-results}(\ref{itm:third}) 
and Lemma~\ref{lem:maxint-nondeg} together imply that
$\code$ is nondegenerately convex and hence, by definition, top-dimensionally convex.

Now assume $n=5$.  If the neural complex $\scplexC$ is \uline{not} isomorphic to 
$\scplexCstar$, the simplicial complex shown in~\eqref{eq:complex-counterexample-code}, 
then the result follows from 
Lemma~\ref{lem:nondeg-monotone}(i) and 
Proposition~\ref{prop:five-neur-top-dim}\ref{item:5-neuron-min-code}.  
The only remaining case, therefore, is when $\scplexC$ is isomorphic to
$\scplexCstar$.  
By relabeling neurons, if necessary, we may assume that $\scplexC = \scplexCstar$. 
Now the result follows from Proposition~\ref{prop:five-neur-top-dim}\ref{item:counterexample-cpx}.
%As $\code$ is convex, then by Proposition~\ref{prop:five-neur-top-dim}\eqref{item:counterexample-cpx}, 
%the code $\code$ contains
% at least one of the codewords $1$, $234$, and $245$. Thus, by  Proposition~\ref{prop:five-neur-top-dim}\eqref{item:counterexample-cpx}, 
% $\code$ is top-dimensionally convex. 
% This completes the proof.
\end{proof}

We expect that Proposition~\ref{prop:fiveneur-nondeg} does not extend to codes on more neurons. Related questions were posed recently by Chan {\em et al.}~\cite[Question~2.17]{chan-nondegenerate} and 
Jeffs~\cite[Question~10.1]{jeffs2019embedding}.

We now prove two results, stated earlier, pertaining to codes on up to 5 or 6 neurons.

\begin{proof}[Proof of Theorem~\ref{thm:5-neurons-complete}]
$(\Rightarrow)$ This implication follows from
Proposition~\ref{prop:summary-prior-results}\eqref{itm:second} and Theorem~\ref{prop:wire-frame}. % Theorem~\ref{thm:wheel}.

$(\Leftarrow)$  Let $n$ denote the number of neurons.  For $n \leq 4$, this result follows from Proposition~\ref{prop:summary-prior-results}\eqref{itm:third}.  
Now assume $n=5$.  We consider two cases.  
If $\scplexC$ is isomorphic to $\scplexCstar$, the simplicial complex shown in~\eqref{eq:complex-counterexample-code}, then the theorem follows from 
Propositions~\ref{prop:original-counterexample-wheel-frame} and~\ref{prop:five-neur-top-dim}\ref{item:counterexample-cpx}. 
If $\scplexC$ is {\em not} isomorphic to $\scplexCstar$, 
then the theorem follows 
from 
Lemma~\ref{lem:nondeg-monotone}(i) and Proposition~\ref{prop:five-neur-top-dim}\ref{item:5-neuron-min-code}. 
\end{proof}

\begin{proof}[Proof of Theorem~\ref{thm:decomposable}]
The implication $\Rightarrow$ follows from Lemma~\ref{lem:restrict}, and $\Leftarrow$ is immediate from Propositions~\ref{prop:decomposable-convex} and~\ref{prop:fiveneur-nondeg}. 
%
%$\Rightarrow$ Suppose $\code$ is convex. Let $\fullreal$ be a convex realization of $\code$. Then $\{U_i\ |\ i\in \varphi \}$ is a convex realization of $\code |_{\varphi}$ and $\{U_i\ |\ i\not\in \varphi \}$ is a convex realization of $\code |_{[n]\smallsetminus \varphi}$.  {\color{violet} This result is used a lot - restriction preserves convexity - in this section.  Restate as a lemma?}
\end{proof}

 \section{Codes on 6 neurons} \label{sec:6-neuron}
Having shown that wheels and local obstructions completely characterize non-convexity for codes on up to 5 neurons,
we now turn our attention to codes on 6 neurons. 
Due to the large number of such codes, we restrict our analysis to codes with up to 7 maximal codewords.  Additionally, 
like the classification process of~\cite{goldrup2020classification}, 
we focus on minimal codes, $\mincode$, as in~\eqref{eq:min-code}.  
Recall that when some $\mincode$ is convex, then all codes with neural complex equal to $\scplex$ (and no local obstructions) are convex (Proposition~\ref{prop:summary-prior-results}).

Our results, summarized in Table~\ref{tab:fewfacets}, are quite promising.  Approximately 300 codes are found to be non-convex due to a wheel.  
Also, the number of codes with wheels % appears to 
increases with the number of maximal codewords.  Wheels are therefore surprisingly efficient in detecting non-convexity.  
\iffalse
\begin{table}[ht]
    \centering
    \begin{tabular}{  l  c  c  c  c  } 
    \hline
    \multicolumn{1}{c}{~} &  \multicolumn{4}{c}{\textit{Number of maximal codewords}} \\
     & Four & Five & Six & Seven \\
    \hline
    Reducible or decomposable & 204 & 482 & 528 & 341\\
    Max-intersection-complete (thus, convex) & 4 & 79 & 399 & 909\\
    Wheel (thus, non-convex) & 2 & 36 & 118 & 159\\
    Unknown & 0 & 94 & 533 & 1169 \\
\hline
    Total & 210 & 691 & 1578 & 2578\\
    \hline
    \end{tabular}
 \caption{Classification of 6-neuron codes with 4 to 7 maximal codewords.  More precisely, the table classifies minimal codes $\mincode$ of all (up to isomorphism) connected simplicial complexes on $6$ vertices with 4 to 7 facets.  This table was obtained using Procedure~\ref{procedure}.  Some codes 
 from the ``Unknown'' row
 %with unknown convexity status 
 are listed in Appendix~\ref{sec:appendix-max}.
 }
% \caption*{This table gives an analysis of those minimal codes on $6$ neurons with at least $3$ and no more than $7$ maximal codewords. The columns are the codes sorted by the dimension of $\scplexC$. \textbf{Total} refers to the total number of minimal codes for which $\scplexC$ is pure of that dimension. \textbf{Reducible or decomposable} is the number of codes that can be either reduced or decomposed into a code on fewer than $6$ neurons. Of the codes that could not be reduced or decomposed, \textbf{Max-$\cap$} is the number of codes for which every max-intersection face is mandatory (implying convexity), while \textbf{Wheel} refers to those codes that contain a wheel (implying non-convexity). Lastly, \textbf{Unknown} refers to those codes that do not belong in any of the first three categories, and so have unknown convex classification.}
     \label{tab:fewfacets}
\end{table}
\fi

\begin{table}[ht]
    \centering
    \begin{tabular}{  l  c  c  c  c  } 
    \hline
    \multicolumn{1}{c}{~} &  \multicolumn{4}{c}{\textit{Number of maximal codewords}} \\
     & Four & Five & Six & Seven \\
    \hline
    Reducible or decomposable & 203 & 480 & 526 & 341\\
    Max-intersection-complete (thus, convex) & 4 & 79 & 399 & 909\\
    Wheel (thus, non-convex) \\ 
                        %& 3 & 36 & 118 & 159\\
    \quad {\em Wheel frame only} & 0 & 1 & 11 & 36\\
    \quad {\em Sprocket only} & 2 & 6 & 14 & 14\\
    \quad {\em Wheel frame and sprocket} & 1 & 29 & 92 & 108\\
    \quad {\em Wire wheel only} & 0 & 0 & 1 & 1\\
    Unknown & 0 & 96 & 535 & 1169 \\
\hline
    Total & 210 & 691 & 1578 & 2578\\
    \hline
    \end{tabular}
 \caption{Classification of 6-neuron codes with 4 to 7 maximal codewords.  More precisely, the table classifies minimal codes $\mincode$ of all (up to isomorphism) connected simplicial complexes on $6$ vertices with 4 to 7 facets.  This table was obtained using Procedure~\ref{procedure}.  Some codes 
 from the ``Unknown'' row
 %with unknown convexity status 
 are listed in Appendix~\ref{sec:appendix-max}.  
 No code containing a wire wheel also contained another type of wheel.% (wheel frame or sprocket).
% No code contained both a wire wheel and a sprocket, nor both a wire wheel and a wheel frame,
}
     \label{tab:fewfacets}
\end{table}

%\begin{table}[ht]
%    \centering
%    \begin{tabular}{  l  c  c  c  c  } 
%    \hline
%    \multicolumn{1}{c}{~} &  \multicolumn{4}{c}{\textit{Number of maximal codewords}} \\
%     & Four & Five & Six & Seven \\
%    \hline
%    Wheel Frame Only & 0 & 1 & 11 & 36\\
%    Sprocket Only & 2 & 6 & 14 & 14\\
%    Wire Wheel Only & 0 & 0 & 1 & 1\\
%    Wheel Frame and Sprocket & 1 & 29 & 92 & 108\\
%\hline
%    Total & 3 & 36 & 118 & 159\\
%    \hline
%    \end{tabular}
% \caption{Classification of the wheels found in the 4 to 7 maximal codewords from Table \ref{tab:fewfacets}. Note that no code contained both a wire wheel and a sprocket or both a wire wheel and a wheel frame, thus the corresponding rows for such combinations do not appear in this table. This table was obtained using Procedure~\ref{procedure}. 
%}
%     \label{tab:fewfacets-wheelbreakdown}
%\end{table}

Our workflow to produce the data in Table~\ref{tab:fewfacets} was as follows.  We first used {\tt nauty}~\cite{nauty} to enumerate all connected simplicial complexes on $6$ vertices (up to isomorphism). Next, we used {\tt SageMath}~\cite{sagemath} to compute the minimal code of each simplicial complex (see~\cite[Algorithm 4.1]{lienkaemper2017obstructions}).  
Finally, we classified 
 6-neuron minimal codes with exactly four, five,  six, or seven maximal codewords,  
  by applying Procedure~\ref{procedure}.  

 \begin{procedure}[Classifying codes on 6 neurons] \label{procedure}
    ~ \\
    {\sc Input:} A code $\code$ on (up to) 6 neurons. \\
    {\sc Output:} ``Reducible or decomposable'', ``Max-intersection-complete'', ``Wheel'', or ``Unknown''.\\
    {\sc Steps:} 
 \begin{enumerate}
     \item Determine whether $\code$ is reducible\footnote{ 
     In our enumerations, when determining whether a code is reducible, we need only check for redundant neurons.  This is because the simplicial complexes we consider are connected, and so the resulting codes lack trivial neurons.} or decomposable.  (We saw in the previous section
     -- specifically, Proposition~\ref{prop:reduced} and Theorem~\ref{thm:decomposable} --
     that, in terms of convexity, such codes are equivalent to codes on fewer neurons.)  If not, proceed to the next step.
     \item Determine whether $\code$ is max-intersection-complete.  (Such codes are convex, by Proposition~\ref{prop:summary-prior-results}.) If not, proceed to the next step.
     \item Determine whether $\code$ has a 
     sprocket, wire wheel, or wheel frame. 
%     combinatorial wheel -- that is, it satisfies the criterion from at least one of Propositions~\ref{prop:sprocket}, \ref{prop:wire-wheel}, and \ref{prop:wire-frame}.  
    If so, the convexity status of $\code$ is ``Wheel''. 
     If not, the status is ``Unknown''.
 \end{enumerate}
\end{procedure}

A package containing the {\tt SageMath} scripts we used to apply Procedure~\ref{procedure}, as well as the resulting data on codes (for instance, which have wheels and what those wheels are), is available on {\tt GitHub}~\cite{alex-github}. 
%at the following URL: \url{https://github.com/aruysdeperez/WheelsCode.git}.  
The scripts we used to perform step (3) of Procedure~\ref{procedure} incorporate several results from Section~\ref{sec:search-2} to make checking for a wheel more efficient.  Also, a selection of the codes whose convexity status is unresolved (those in the ``Unknown'' row in Table~\ref{tab:fewfacets}) are listed in Appendix~\ref{sec:appendix-max}.

% CONDENSED THIS REMARK (ABOVE).
%\begin{remark}
%The scripts on GitHub use several of the results from Section \ref{sec:search-2}. For example, when searching for sprockets, the script filters out all quadruples $\WheelComps$ for which $\tau\in \calC$, due to Proposition \ref{prop:missingrim}. However, due to them being discovered after the writing of the code, the following results are not used in the scripts: Corollary \ref{cor:no-spoke-containment-sprocket-wire-wheel} (when applied to wire wheels), Proposition \ref{prop:no-spoke-1-3-containment-wheel-frame}, Proposition \ref{prop:wire-wheel-max-intersection} (in particular that $\tau$ is a max-intersection face), and Proposition \ref{prop:wheel-frame-max-intersection}. Note that the consequence of not using these results makes the wheel checking procedure less efficient but not any less accurate.
%\end{remark}

\begin{remark}[Codes with up to three maximal codewords] \label{rmk:2-or-3-facets}
For codes with at most three maximal codewords -- and any number of neurons -- convexity has been fully characterized: such codes are convex if and only if they have no local obstructions \cite{johnston2020neural}. We therefore do not include codes with three or fewer maximal codewords in Table~\ref{tab:fewfacets}.
\end{remark}

In light of Remark~\ref{rmk:2-or-3-facets}, we draw another conclusion from Table~\ref{tab:fewfacets}: 
{\em A 6-neuron minimal code~$\code$ with up to four maximal codewords is convex if and only if $\code$ has no local obstructions and no wheel obstructions (more precisely, no sprocket, wire wheel, or wheel frame).}
%for a simplicial complex $\scplex$ on 6 vertices with  up to 4 facets, the minimal code $\mincode$ is convex if and only if it has no local obstructions and no wheel.
We do not know whether this observation generalizes, either to non-minimal codes or to codes on 7 or more neurons.
% Note: This is essentiall the conjecture Amzi sent by email.

Going forward, there are still many codes whose convexity status is unknown (as seen in Table~\ref{tab:fewfacets}).  Additionally, we would like to extend the classification to codes with more than seven maximal codewords, but this task is currently computationally challenging.  Nevertheless, we hope that results like those in Section~\ref{sec:search-2} will make this approach more tractable.

\section{Pure neural codes} \label{sec:pure}
We saw in the previous section that classifying convex codes on six (or more) neurons is challenging, due to the sheer number of such codes.  A more reasonable task, therefore, is to analyze codes with special properties.  One such property is having a neural complex that is pure, as follows.

%We now turn to the family of pure neural codes. Unlike the reducible or decomposable codes, the pure codes are not instances of a simple code disguised with extra complexity, but rather an interesting variety of codes with properties not applicable to all codes.

\begin{definition} \label{def:pure}
A neural code $\code$ is \textit{pure} if its neural complex $\scplexC$ is pure, 
that is, every facet of $\scplexC$ has the same dimension.
\end{definition}
 
Next, we show that for pure codes of low or high dimension, 
being convex is equivalent to being max-intersection-complete, and thus is easy to check.  
For a discussion of the algorithmic aspects of checking whether a code is max-intersection-complete, see~\cite[\S 6]{de2020neural}.
 
\begin{theorem} \label{thm:pure}
Let $\code$ be a code on $n$ neurons.  
If $\code$ is pure of dimension $0$, $1$, $n-2$, or $n-1$, then 
the following are equivalent:
\begin{description}
    \item[\namedlabel{itm:purecond-i}{(i)}] $\code$ is convex,
    \item[\namedlabel{itm:purecond-ii}{(ii)}] $\code$ has no local obstructions, and
    \item[\namedlabel{itm:purecond-iii}{(iii)}] $\code$ is max-intersection-complete.
\end{description}
\end{theorem}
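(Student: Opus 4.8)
The plan is to lean on Proposition~\ref{prop:summary-prior-results}, which already gives \ref{itm:purecond-iii}$\Rightarrow$\ref{itm:purecond-i}$\Rightarrow$\ref{itm:purecond-ii} for an arbitrary code and identifies \ref{itm:purecond-ii} with the condition $\ManC \subseteq \code$. Since $\ManC \subseteq \MaxIntC$ always (Proposition~\ref{prop:summary-prior-results}\eqref{itm:first}) and $\varnothing \in \code$ (Assumption~\ref{assum:empty-codeword}), it suffices to show, in each of the four dimension cases, that every nonempty max-intersection face of $\code$ is mandatory: then \ref{itm:purecond-ii} forces $\MaxIntC \subseteq \ManC \cup \{\varnothing\} \subseteq \code$, which is \ref{itm:purecond-iii}, closing the cycle of implications. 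Thus the whole theorem reduces to a combinatorial-topological statement about a pure complex $\scplexC$ of dimension $0$, $1$, $n-2$, or $n-1$: the intersection of any two or more of its facets is either empty or has non-contractible link.

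Next I would dispatch the three easy cases. If $\dim\scplexC = n-1$, then $[n]$ is the unique facet, so there are no max-intersection faces and nothing to check. If $\dim\scplexC = 0$, the facets are singletons, hence pairwise disjoint, so the intersection of two or more of them is $\varnothing$ and $\MaxIntC \subseteq \{\varnothing\}$. If $\dim\scplexC = 1$, every facet is an edge, so the intersection of two or more facets is either $\varnothing$ or a single vertex $\{v\}$ lying on at least two edges; for such $\{v\}$ the link $\Lk{\{v\}}{\scplexC}$ is the discrete complex on the (at least two) neighbours of $v$, which is disconnected and hence non-contractible, so $\{v\}$ is mandatory. This gives $\MaxIntC \subseteq \ManC \cup \{\varnothing\}$.

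The substantive case is $\dim\scplexC = n-2$. I would write the facets as $F_i = [n]\smallsetminus\{i\}$ for $i$ ranging over a nonempty set $S \subseteq [n]$, so that $\scplexC = \{A \subseteq [n] : S \not\subseteq A\}$, and observe that every max-intersection face then equals $\sigma_T := \bigcap_{i\in T} F_i = [n]\smallsetminus T$ for some $T \subseteq S$ with $|T| \geq 2$. A short set-theoretic verification shows that a set $\tau$ disjoint from $\sigma_T$ satisfies $\tau \cup \sigma_T \in \scplexC$ exactly when $\tau \subsetneq T$, so $\Lk{\sigma_T}{\scplexC}$ is the boundary complex of the simplex on vertex set $T$, which is homeomorphic to $S^{|T|-2}$ and hence non-contractible for $|T| \geq 2$ (it carries nontrivial reduced homology in degree $|T|-2$). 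Therefore each $\sigma_T$ is mandatory, so $\MaxIntC \subseteq \ManC$, finishing this case and the theorem.

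I expect the link computation in the $(n-2)$-dimensional case to be the only step with any real content — one must check carefully the equivalence ``$\tau$ disjoint from $\sigma_T$ and $\tau\cup\sigma_T\in\scplexC$'' $\iff$ ``$\tau\subsetneq T$'' and then recognize the resulting complex as $\partial\Delta^{|T|-1}\simeq S^{|T|-2}$; the other three cases are essentially bookkeeping. It is also worth noting at the outset that the stated dimension hypotheses rule out the degenerate complex $\{\varnothing\}$, and that a complex with a single facet is already covered (it has no max-intersection faces), so no separate treatment of edge cases is needed.
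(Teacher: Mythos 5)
Your proof is correct and follows essentially the same route as the paper: reduce (ii)$\Rightarrow$(iii) to showing that nonempty max-intersection faces are mandatory, treat dimensions $0$ and $n-1$ as degenerate, and identify the link of a max-intersection face in the pure-$(n-2)$ case as the boundary of a simplex on $|T|\ge 2$ vertices (the paper calls this the ``hollow simplex,'' which is the same object as your $\partial\Delta^{|T|-1}\simeq S^{|T|-2}$). The only meaningful difference is that you supply direct, self-contained arguments for the dimension-$1$ and dimension-$(n-1)$ cases, whereas the paper dispatches these by citing \cite[Theorem~1.3]{sparse} and \cite[Lemma~2.5]{what-makes}, respectively. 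Your dimension-$1$ argument (a non-leaf vertex has disconnected, hence non-contractible, link) and your dimension-$(n-1)$ argument ($[n]$ is the unique facet, so $\MaxIntC=\varnothing$) are both valid and arguably make the proof more transparent, at the mild cost of re-deriving known facts. Both approaches buy the same theorem; yours trades literature dependence for a page of elementary verification.
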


\begin{proof}
The implications 
\ref{itm:purecond-iii} $\Rightarrow$ \ref{itm:purecond-i} $\Rightarrow$ \ref{itm:purecond-ii} 
are contained in Proposition~\ref{prop:summary-prior-results}.
% is Proposition~\ref{prop:max-int-complete-implies-cvx}\ref{partii}.

The implication 
\ref{itm:purecond-ii} $\Rightarrow$ \ref{itm:purecond-iii}
holds for dimension~0 (trivially, as by assumption $\varnothing \in \code$), dimension~1 (this follows easily from \cite[Theorem 1.3]{sparse}), and dimension~$n-1$ (by \cite[Lemma 2.5]{what-makes}). 
Now assume that $\code$ is pure of dimension $n-2$ and has no local obstructions.

Let $F_1,F_2,\dots,F_m$ be distinct facets of $\scplexC$ (with $m \geq 2$); 
we must show that 
$\scfaceS:= \bigcap_{j=1}^m F_j$ is a codeword of $\code$.  
As $\code$ has dimension $n-2$, 
there exist distinct $i_1, i_2,\dots, i_m$
such that $F_j = [n]\smallsetminus \{i_j\}$ (for every $j$). 
Thus, $\scfaceS ~=~ [n]\smallsetminus \{i_1,i_2,\dots,i_m\}.$
%= \{i_{m+1},i_{m+2},\dots,i_n\}.$$

We now claim that every facet of $\scplexC$ that contains $\scfaceS$ is one of the $F_j$. 
To this end, suppose $F$ is such a facet.
As $\scplexC$ is pure, $F = [n]\smallsetminus \{i\}$ for some $i$. Then $i\not\in \scfaceS$, and so $i\in \{i_1,i_2,\dots, i_m\}$. Therefore, $i = i_j$ (for some $j = 1,\dots,m$) and thus $F = F_j$, proving the claim.

The claim implies that the facets of $\LkCfaceS$ are precisely the
sets
$$F_j\smallsetminus \scfaceS = \{i_1,i_2,\dots,i_{j-1},\hat{i}_j,i_{j+1},\dots,i_m\},\ {\rm for} ~j = 1,\dots,m.$$ 
Thus, $\LkCfaceS$ is the hollow simplex on $m$ vertices (that is, it contains all possible faces on the $m$ vertices except the top face) and so is not contractible (recall that $m \geq 2$).  Thus, as $\code$ has no local obstructions by assumption, we have $\scfaceS \in \code$, and so \ref{itm:purecond-ii} $\Rightarrow$ \ref{itm:purecond-iii} holds.
\end{proof}

The analogous result for pure codes of dimension 2 or $n-3$ (or any dimension in between) does not hold. 
Indeed, we have already seen examples demonstrating this fact. 
For instance, the code 
from Example~\ref{ex:convex-notmaxint} is a convex code on $n=5$ neurons that fails to be max-intersection-complete, but is pure and of dimension $n-3 = 2$.  Another example is the code in Proposition~\ref{prop:3-sparse-counterex}, which is pure of dimension 2, has no local obstructions, and is non-convex.

We end this section by analyzing pure codes on 6 neurons.
For such codes of dimension 0, 1, 4, or 5, 
convexity is well understood (by Theorem~\ref{thm:pure}).  
We therefore focus on pure 6-neuron codes of dimension 2 or 3. 
We applied Procedure~\ref{procedure} to these codes, and our results are shown in Table~\ref{tab:pure-codes}.  
Around half of these codes 
reduce to smaller codes or are classified as convex or non-convex.  Only 6 are found to have wheels, and many codes have unknown convexity status.  
A partial list of these ``unknown'' codes appears in Appendix~\ref{sec:appendix-pure}, 
and the full list
is available at {\tt GitHub}~\cite{alex-github}.

\iffalse
%---------------------------
\begin{table}[ht]
    \centering
    \begin{tabular}{  l  c c   } 
    \hline
    $ $ &  $\dim =2$ & $\dim =3$ \\
    \hline
    Reducible or decomposable & 153 & 40 \\
    Max-intersection-complete (convex) &944 & 32  \\
    Wheel (non-convex) & 0 & 6 \\ 
    Unknown &  1004 & 72   \\ 
    \hline
    Total & 2101 & 150\\ 
    \hline
%    \begin{tabular}{  l  c  c  c  c  c  } 
%    \hline
%     & $\dim=1$ & $\dim=2$ & $\dim=3$ & $\dim=4$ & $\dim=5$ \\
%    \hline
%    Reducible or decomposable & 51 & 153 & 40 & 4 & 1\\
%    Max-intersection-complete (thus, convex) & 61 & 944 & 32 & 1 & 0\\
%    Wheel (thus, non-convex) & 0 & 0 & 6 & 0 & 0\\
%    Unknown& 0 & 1004 & 72 & 0 & 0 \\ \hline
%    Total & 112 & 2101 & 150 & 5 & 1\\
%    \hline
%    \end{tabular}
    \end{tabular}
 \caption{Classification of pure 6-neuron codes
of dimension 2 or 3.  More precisely, the table classifies minimal codes $\mincode$ of all (up to isomorphism) connected simplicial complexes on $6$ vertices that are pure of dimension 2 or 3. Every code found to contain a wheel contained both a wheel frame and a sprocket, but not a wire wheel.}
 %New categories will be disjoint (Alex will find the numbers that go in the table): 
 %(1) reducible or decomposable, 
 %(2) max-intersection-complete (thus, convex), 
 %(3) wheel (thus, non-convex), and 
 %(4) unknown }}
    \label{tab:pure-codes}
\end{table}
\fi
%---------------------------
\begin{table}[ht]
    \centering
    \begin{tabular}{  l  c c   } 
    \hline
    $ $ &  $\dim =2$ & $\dim =3$ \\
    \hline
    Reducible or decomposable & 153 & 36 \\
    Max-intersection-complete (convex) &944 & 32  \\
    Wheel (non-convex) & 0 & 6 \\ 
    Unknown &  1004 & 76   \\ 
    \hline
    Total & 2101 & 150\\ 
    \hline
%    \begin{tabular}{  l  c  c  c  c  c  } 
%    \hline
%     & $\dim=1$ & $\dim=2$ & $\dim=3$ & $\dim=4$ & $\dim=5$ \\
%    \hline
%    Reducible or decomposable & 51 & 153 & 40 & 4 & 1\\
%    Max-intersection-complete (thus, convex) & 61 & 944 & 32 & 1 & 0\\
%    Wheel (thus, non-convex) & 0 & 0 & 6 & 0 & 0\\
%    Unknown& 0 & 1004 & 72 & 0 & 0 \\ \hline
%    Total & 112 & 2101 & 150 & 5 & 1\\
%    \hline
%    \end{tabular}
    \end{tabular}
 \caption{Classification of pure 6-neuron codes
of dimension 2 or 3.  More precisely, the table classifies minimal codes $\mincode$ of all (up to isomorphism) connected simplicial complexes on $6$ vertices that are pure of dimension 2 or 3. 
All $6$ codes with a wheel contain both a wheel frame and a sprocket, but not a wire wheel.}
 %New categories will be disjoint (Alex will find the numbers that go in the table): 
 %(1) reducible or decomposable, 
 %(2) max-intersection-complete (thus, convex), 
 %(3) wheel (thus, non-convex), and 
 %(4) unknown }}
    \label{tab:pure-codes}
\end{table}

\section{Discussion} \label{sec:discussion}
We have now presented a new tool for inferring non-convexity of neural codes, namely, the wheel. 
%-- and combinatorial criteria for detecting wheels.  
Notably, wheels and local obstructions together completely characterize convexity for codes on up to $5$ neurons -- and yield a partial classification of codes on up to $6$ neurons. 
%We also identified a class of codes, namely, pure codes, for which convexity is equivalent to being max-intersection-complete and therefore is easy to check. 

This work sheds more light on the question of which codes are convex,
and also raises interesting new questions. Wheels are
different from prior non-convexity criteria in that they are inherently a
property of the realization studied, and not the code. Currently, we
have no way to determine that a code has no wheels, short of proving
the code is convex (as opposed to the situation with local obstructions).
For these
reasons, the foremost open question in this direction is to find a
complete criterion for the existence of wheels in terms of the code, and
not of its realizations. Needless to say, effective criteria, that can
be combinatorially stated and verified, are the
most desirable.

%As a start, the theory for detecting wheels is not complete.
%Unlike with the local obstruction, we can not guarantee that a code has no wheel, short of proving the code is convex. 
%Therefore, learning a necessary and sufficient condition for a wheel 
% -- or proving that no such criterion exists -- 
%is the foremost remaining problem.

%However, we should be mindful of other issues that are tangled up in the question of  finding a necessary and sufficient condition for wheels. In the case of the local obstruction, the signature for the necessary and sufficient condition may be found in the set of max-intersection faces not in $\code$,  which makes sense in light of \cite[Theorem 1.2]{cruz2019open}. As wheels are also a type of non-convexity, it should follow that the presence of a wheel must also prevent max-intersection-completeness. Therefore, we expect that the signature for a wheel in a code will always present itself in the max-intersection faces. Proving Conjecture \ref{conj:bubbleup} seems the correct starting point for this work.

Another question we have posed is whether, in a wheel, the rim is always a non-codeword that bubbles up to a max-intersection face (Conjecture~\ref{conj:bubbleup}).  A related question arising from our examples is whether every code with a wheel ``bubbles down'' to a wheel in which the union of the spokes, $\spokeOne \cup \spokeTwo \cup \spokeThree$, is disjoint from the rim $\rim$. 
    %I think it's true when the spokes $\spokei$ have size 1, but maybe it's false in general.
An additional future direction is to generalize wire wheels to accommodate links beyond path graphs; one idea here is to use the concept of order-forcing~\cite{jeffs2020order} to guarantee the existence of the required line segment through the three spokes. Answers to these questions would make searching for wheels more efficient and potentially more fruitful.

Other questions concerning wheels arise in connection with other works
in the literature.  
For instance, we know that detecting convexity is an NP-hard problem~\cite{oriented-matriods-and-codes}. Is the same true for detecting wheels? 
We would also like to investigate the relationship between wheels,
decomposable codes, and the partial order on codes introduced by Jeffs in which the convex codes form a down-set~\cite{jeffs2020morphisms}. % {\color{red}GUESS: codes with NO wheels form a down-set.}

Returning to our partial classification of codes on $6$ neurons, there
are hundreds of codes for which convexity is not currently known.  To
handle so many codes, we need automatic ways of proving a code is
convex -- and also more ways to preclude convexity.  One approach,
mentioned earlier, is to generalize wheels to accommodate more spokes,
inspired by Jeffs's approach on sunflowers~\cite{jeffs2019sunflowers} (Remark~\ref{rmk:sunflower}).  
%
%Another twist concerns adding more spokes in the wheel. Throughout this dissertation, we have only considered wheels with three spokes, with the condition being that a line in the rim $\scfaceT$ must pass through each spoke. However, there are more complex versions of the wheel that act in higher dimensions. From \cite[Theorem 1.1]{jeffs2019sunflowers}, the phenomenon holds not just for when there are three spokes and a line, but $d+1$ convex sets and a $d-1$-dimensional hyperplane. From this result, \cite{jeffs2019sunflowers} constructs an infinite family of non-convex codes with neither local obstructions nor wheels. Generalizing wheels to include these larger phenomena seems like the natural next step. 
In fact, it might be that such a generalization is needed to resolve our conjectures.  
Specifically, when a wheel bubbles up, perhaps what is obtained is a generalized wheel having more than three spokes.

Finally, we know that wheels preclude a code from being open-convex,
but not necessarily closed-convex. 
A counterpart to wheels for ruling out closed-convexity (independently of local obstructions), via ``rigid structures'', was given recently by Chan {\em et al.}~\cite{chan-nondegenerate}.  On the other hand, order-forcing can be used to construct codes that have no local obstructions and yet are neither open-convex nor closed-convex~\cite{jeffs2020order}.  Going forward, we expect that all of these concepts -- wheels, local obstructions, rigid structures, and order-forcing, plus reducible and decomposable codes -- will aid in future classifications of neural codes, both in terms of open-convexity and closed-convexity.

\subsection*{Acknowledgements}
AR and AS thank Amzi Jeffs, Caitlin Lienkaemper, and Nora Youngs for helpful discussions.
AR also thanks Lienkaemper for help with enumerate simplicial complexes.  
The authors also thank Jeffs and Christian De Los Santos for helpful comments on an earlier draft. 
AS was supported by the NSF (DMS-1752672), and LFM was supported by a
Simons Collaboration Grant for Mathematicians.

\bibliographystyle{siam}
\bibliography{Bibliography.bib}

%
%\newpage

%-----------------
% APPENDICES
%-----------------
\appendix
%-----------------
% MAX 5 TO 7
%-----------------
\section{Codes on 6 neurons with 5 to 7 maximal codewords and unknown convexity status} \label{sec:appendix-max}
Recall from Table~\ref{tab:fewfacets} that among the minimal codes (with connected simplicial complex) on 6 neurons, many have unknown convexity status (that is, assessing whether such a code is convex can not be done automatically).
%More precisely, many codes are neither max-intersection-complete, nor have a combinatorial wheel, and so 
%assessing whether or not such a code is convex can not be done automatically. 
There are 
96 (respectively, 535 or 1169)
such codes with 5 (respectively, 6 or 7) maximal codewords;
we list 10 codes from this set in Section~\ref{sec:list-5-max} (respectively,~\ref{sec:list-6-max} or~\ref{sec:list-7-max}).
The complete list can be found at GitHub~\cite{alex-github}.

\subsection{Codes with 5 maximal codewords} \label{sec:list-5-max}

\begin{enumerate}

%[[1, 2], [1, 3], [1, 4, 5], [4, 5, 6], [2, 3, 4, 6], [4, 6], [3], [1], [2], [], [4, 5]]

    \item 
$\{\textbf{2346}, \textbf{145}, \textbf{456}, \textbf{12}, \textbf{13}, 45, 46, 1, 2, 3, \emptyset\}$

%[[1, 2], [1, 3], [2, 4, 5], [3, 4, 6], [1, 4, 5, 6], [4, 6], [3], [1], [2], [], [4, 5]]

    \item 
$\{\textbf{1456}, \textbf{245}, \textbf{346}, \textbf{12}, \textbf{13}, 45, 46, 1, 2, 3, \emptyset \}$

%[[1, 2], [1, 3], [2, 4, 5], [4, 5, 6], [3, 4, 6], [4, 6], [3], [1], [2], [], [4, 5]]

    \item 
$\{\textbf{245}, \textbf{346}, \textbf{456}, \textbf{12}, \textbf{13}, 45, 46, 1, 2, 3, \emptyset \}$

%[[1, 2], [3, 4], [1, 3, 5], [1, 5, 6], [2, 4, 5, 6], [4], [2], [3], [1], [5, 6], [1, 5], []]

    \item 
$\{\textbf{2456}, \textbf{135}, \textbf{156}, \textbf{12}, \textbf{34}, 15, 56, 1, 2, 3, 4, \emptyset \}$

%[[1, 2], [3, 4], [1, 5, 6], [2, 3, 5], [2, 4, 5, 6], [4], [2], [3], [1], [5, 6], [], [2, 5]]

    \item 
$\{\textbf{2456}, \textbf{156}, \textbf{235}, \textbf{12}, \textbf{34}, 25, 56, 1, 2, 3, 4, \emptyset \}$

%[[1, 2], [1, 3, 4], [1, 3, 5], [1, 4, 6], [2, 3, 5, 6], [1, 3], [2], [1], [6], [3, 5], [1, 4], []]

    \item 
$\{\textbf{2356}, \textbf{134}, \textbf{135}, \textbf{146}, \textbf{12}, 13, 14, 35, 1, 2, 6, \emptyset  \}$

%[[1, 2], [1, 3, 4], [1, 3, 5], [2, 4, 6], [2, 3, 5, 6], [4], [2], [1], [1, 3], [3, 5], [2, 6], []]

    \item 
$\{\textbf{2356}, \textbf{134}, \textbf{135}, \textbf{246}, \textbf{12}, 13, 26, 35, 1, 2, 4, \emptyset \}$

%[[1, 2], [1, 3, 4], [1, 3, 5], [2, 4, 6], [1, 4, 5, 6], [4, 6], [1], [1, 3], [2], [1, 5], [1, 4], []]

    \item 
$\{\textbf{1456}, \textbf{134}, \textbf{135}, \textbf{246}, \textbf{12}, 13, 14, 15, 46, 1, 2, \emptyset\}$

%[[1, 2], [1, 3, 4], [1, 3, 5], [2, 4, 6], [3, 4, 5, 6], [3], [], [3, 5], [3, 4], [4, 6], [1], [1, 3], [2]]

    \item 
$\{\textbf{3456}, \textbf{134}, \textbf{135}, \textbf{246}, \textbf{12}, 13, 34, 35, 46, 1, 2, 3, \emptyset \}$

%[[1, 2], [1, 3, 4], [1, 3, 5], [3, 4, 6], [2, 5, 6], [3, 4], [5], [1], [6], [2], [], [1, 3]]

    \item 
$\{\textbf{134}, \textbf{135}, \textbf{256}, \textbf{346}, \textbf{12}, 13, 34, 1, 2, 5, 6, \emptyset \}$
\end{enumerate}

\subsection{Codes with 6 maximal codewords} \label{sec:list-6-max}

\begin{enumerate}
%[[1, 2], [1, 3], [2, 3], [1, 4, 5], [2, 4, 6], [4, 5, 6], [4, 6], [3], [1], [2], [], [4, 5]]

    \item 
$\{\textbf{145}, \textbf{246}, \textbf{456}, \textbf{12}, \textbf{13}, \textbf{23}, 45, 46, 1, 2, 3, \emptyset \}$

%[[1, 2], [1, 3], [2, 3], [1, 4, 5], [2, 4, 6], [3, 4, 5, 6], [4, 6], [3], [1], [2], [], [4, 5]]

    \item 
$\{ \textbf{3456}, \textbf{145}, \textbf{246}, \textbf{12}, \textbf{13}, \textbf{23}, 45, 46, 1, 2, 3, \emptyset \}$

%[[1, 2], [1, 3], [1, 4], [2, 3, 5], [2, 5, 6], [4, 5, 6], [4], [2], [3], [1], [5, 6], [], [2, 5]]

    \item 
$\{\textbf{235}, \textbf{256}, \textbf{456}, \textbf{12}, \textbf{13}, \textbf{14}, 25, 56, 1, 2, 3, 4, \emptyset \}$

%[[1, 2], [1, 3], [1, 4], [1, 5, 6], [2, 3, 5], [2, 4, 5, 6], [4], [2], [3], [1], [5, 6], [], [2, 5]]

    \item 
$\{\textbf{2456}, \textbf{156}, \textbf{235}, \textbf{12}, \textbf{13}, \textbf{14}, 25, 56, 1, 2, 3, 4, \emptyset \}$

%[[1, 2], [1, 3], [1, 4], [1, 5, 6], [2, 3, 4, 5], [2, 3, 5, 6], [4], [2], [3], [1], [2, 3, 5], [5, 6], []]

    \item 
$\{\textbf{2345}, \textbf{2356}, \textbf{156}, \textbf{12}, \textbf{13}, \textbf{14}, 235, 56, 1, 2, 3, 4, \emptyset \}$

%[[1, 2], [1, 3], [2, 4], [2, 3, 5], [1, 5, 6], [3, 4, 5, 6], [4], [2], [3], [1], [5, 6], [], [3, 5]]

    \item 
$\{\textbf{3456}, \textbf{156}, \textbf{235}, \textbf{12}, \textbf{13}, \textbf{24}, 35, 56, 1, 2, 3, 4, \emptyset\}$

%[[1, 2], [1, 3], [1, 4], [2, 5, 6], [1, 5, 6], [2, 3, 4, 5], [4], [2], [3], [1], [5, 6], [], [2, 5]]

    \item 
$\{\textbf{2345}, \textbf{156}, \textbf{256}, \textbf{12}, \textbf{13}, \textbf{14}, 25, 56, 1, 2, 3, 4, \emptyset \}$

%[[1, 2], [1, 3], [2, 4], [2, 3, 5], [2, 5, 6], [4, 5, 6], [4], [2], [3], [1], [5, 6], [], [2, 5]]
 
    \item 
$\{\textbf{235}, \textbf{256}, \textbf{456}, \textbf{12}, \textbf{13}, \textbf{24}, 25, 56, 1, 2, 3, 4, \emptyset \}$ 
%[[1, 2], [1, 3], [2, 4], [2, 3, 5], [2, 5, 6], [1, 4, 5, 6], [4], [2], [3], [1], [5, 6], [], [2, 5]]

    \item 
$\{\textbf{1456}, \textbf{235}, \textbf{256}, \textbf{12}, \textbf{13}, \textbf{24}, 25, 56, 1, 2, 3, 4, \emptyset \}$

%[[1, 2], [1, 3], [2, 4], [2, 3, 5], [3, 5, 6], [4, 5, 6], [4], [2], [3], [1], [5, 6], [], [3, 5]]

    \item 
$\{\textbf{235}, \textbf{356}, \textbf{456}, \textbf{12}, \textbf{13}, \textbf{24}, 35, 56, 1, 2, 3, 4, \emptyset \}$

\end{enumerate}

\subsection{Codes with 7 maximal codewords} \label{sec:list-7-max}

%this is not the complete list; CoCalc (the platform I was running this on) didn't want to output the whole list 

\begin{enumerate}

%[[1, 2], [1, 3], [2, 3], [4, 5], [1, 4, 6], [2, 4, 6], [1, 5, 6], [5], [3], [], [4], [4, 6], [1, 6], [1], [2]]

\item 
$\{\textbf{146}, \textbf{156}, \textbf{246}, \textbf{12}, \textbf{13}, \textbf{23}, \textbf{45}, 16, 46, 1, 2, 3, 4, 5, \emptyset \}$

%[[1, 2], [1, 3], [2, 3], [1, 4, 5], [2, 4, 5], [1, 4, 6], [3, 4, 6], [4, 6], [3], [1], [2], [1, 4], [], [4, 5]]

\item 
$\{\textbf{145}, \textbf{146}, \textbf{245}, \textbf{346}, \textbf{12}, \textbf{13}, \textbf{23}, 14, 45, 46, 1, 2, 3, \emptyset \}$

%[[1, 2], [1, 3], [2, 3], [1, 4, 5], [2, 4, 5], [1, 4, 6], [1, 5, 6], [3], [1, 5], [1, 4], [], [1, 6], [1], [2], [4, 5]]

\item 
$\{\textbf{145}, \textbf{146}, \textbf{156}, \textbf{245}, \textbf{12}, \textbf{13}, \textbf{23}, 14, 15, 16, 45, 1, 2, 3, \emptyset \}$

%[[1, 2], [1, 3], [2, 3], [1, 4, 5], [2, 4, 5], [1, 4, 6], [2, 5, 6], [3], [6], [1, 4], [], [2, 5], [1], [2], [4, 5]]

\item 
$\{\textbf{145}, \textbf{146}, \textbf{245}, \textbf{256}, \textbf{12}, \textbf{13}, \textbf{23}, 14, 25, 45, 1, 2, 3, 6, \emptyset \}$

%[[1, 2], [1, 3], [2, 3], [1, 4, 5], [2, 4, 5], [3, 4, 6], [1, 5, 6], [3], [6], [1, 5], [], [4], [1], [2], [4, 5]]

\item 
$\{\textbf{145}, \textbf{156}, \textbf{245}, \textbf{346}, \textbf{12}, \textbf{13}, \textbf{23}, 15, 45, 1, 2, 3, 4, 6, \emptyset \}$

%[[1, 2], [1, 3], [2, 3], [1, 4, 5], [2, 4, 6], [1, 5, 6], [4, 5, 6], [5], [3], [5, 6], [1, 5], [], [4, 6], [1], [2], [4, 5]]

\item 
$\{\textbf{145}, \textbf{156}, \textbf{246}, \textbf{456}, \textbf{12}, \textbf{13}, \textbf{23}, 15, 45, 46, 56, 1, 2, 3, 5, \emptyset \}$

%[[1, 2], [1, 3], [2, 3], [1, 4, 5], [2, 4, 6], [1, 5, 6], [3, 4, 5, 6], [5], [3], [5, 6], [1, 5], [], [4, 6], [1], [2], [4, 5]]

\item 
$\{\textbf{3456}, \textbf{145}, \textbf{156}, \textbf{246}, \textbf{12}, \textbf{13}, \textbf{23}, 15, 45, 46, 56, 1, 2, 3, 5, \emptyset  \}$

%[[1, 2], [1, 3], [2, 3], [1, 4, 5], [4, 5, 6], [2, 4, 5], [3, 4, 6], [4, 6], [3], [1], [2], [], [4, 5]]

\item 
$\{\textbf{145}, \textbf{245}, \textbf{346}, \textbf{456}, \textbf{12}, \textbf{13}, \textbf{23}, 45, 46, 1, 2, 3, \emptyset \}$

%[[1, 2], [1, 3], [2, 3], [1, 4, 5], [4, 5, 6], [2, 4, 6], [3, 5, 6], [2], [4, 6], [3], [1], [5, 6], [], [4, 5]]

\item 
$\{\textbf{145}, \textbf{246}, \textbf{356}, \textbf{456}, \textbf{12}, \textbf{13}, \textbf{23}, 45, 46, 56, 1, 2, 3, \emptyset \}$

%[[1, 2], [1, 3], [1, 4], [2, 3], [2, 4, 5], [1, 5, 6], [3, 4, 5, 6], [4], [2], [3], [1], [5, 6], [], [4, 5]]

\item 
$\{\textbf{3456}, \textbf{156}, \textbf{245}, \textbf{12}, \textbf{13}, \textbf{14}, \textbf{23}, 45, 56, 1, 2, 3, 4, \emptyset \}$

\end{enumerate}

\section{Pure codes on 6 neurons with unknown convexity status} \label{sec:appendix-pure}
Recall that Table~\ref{tab:pure-codes} classifies minimal $6$-neuron codes for which the neural complex is connected and pure.  Among such codes that are pure of dimension $2$ (respectively, dimension $3$), 
there are 1004 (respectively, 76) with unknown convexity status.  Partial lists of these codes are appear in Sections~\ref{sec:dim-2} and~\ref{sec:dim-3}, and the complete lists are posted to GitHub~\cite{alex-github}.

\subsection{Codes of dimension $2$} \label{sec:dim-2}
Among the 1004 ``unknown'' minimal $6$-neuron codes that are pure of dimension 2, 
the number of maximal codewords is between $5$ and $14$.  
Below we list one such code for each possible number of maximal codewords (from $5$ to $14$). 
%only unknown pure codes on six neurons and two dimensions have between five and fourteen facets; select one code from each category of number of facets

%this is not the full list; same issue as with seven facets

\begin{enumerate}

%five facets
%[[1, 2, 3], [1, 2, 4], [1, 3, 5], [1, 2, 6], [4, 5, 6], [1, 3], [4], [5], [6], [1, 2], []]

    \item
    $\{\textbf{123}, \textbf{124}, \textbf{126}, \textbf{135}, \textbf{456}, 12, 13, 4, 5, 6, \emptyset \}$

%six facets
%[[1, 2, 3], [1, 2, 4], [1, 5, 6], [1, 3, 5], [2, 4, 5], [2, 4, 6], [2, 4], [1, 3], [5], [6], [1, 2], [1, 5], []]
    \item
    $\{\textbf{123}, \textbf{124}, \textbf{135}, \textbf{156}, \textbf{245}, \textbf{246}, 12, 13, 15, 24, 5, 6, \emptyset\}$
    
%seven facets
%[[1, 2, 3], [4, 5, 6], [1, 2, 4], [3, 5, 6], [1, 3, 5], [1, 4, 5], [2, 3, 6], [5], [3], [3, 6], [1, 2], [1, 5], [1, 4], [3, 5], [2, 3], [5, 6], [1], [1, 3], [4, 5], []]
    \item
    $\{\textbf{123}, \textbf{124}, \textbf{135}, \textbf{145}, \textbf{236}, \textbf{356}, \textbf{456}, 12, 13, 14, 15, 23, 35, 36, 45, 56, 1, 3, 5, \emptyset \}$
    
%eight facets
%[[1, 2, 3], [1, 4, 5], [2, 3, 4], [1, 2, 4], [1, 2, 6], [1, 5, 6], [3, 5, 6], [4, 5, 6], [2, 4], [5], [3], [5, 6], [1, 5], [1, 4], [], [2, 3], [1, 6], [1, 2], [1], [2], [4, 5]]
    \item
    $\{\textbf{123}, \textbf{124}, \textbf{126}, \textbf{145}, \textbf{156}, \textbf{234}, \textbf{356}, \textbf{456}, 12, 14, 15, 16, 23, 24, 45, 56, 1, 2, 3, 5, \emptyset \}$

%nine facets
%[[1, 2, 3], [1, 4, 5], [2, 3, 4], [1, 2, 4], [1, 2, 6], [2, 5, 6], [1, 3, 6], [1, 3, 4], [3, 4, 5], [2, 4], [5], [3], [1, 2], [2, 6], [1, 4], [], [3, 4], [4], [2, 3], [1, 6], [1], [1, 3], [2], [4, 5]]
    \item
    $\{\textbf{123}, \textbf{124}, \textbf{126}, \textbf{134}, \textbf{136}, \textbf{145}, \textbf{234}, \textbf{256}, \textbf{345}, 12, 13, 14, 16, 23, 24, 26, 34, 45, 1, 2, 3, 4, 5, \emptyset \}$
    
%ten facets
%[[1, 2, 3], [1, 2, 4], [1, 5, 6], [1, 3, 5], [1, 2, 5], [2, 4, 5], [1, 2, 6], [1, 3, 4], [3, 4, 6], [3, 5, 6], [2, 4], [5], [3], [3, 6], [1, 2], [1, 5], [1, 4], [3, 5], [3, 4], [1, 6], [5, 6], [1, 3], [1], [2], [2, 5], []]
    \item
    $\{\textbf{123}, \textbf{124}, \textbf{125}, \textbf{126}, \textbf{134}, \textbf{135}, \textbf{156}, \textbf{245}, \textbf{346}, \textbf{356}, 12, 13, 14, 15, 16, 24, 25, 34, 35, 36, 56, 1,$
    
    $ 2, 3, 5, \emptyset \}$
    
%eleven facets
%[[1, 2, 3], [1, 2, 4], [1, 2, 5], [1, 2, 6], [1, 3, 4], [1, 5, 6], [1, 3, 5], [1, 4, 6], [2, 4, 5], [1, 3, 6], [3, 4, 6], [3, 6], [1, 2], [1, 5], [], [4], [1, 3], [2, 4], [3], [6], [1, 4], [3, 4], [4, 6], [2, 5], [1], [2], [1, 6]]
    \item
    $\{\textbf{123}, \textbf{124}, \textbf{125}, \textbf{126}, \textbf{134}, \textbf{135}, \textbf{136}, \textbf{146}, \textbf{156}, \textbf{245}, \textbf{346}, 12, 13, 14, 15, 16, 24, 25, 34, 36, 46,$
    
    $1, 2, 3, 4, 6, \emptyset \}$
    
%twelve facets
%[[1, 2, 3], [1, 2, 4], [1, 5, 6], [2, 3, 5], [2, 4, 6], [3, 4, 5], [1, 4, 5], [1, 2, 5], [2, 5, 6], [2, 3, 6], [3, 5, 6], [1, 3, 5], [3, 6], [5, 6], [1, 5], [1], [], [1, 3], [2, 4], [5], [3], [6], [1, 4], [3, 5], [2, 3], [2, 5], [1, 2], [2], [2, 6], [4, 5]]
    \item
    $\{\textbf{123}, \textbf{124}, \textbf{125}, \textbf{135}, \textbf{145}, \textbf{156}, \textbf{235}, \textbf{236}, \textbf{246}, \textbf{256}, \textbf{345}, \textbf{356}, 12, 13, 14, 15, 23, 24, 25, 26, 35,$
    
    $36, 45, 56, 1, 2, 3, 5, 6, \emptyset\}$
    
%thirteen facets
%[[1, 2, 3], [1, 2, 4], [1, 3, 5], [1, 2, 6], [1, 4, 5], [3, 4, 5], [2, 3, 4], [3, 4, 6], [1, 3, 4], [1, 3, 6], [3, 5, 6], [2, 3, 5], [2, 4, 5], [3, 6], [1, 2], [1, 5], [], [2, 5], [4], [1, 3], [2, 4], [5], [3], [1, 4], [3, 5], [3, 4], [2, 3], [1, 6], [1], [2], [4, 5]]
    \item
    $\{\textbf{123}, \textbf{124}, \textbf{126}, \textbf{134}, \textbf{135}, \textbf{136}, \textbf{145}, \textbf{234}, \textbf{235}, \textbf{245}, \textbf{345}, \textbf{346}, \textbf{356}, 12, 13, 14, 15, 16, 23, 24,
    $
    
    $25, 34, 35, 36, 45, 1, 2, 3, 4, 5, \emptyset\}$
    
%fourteen facets
%[[1, 2, 3], [1, 2, 4], [1, 3, 5], [1, 2, 6], [2, 4, 5], [2, 3, 5], [2, 5, 6], [1, 4, 6], [3, 4, 5], [1, 5, 6], [1, 2, 5], [2, 3, 6], [1, 3, 6], [3, 5, 6], [3, 6], [5, 6], [1, 5], [1], [], [2, 5], [1, 3], [2, 4], [5], [3], [6], [1, 4], [3, 5], [2, 3], [1, 6], [1, 2], [2], [2, 6], [4, 5]]
    \item
    $\{\textbf{123}, \textbf{124}, \textbf{125}, \textbf{126}, \textbf{135}, \textbf{136}, \textbf{146}, \textbf{156}, \textbf{235}, \textbf{236}, \textbf{245}, \textbf{256}, \textbf{345}, \textbf{356}, 12, 13, 14, 15, 16, 23,$
    
    $24, 25, 26, 35, 36, 45, 56, 1, 2, 3, 5, 6, \emptyset \}$

\end{enumerate}
\subsection{Codes of dimension $3$} \label{sec:dim-3}
\begin{enumerate}

%[[1, 2, 3, 4], [1, 2, 5, 6], [1, 2, 3, 5], [1, 2, 4, 6], [1, 3, 4, 5], [1, 3, 5], [1, 2], [1, 2, 5], [1, 2, 4], [1, 3, 4], [1, 3], [1, 2, 6], [1, 2, 3], []]

    \item 
$\{\textbf{1234}, \textbf{1235}, \textbf{1246}, \textbf{1256}, \textbf{1345}, 123, 124, 125, 126, 134, 135, 12, 13, \emptyset \}$

%[[1, 2, 3, 4], [1, 2, 5, 6], [1, 3, 4, 5], [1, 2, 3, 5], [1, 2, 3, 6], [1, 3, 4], [1, 3, 5], [1, 2, 6], [1, 3], [1, 2], [1, 2, 3], [1, 2, 5], []]

    \item 
$\{\textbf{1234}, \textbf{1235}, \textbf{1236}, \textbf{1256}, \textbf{1345}, 123, 125, 126, 134, 135, 12, 13, \emptyset \}$

%[[1, 2, 3, 4], [1, 2, 5, 6], [1, 3, 4, 5], [1, 2, 3, 6], [2, 3, 4, 5], [5], [3, 4, 5], [2], [1, 5], [], [2, 3, 4], [3, 4], [1, 3, 4], [2, 5], [1], [1, 2, 6], [1, 2, 3]]

    \item 
$\{\textbf{1234}, \textbf{1236}, \textbf{1256}, \textbf{1345}, \textbf{2345}, 123, 126, 134, 234, 345, 15, 25, 34, 1, 2, 5, \emptyset \}$

%[[1, 2, 3, 4], [1, 2, 5, 6], [1, 3, 4, 5], [1, 2, 3, 6], [2, 3, 4, 6], [2, 3, 6], [1, 5], [2, 3, 4], [1, 3, 4], [2, 3], [1], [1, 2, 6], [1, 2, 3], []]

    \item 
$\{\textbf{1234}, \textbf{1236}, \textbf{1256}, \textbf{1345}, \textbf{2346}, 123, 126, 134, 234, 236,
15, 23, 1, \emptyset\}$ 

%[[1, 2, 3, 4], [1, 2, 5, 6], [1, 3, 4, 5], [2, 3, 5, 6], [1, 2, 3, 5], [1, 3, 5], [2, 5, 6], [2, 3, 5], [1, 2, 5], [1, 3, 4], [2, 5], [1, 3], [1, 2, 3], []]

    \item 
$\{\textbf{1234}, \textbf{1235}, \textbf{1256}, \textbf{1345}, \textbf{2356}, 123, 125, 134, 135, 235, 256, 13, 25, \emptyset \}$

%[[1, 2, 3, 4], [1, 2, 3, 5], [1, 2, 3, 6], [1, 2, 4, 5], [1, 4, 5, 6], [2, 3, 4, 5], [2, 4], [2], [2, 3, 5], [1, 2], [2, 3, 4], [1, 2, 5], [1, 2, 4], [1, 4, 5], [2, 3], [2, 5], [1], [2, 4, 5], [1, 2, 3], [1, 6], []]
 
    \item
$\{\textbf{1234}, \textbf{1235}, \textbf{1236}, \textbf{1245}, \textbf{1456}, \textbf{2345}, 123, 124, 125, 145, 234, 235, 245, 12, 16, 23, 24, 25, 1, 
$

$2, \emptyset \}$

%[[1, 2, 3, 4], [1, 2, 5, 6], [1, 2, 3, 5], [1, 2, 4, 6], [1, 3, 4, 5], [3, 4, 5, 6], [1, 3, 5], [3, 4, 5], [6], [1, 2], [], [1, 3, 4], [1, 2, 5], [1, 2, 4], [4], [5], [4, 6], [5, 6], [1, 3], [1, 2, 6], [1, 2, 3]]
 
    \item
$\{\textbf{1234}, \textbf{1235}, \textbf{1246}, \textbf{1256}, \textbf{1345}, \textbf{3456}, 123, 124, 125, 126, 134, 135, 345, 12, 13, 46, 56, 4, 5, 6,$

$\emptyset\}$    
%[[1, 2, 3, 4], [1, 2, 5, 6], [1, 2, 3, 5], [1, 2, 4, 6], [1, 3, 4, 5], [2, 3, 4, 5], [1, 3, 5], [3], [3, 4, 5], [2, 3, 5], [1, 2], [3, 5], [2, 3, 4], [1, 2, 5], [1, 2, 4], [3, 4], [1, 3, 4], [2, 3], [1, 3], [1, 2, 6], [1, 2, 3], []]
 
    \item
$\{\textbf{1234}, \textbf{1235}, \textbf{1246}, \textbf{1256}, \textbf{1345}, \textbf{2345}, 123, 124, 125, 126, 134, 135, 234, 235, 345, 12, 13, 23, $

$34, 35, 3, \emptyset\}$

%[[1, 2, 3, 4], [1, 2, 5, 6], [1, 2, 3, 5], [1, 2, 4, 6], [1, 3, 4, 5], [2, 4, 5, 6], [1, 3, 5], [2, 5, 6], [1, 2], [2, 6], [], [1, 3, 4], [1, 2, 5], [1, 2, 4], [2, 4, 6], [4], [5], [1, 3], [1, 2, 6], [1, 2, 3], [4, 5]]
 
    \item
$\{\textbf{1234}, \textbf{1235}, \textbf{1246}, \textbf{1256}, \textbf{1345}, \textbf{2456}, 123, 124, 125, 126, 134, 135, 246, 256, 12, 13, 26, 45, 4, $

$5,  \emptyset\}$    

%[[1, 2, 3, 4], [1, 2, 5, 6], [1, 3, 4, 5], [1, 2, 3, 5], [1, 2, 3, 6], [1, 2, 4, 6], [1, 3, 5], [1, 2], [1, 2, 5], [1, 2, 4], [1, 3, 4], [1, 3], [1, 2, 6], [1, 2, 3], []]

    \item
$\{\textbf{1234}, \textbf{1235}, \textbf{1236}, \textbf{1246}, \textbf{1256}, \textbf{1345}, 123, 124, 125, 126, 134, 135, 12, 13, \emptyset\}$

 \end{enumerate}

\end{document}